\newcommand*{\E}{\mathrm{\mathbf{E}}}
\renewcommand{\theenumi}{\roman{enumi}}
\renewcommand{\labelenumi}{{\rm(\theenumi)}}
\newtheorem{theorem}{Theorem}[section]
\newtheorem{lemma}[theorem]{Lemma}
\newtheorem{corollary}[theorem]{Corollary}
\newtheorem{proposition}[theorem]{Proposition}
\def\nfrac#1#2{{\textstyle\frac{#1}{#2}}}
\def\dfrac#1#2{\lower0.15ex\hbox{\large$\frac{#1}{#2}$}}
\newcommand{\C}{\mathcal{W}}
\newcommand{\W}{\mathcal{F}}
\newcommand{\dvec}{\boldsymbol{d}}
\begin{document}

\makeatletter
\renewcommand{\theenumi}{\roman{enumi}}
\renewcommand{\labelenumi}{(\theenumi)}
\renewcommand{\theenumii}{\alph{enumii}}
\renewcommand{\labelenumii}{(\theenumii)}
\renewcommand{\p@enumii}{}
\renewcommand{\theenumiii}{\arabic{enumii}}
\renewcommand{\labelenumiii}{(\theenumiii)}
\renewcommand{\p@enumiii}{}
\makeatother

\setlength{\abovedisplayskip}{0.5\baselineskip}
\setlength{\belowdisplayskip}{0.5\baselineskip}
\setlength{\abovedisplayshortskip}{0.25\baselineskip}
\setlength{\belowdisplayshortskip}{0.25\baselineskip}
\setlength{\itemsep}{0pt}

\title{A polynomial bound on the mixing time of a Markov chain for 
sampling regular directed graphs}

\author{Catherine Greenhill}

\author{
Catherine Greenhill\\
\small School of Mathematics and Statistics\\[-0.5ex]
\small The University of New South Wales\\[-0.5ex]
\small Sydney NSW 2052, Australia\\[-0.5ex]
\small \tt csg@unsw.edu.au}

\date{}

\maketitle

\begin{abstract}
The switch chain is a well-known Markov chain for sampling
directed graphs with a given degree sequence.  
While not ergodic in general, we show that it is ergodic for
regular degree sequences.  We then prove that the switch chain is
rapidly mixing for regular directed graphs of degree $d$, where 
$d$ is any positive integer-valued function of the number of vertices.
We bound the mixing time by bounding the eigenvalues of the chain.
A new result is presented and applied to bound the smallest 
(most negative) eigenvalue.  This result is a modification
of a lemma by Diaconis and Stroock~\cite{DS91}, and by using it
we avoid working with a lazy chain.  A multicommodity flow argument is 
used to bound the second-largest eigenvalue of the chain. 
This argument is based on the
analysis of a related Markov chain for undirected regular graphs
by Cooper, Dyer and Greenhill~\cite{CDG}, but with significant
extension required.  
\end{abstract}

\section{Introduction}\label{s:intro}

Directed graphs are natural combinatorial objects which are used to model
systems in many areas including
biology (for example~\cite{protein,LC}), the social sciences (for 
example~\cite{RPW,JR})
and computer science (for example~\cite{HC,MS}).
In this paper we consider the problem of sampling directed graphs
with a given degree sequence.

For graph-theoretic terminology not introduced here, see~\cite{BJG}.
A directed graph (digraph) $G=(V,A)$ consists of a vertex set $V=V(G)$
and an arc set 
\[ A=A(G)\subseteq \{ (v,w)\in V\times V\mid v\neq w\}.\]
Note that digraphs as defined here are simple, which means that they
contain no loops and no multiple arcs.

The arc $(v,w)$ is drawn as an arrow from $v$ to $w$.  We refer
to $v$ as the \emph{tail} and $w$ as the \emph{head} of the arc.
For a vertex $v$, the \emph{out-degree} $d^+(v)$ of $v$ is the number of
arcs with tail $v$.  Similarly, the \emph{in-degree} $d^-(v)$ of $v$
is the number of arcs with head $v$.  For a positive integer $d$,
if $d^+(v)=d^-(v)=d$
for all vertices $v\in V$ then we say that the digraph $G$
        is \emph{$d$-regular} (or \emph{$d$-in, $d$-out}).

Let $d = d(n)\geq 1$ be a sequence of positive integers,
and let $\Omega_{n,d}$ be the set of all simple $d$-regular
digraphs on the vertex set $[n]=\{ 1,\ldots, n\}$.
The \emph{configuration model} of Bollob{\' a}s~\cite{bollobas}
(adapted for directed graphs)
gives an expected polynomial-time uniform sampling algorithm
for $\Omega_{n,d}$ when $d=O(\sqrt{\log n})$.  

There is a one-to-one correspondence between $\Omega_{n,d}$
and the set of all $d$-regular bipartite graphs on 
$\{ 1,2,\ldots, n\}\cup\{n+1,n+2,\ldots, 2n\}$ with no edges
in common with the perfect matching $\{ \{ j,n+j\} : j=1,\ldots, n\}$.
The probability that a $d$-regular bipartite graph on the given vertex 
bipartition has no edges in common with this perfect matching is
asymptotic to $e^{-d}$ whenever $d=o(n^{1/3})$,
by ~\cite[Theorem 4.6]{McKay84}.  This probability is 
polynomially small when $d=O(\log n)$.  McKay and Wormald's
algorithm~\cite{MW90} for sampling $d$-regular 
graphs runs in expected polynomial time for $d = O(n^{1/3})$,
and hence gives rise to an expected polynomial-time algorithm
for uniformly sampling elements of $\Omega_{n,d}$ when $d=O(\log n)$.

The set of all 1-regular digraphs is in one-to-one correspondence
with the set of all derangements of $[n]$,
and here the configuration model corresponds to repeatedly
sampling uniform
permutations of $n$ until one is obtained without fixed points.
The proportion of permutations which are derangements tends to $1/e$,
so this algorithm has linear expected running time.
Other algorithms for uniformly sampling derangements in 
linear expected time but an improved
constant have been proposed, for example~\cite{MPP}.

We know of no expected polynomial-time
uniform sampling algorithm for regular digraphs other than those
mentioned above.
Hence we turn our attention to the problem of obtaining approximately uniform 
samples from $\Omega_{n,d}$  using a Markov chain.  
(Some Markov chain definitions are given in Section~\ref{s:intro-MC};
for others, see~\cite{OH}.)

There is a very natural Markov chain for digraphs which has arisen in
many contexts,  which we will call the \emph{switch chain}.
A transition of the switch chain is performed by randomly choosing
two distinct arcs and exchanging their heads, if the two arcs are
non-incident and if the resulting digraph does not
contain any multiple arcs.
See Figure~\ref{f:chain} for a precise description of 
the transition procedure of the chain.
A transition of the switch chain is called 
\emph{switching along an alternating rectangle}
by Rao, Jana and Bandyopadhyay~\cite{RJB};  we will
simply call it a \emph{switch}.  Similar transformations
were used by Ryser~\cite{ryser} to study 0-1 matrices.  
Besag and Clifford~\cite{BC} defined a related chain for sampling 
0-1 matrices with given row and column sums, while
Diaconis and Sturmfels~\cite{DS} used a similar chain to sample 
contingency tables. 

Rao, Jana and Bandyopadhyay~\cite{RJB} showed that the switch chain
is not irreducible for general degree sequences.  (However, they
mention that degree sequences for which the switch chain is
not irreducible are ``rather rare''.)
For completeness, we prove in Lemma~\ref{connected} that
the switch chain is irreducible for regular digraphs. 
This also follows from the existence of the multicommodity flow
defined in Sections~\ref{s:flow},~\ref{s:2circuit}.

The switch chain is aperiodic for $d\geq 1$, 
as we prove in Lemma~\ref{aperiodic}.

In their empirical study of methods for generating directed graphs
with given degree sequences, Milo et al.~\cite{MKINA} wrote
that the switch chain ``works well but, as with many Markov chain methods, 
suffers because
in general we have no measure of how long we need to wait for it to
mix properly''.   
Our main result, Theorem~\ref{main}, 
partially answers this point by providing 
the first rigorous
polynomial bound on the mixing time of the switch chain, in the
special case of regular digraphs.  
\begin{theorem}
Let $\Omega_{n,d}$ be the set of all $d$-regular digraphs on
the vertex set $\{ 1,\ldots, n\}$, where $d=d(n)$ is any integer-valued
function which satisfies $1\leq d(n)\leq n-1$ for all $n\geq 4$.
Let $\tau(\epsilon)$ be the mixing time of the Markov chain
$\mathcal{M}$ with state space $\Omega_{n,d}$ and transition procedure
given by Figure~\ref{f:chain}, for $d\geq 1$. 
Then
\[ \tau(\epsilon) \leq 50\, d^{25}\, n^{9}\,
           \left( dn\log(dn) + \log(\epsilon^{-1})\right).
	   \]
\label{main}
\end{theorem}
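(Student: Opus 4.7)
The plan is to apply the standard spectral bound for a reversible, ergodic Markov chain,
\[
\tau(\epsilon) \;\leq\; \frac{1}{1-\lambda}\left(\log|\Omega_{n,d}| + \log(\epsilon^{-1})\right),
\]
where $\lambda=\max\{\lambda_2,\,-\lambda_{\min}\}$ is the second-largest absolute eigenvalue of $\mathcal{M}$. Ergodicity is guaranteed by Lemma~\ref{connected} (irreducibility) and Lemma~\ref{aperiodic} (aperiodicity), and the transition procedure of Figure~\ref{f:chain} is evidently symmetric, so the uniform distribution on $\Omega_{n,d}$ is stationary and the chain is reversible. The crude estimate $|\Omega_{n,d}|\leq\binom{n(n-1)}{dn}$ immediately yields $\log|\Omega_{n,d}|=O(dn\log(dn))$, which matches the shape of the bound claimed in Theorem~\ref{main}. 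The whole task therefore reduces to producing polynomial lower bounds on the two spectral gaps $1-\lambda_2$ and $1+\lambda_{\min}$.

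To control $\lambda_2$ I would construct a multicommodity flow on the transition graph, following the Sinclair framework. For each ordered pair $(X,Y)\in\Omega_{n,d}\times\Omega_{n,d}$ a unit of flow is routed from $X$ to $Y$ through canonical paths of switches, built from the symmetric difference $X\triangle Y$. Mimicking the template of Cooper, Dyer and Greenhill~\cite{CDG} for undirected regular graphs, I would decompose $X\triangle Y$ into a family of alternating directed closed walks and, for each walk, prescribe a fixed sequence of switches that replaces its $X$-arcs by its $Y$-arcs one alternating rectangle at a time. Randomisation over the ordering of the walks, the starting arc of each walk, and an auxiliary pairing would spread the load, so that every intermediate state $Z$ admits a low-cost ``encoding'' that recovers $(X,Y)$ from the pair $(X\triangle Y,Z)$ plus a small witness. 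The congestion bound then amounts to counting, for each transition $(Z,Z')$, the number of such encodings passing through it; provided this count grows only polynomially in $n$ and $d$, Sinclair's lemma delivers the desired gap.

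For $\lambda_{\min}$ I would avoid laziness (which costs a factor of two in the mixing time and is awkward to carry through the flow argument) and instead invoke the variant of the Diaconis--Stroock inequality announced in the abstract. The idea is to exhibit, for every state $Z$, an odd-length closed walk returning to $Z$ together with an estimate on its congestion, converting this into a bound $\lambda_{\min}\geq -1+\mathrm{poly}(n,d)^{-1}$. A natural candidate is the length-$3$ walk that performs a switch, reverses it, and then performs a second switch; the congestion estimate can reuse a simplified version of the flow built for $\lambda_2$.

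The principal obstacle will be the flow analysis, which is considerably harder in the directed setting than in the undirected one. A single switch exchanges only the heads of two arcs and is permitted only when the result is simple, so an alternating walk in $X\triangle Y$ must simultaneously respect the in-degree and out-degree constraints, and certain intermediate configurations along a canonical path may force the creation of a double arc and have to be repaired by ad hoc detours. Tracking the extra combinatorial data needed to decode $(X,Y)$ from an intermediate state $Z$, through all such exceptional cases, is what produces the factor $d^{25}n^{9}$ in the theorem; keeping this bookkeeping clean and verifying that no exceptional sub-case blows up the counting is the heart of the proof.
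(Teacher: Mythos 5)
Your overall architecture coincides with the paper's: Lemma~\ref{mix-eigvals} with $\log(1/\pi^\ast)\le dn\log(dn)$, a multicommodity flow in the style of~\cite{CDG} to bound $1-\lambda_1$, and the modified Diaconis--Stroock inequality (Lemma~\ref{smallest-eigval}) to bound $1+\lambda_{N-1}$ without lazifying the chain. However, as written the proposal has a genuine gap: the entire substance of the theorem is the congestion bound of Proposition~\ref{our-second-largest-eigval}, and you defer it with ``provided this count grows only polynomially \dots\ Sinclair's lemma delivers the desired gap.'' Everything that makes the directed case hard --- the failure of the undirected fact that $y_{0,0}\ne x_{0,1}$, which forces the classification of raw 2-circuits into normal, eccentric and triangle cases; the use of LaMar's useful neighbours and useful arcs to process triangles; the structure lemma for interesting arcs (Lemma~\ref{zoo}); the four-fold ambiguity in recovering $(G,G')$ from $(Z,Z')$, $L$ and $\psi$ (Lemma~\ref{notquiteunique}); and the three-switch repair of $Z$-valid encodings (Lemma~\ref{fix}) yielding $|\mathcal{L}(Z)|\le 25\,d^6 n^6\,|\Omega_{n,d}|$ --- is absent, so the stated exponents in $d^{25}n^{9}$ cannot be extracted from what you have. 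You correctly identify this bookkeeping as the heart of the proof, but identifying it is not doing it. Note also that the paper's flow is deterministic given the pairing: the flow from $G$ to $G'$ is split equally over the $|\Psi(G,G')|$ pairings, not randomised over orderings or starting arcs, and this precise choice is what makes the encoding and counting argument go through.

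A smaller but concrete error concerns $\lambda_{\min}$. The length-3 closed walk you describe (``performs a switch, reverses it, and then performs a second switch'') is not a closed walk: after the third step you are at a state other than $Z$. More importantly, no such construction is needed. The transition procedure rejects incident arc pairs, so every state has a self-loop with $P(Z,Z)\ge \binom{dn}{2}^{-1}$ (Lemma~\ref{aperiodic}); taking each $\sigma_Z$ to be this 1-cycle, Corollary~\ref{congestion-corollary} gives $(1+\lambda_{N-1})^{-1}\le \tfrac{1}{2}\binom{dn}{2}\le \tfrac{1}{4}d^2n^2$ with no congestion estimate at all, which is exactly Lemma~\ref{our-smallest-eigval}.
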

Our proof of this result has two parts.  To avoid using a lazy chain
(which stays where it is at each step, with probability at least 
$\nfrac{1}{2}$)
we prove and apply a new result which can be used to bound the
smallest eigenvalue of an ergodic reversible Markov chain. 
This new bound is based on
Diaconis and Stroock~\cite[Proposition 2]{DS91} and inspired by
Sinclair~\cite[Theorem 5]{sinclair}.  To bound the second-largest
eigenvalue of the chain we adapt the multicommodity
flow analysis given in~\cite{CDG} for the undirected case. 
While some parts of the proof are very similar to~\cite{CDG},
significant extra technical difficulties arise
in the directed setting.  
We expect that the bound on the mixing time given in Theorem~\ref{main}
is far from tight,  but proving a substantially tighter bound 
seems beyond the reach of known proof techniques.

The \emph{flip chain} is a Markov chain which performs a restricted set
of switches, designed to ensure that the underlying digraph never becomes
disconnected.  
The flip chain for undirected graphs was described in~\cite{MS1},
and proposed as a self-randomizing mechanism for peer-to-peer networks.
The mixing time of the flip chain for regular undirected graphs
was analysed in~\cite{CDH,FGMS}, building on the multicommodity flow
analysis of the switch chain~\cite{CDG}.
We expect that Theorem~\ref{main} can be used to show that the flip chain
for digraphs is rapidly mixing for regular degree sequences.
This result would be of interest since
many protocols for communications networks (such as peer-to-peer networks)
use directed communications (see for example~\cite{HC,MS}).

The structure of the rest of paper is as follows.  
The necessary Markov chain definitions are given in the next subsection,
together with the new result (Lemma~\ref{smallest-eigval})
for bounding the smallest eigenvalue of an ergodic, reversible Markov chain.  
In Section~\ref{s:markov} we define the switch chain $\mathcal{M}$ and
prove that it is ergodic on $\Omega_{n,d}$ for $d\geq 1$.
A bound on the smallest eigenvalue of the chain is given in
Lemma~\ref{our-smallest-eigval}, and a bound on the second-largest
eigenvalue is stated in 
Proposition~\ref{our-second-largest-eigval}.
To conclude Section~\ref{s:markov}, we show how Theorem~\ref{main}
follows from Proposition~\ref{our-second-largest-eigval}, and
give an overview of the main steps of the multicommodity flow
argument which is used to prove 
Proposition~\ref{our-second-largest-eigval}.   This argument is
presented in Sections~\ref{s:flow}--~\ref{s:analysis}.
Finally, a worked example is given in Section~\ref{a:example}
which illustrates several features of the multicommodity flow
construction.

Before we begin our analysis, we mention some recent related work.
In many practical situations, almost uniformly random samples are required
in order to estimate the average value of some observable of
a system.  Kim et al.~\cite{KDBT} describe an alternative approach
to this problem in the case of sampling directed graphs with given
in-degrees and out-degrees.  Let $\dvec^+$, $\dvec^-$ be two vectors of
nonnegative integers with a common sum.   Denote by 
$\Omega_{n,\dvec^+,\dvec^-}$ the set of all digraphs on the vertex set 
$[n]$ with in-degree sequence $\dvec^+$ and 
out-degree sequence $\dvec^-$ (and assume that this set is nonempty). 
Kim et al.\ describe
an algorithm which runs in time $O(n^3)$ and produces a
random element of $\Omega_{n,\dvec^+,\dvec^-}$, drawn from
a specific non-uniform distribution. 
The samples output by the algorithm are statistically independent,
and the algorithm can calculate the weight of each digraph that
it produces.  They then explain how combining their algorithm
with biased sampling allows the average value of any function
on $\Omega_{n,\dvec^+,\dvec^-}$ to be approximated.  However, they
do not analyse the running time of the biased sampling procedure,
which could be very inefficient when the output distribution is far
from uniform.  (Indeed, in~\cite[Section 4.1]{KDBT} they assume
that the number of samples in the biased sampling is some positive
integer multiple of $|\Omega_{n,\dvec^+,\dvec^-}|$, which is usually
exponentially large.)

We complete this section with a final remark.
Milo et al.~\cite{MKINA} wrote of the
switch chain for directed graphs that ``Theoretical bounds on the mixing 
time exist only for specific near-regular degree sequences'', citing
Kannan, Tetali and Vempala~\cite{KTV}.
However, this is not correct, as we now explain.  
Two Markov chains are considered in~\cite{KTV}.
The first is an analogue of the switch chain for undirected graphs.
A bound on the mixing time is given in~\cite{KTV} for near-regular bipartite
undirected graphs, but no conclusion can be drawn from this for directed
graphs.  The second chain analysed in~\cite{KTV} is a Markov chain for
tournaments with a given score sequence.  
(A \emph{tournament} is a digraph obtained by giving an orientation to
each edge in an (undirected) complete graph.  Its \emph{score sequence}
is the sequence of out-degrees.)   Each transition of the Markov chain
reverses the arcs of a directed 3-cycle, so it is quite different from the
switch chain.  Furthermore, tournaments are very special kinds of digraphs.
We  know of no rigorous
polynomial bound on the mixing time of the switch chain for digraphs,
other than Theorem~\ref{main}.

\medskip

\noindent \emph{Acknowledgements.}\ 
I am grateful to Brendan McKay for his suggestion that 
it seemed unnecessary to make the switch chain lazy, which led
to the approach taken here.
I am also grateful to the anonymous referee for their helpful comments,
which improved both the content and the structure of this paper.

\subsection{Markov chain definitions and a new bound on the smallest eigenvalue}
\label{s:intro-MC}

Let $\mathcal{M}$ be an ergodic, time-reversible
Markov chain on the finite state space $\Omega$
with transition matrix $P$ and stationary distribution $\pi$.
The \emph{total variation distance}
between two probability distributions $\sigma,\,\sigma'$ on $\Omega$ is
given by
\[ d_{\mathrm{TV}}(\sigma,\sigma') = \tfrac{1}{2} \sum_{x\in \Omega}
          |\sigma(x) - \sigma'(x)|.\]
The \emph{mixing time} $\tau(\varepsilon)$ is defined by
\[ \tau(\varepsilon) = \mathrm{max}_{x\in \Omega}\,
          \mathrm{min}\left\{T\geq 0 \mid  d_{\mathrm{TV}}(P^t_x,\pi)
        \leq \varepsilon \mbox{ for all } t\geq T\right\},\]
where $P_x^t$ is the distribution  of the state $X_t$ of the Markov
chain after $t$ steps from the initial state $X_0=x$.
Let $\pi^\ast = \min\{ \pi(x) \mid x\in\Omega\}$ be the minimum
stationary probability.

The transition matrix $P$ has real eigenvalues
\[ 1 = \lambda_0 > \lambda_1 \geq \lambda_2 \geq \cdots
          \geq \lambda_{N-1} \geq -1,\]
where $N=|\Omega|$, and the Markov chain is aperiodic if
and only if $\lambda_{N-1} > -1$.  Let
\begin{equation}
\label{eigenvalues}
 \lambda_*= \max\{ \lambda_1, \, |\lambda_{N-1}|\}
\end{equation}
be the second-largest eigenvalue in absolute value.
The following result follows from Sinclair~\cite[Proposition 1]{sinclair},
which is based on a result of
Diaconis and Stroock~\cite[Proposition 3]{DS91}.

\begin{lemma} \emph{(\cite[Proposition 1]{sinclair})}
The mixing time of the Markov chain $\mathcal{M}$ satisfies
\[ \tau(\varepsilon) \leq (1-\lambda_*)^{-1}
                \left( \log(1/\pi^\ast) + \log(\varepsilon^{-1})\right).\]
\label{mix-eigvals}
\end{lemma}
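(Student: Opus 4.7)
The plan is to follow the standard spectral-gap argument. Since $\mathcal{M}$ is reversible with respect to $\pi$, the transition matrix $P$ is self-adjoint with respect to the inner product $\langle f, g\rangle_\pi = \sum_{x\in\Omega} \pi(x) f(x) g(x)$, so $P$ admits a real orthonormal basis of eigenvectors $v_0, v_1, \ldots, v_{N-1}$ with $v_0\equiv 1$ and corresponding eigenvalues $1=\lambda_0 > \lambda_1 \geq \cdots \geq \lambda_{N-1} \geq -1$. The first step is to record the spectral expansion
\[
\frac{P^t(x,y)}{\pi(y)} \;=\; \sum_{i=0}^{N-1} \lambda_i^{\,t}\, v_i(x)\, v_i(y),
\]
so that $P^t(x,y)/\pi(y) - 1 = \sum_{i\geq 1} \lambda_i^{\,t}\, v_i(x)\, v_i(y)$.

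Next I would pass from $\ell^1$ to $\ell^2$ via Cauchy--Schwarz: using $\pi$ as the weighting and the definition of $\TV$,
\[
4\, \TV(P^t_x, \pi)^2 \;\leq\; \sum_{y\in\Omega} \pi(y)\!\left(\frac{P^t(x,y)}{\pi(y)} - 1\right)^{\!2}.
\]
Substituting the expansion and invoking orthonormality of the $v_i$ in $L^2(\pi)$, the right-hand side collapses to $\sum_{i\geq 1} \lambda_i^{\,2t}\, v_i(x)^2$. Since $|\lambda_i| \leq \lambda_*$ for $i\geq 1$ and since Parseval's identity applied to the indicator $\mathbf{1}_{\{x\}}/\pi(x)$ yields $\sum_{i} v_i(x)^2 = 1/\pi(x)$, this gives
\[
4\, \TV(P^t_x, \pi)^2 \;\leq\; \lambda_*^{\,2t}\,\frac{1}{\pi(x)} \;\leq\; \frac{\lambda_*^{\,2t}}{\pi^\ast}.
\]

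Finally I would solve for $t$. Taking square roots yields $\TV(P^t_x,\pi) \leq \tfrac{1}{2}\lambda_*^{\,t}/\sqrt{\pi^\ast}$, so demanding this be at most $\varepsilon$ and using the elementary inequality $\log(1/\lambda_*) \geq 1 - \lambda_*$ (valid for $\lambda_*\in(0,1)$, with the negative-$\lambda_*$ case absorbed into $|\lambda_{N-1}|$ via the definition of $\lambda_*$) gives
\[
t \;\geq\; \frac{1}{1-\lambda_*}\!\left(\tfrac{1}{2}\log(1/\pi^\ast) + \log(\varepsilon^{-1})\right),
\]
which is stronger than the claimed bound, and maximising over the starting state $x$ yields the lemma.

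The only subtle point is justifying that $\lambda_*<1$ so that the geometric decay occurs, which follows from ergodicity ($\lambda_1<1$) together with aperiodicity ($\lambda_{N-1}>-1$); both are part of the hypothesis that $\mathcal{M}$ is ergodic. There is no serious obstacle here — the argument is standard and the whole content lies in the spectral decomposition plus Cauchy--Schwarz, so I would present it essentially as above and refer to Sinclair~\cite{sinclair} for full details.
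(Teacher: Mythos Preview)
Your argument is correct and is precisely the standard spectral-decomposition-plus-Cauchy--Schwarz proof that underlies the cited result. Note, however, that the paper does not actually prove this lemma: it merely quotes it from Sinclair~\cite[Proposition 1]{sinclair} (which in turn rests on Diaconis and Stroock~\cite[Proposition 3]{DS91}). So there is nothing in the paper to compare against; you have simply supplied the well-known proof of the cited result, and done so accurately --- indeed, as you observe, the $\ell^2$ bound yields the slightly sharper constant $\tfrac12\log(1/\pi^\ast)$ in place of $\log(1/\pi^\ast)$.
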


It has become common practice when applying this bound to first
make the Markov chain lazy (that is, replace the transition matrix $P$
by $(I+P)/2$).  This ensures that all eigenvalues of the chain are nonnegative,
so that $\lambda_* = \lambda_1$ and only $(1-\lambda_1)^{-1}$ needs to be bounded.
However, we prefer not to use introduce unnecessary laziness and seek
an alternative approach. 

Diaconis and Stroock proved a result~\cite[Proposition 2]{DS91} which
provides an upper bound on $(1 + \lambda_{N-1})^{-1}$, where
$\lambda_{N-1}$ is the smallest eigenvalue of a Markov chain
as in (\ref{eigenvalues}).  In Lemma~\ref{smallest-eigval} below,
we give a new method for bounding on $\lambda_{N-1}$. 
The new bound is obtained by modifying~\cite[Proposition 2]{DS91}
in the same way that Sinclair modified~\cite[Proposition 1]{DS91}
to produce~\cite[Theorem 5]{sinclair}.
The modification results in a bound which is more local in character 
and seems easier to apply than~\cite[Proposition 2]{DS91}.
(See also the discussion in~\cite[Section 2]{sinclair}.)

To state the new bound we need some notation.
Write $\mathcal{G}$ for the underlying graph of the Markov chain
$\mathcal{M}$, where $\mathcal{G} = (\Omega, \Gamma)$
and each edge $e\in \Gamma$ corresponds to a transition of
$\mathcal{M}$.  That is, $e=\{ x,y\}$ is an edge of $\mathcal{G}$
if and only if $P(x,y)>0$.
Define $Q(e) = Q(x,y) = \pi(x)P(x,y)$ for the edge $e=\{x,y\}$.  
(If $P(x,x)>0$ then the edge $\{x,x\}$ is called a \emph{self-loop}
at $x$.)

For each $x\in\Omega$, fix a particular cycle 
from $x$ to $x$ in $\mathcal{G}$ with an odd number
of edges, and denote it by $\sigma_x$. 
(Such a cycle exists for each $x$, since the Markov chain
is aperiodic.)  Note that $\sigma_x$ may be a 1-cycle, which is
a walk along a self-loop edge at $x$.  Write $|\sigma_x|$ to denote
the length of the cycle $\sigma_x$, which is a positive odd number.
Let $\Sigma = \{ \sigma_x : x\in \Omega\}$ be the set of these odd cycles, 
and define the parameter
\[ \eta = \eta(\Sigma) = \operatorname{max}_{e\in\Gamma} \, \frac{1}{Q(e)}
\, \sum_{x\in\Omega,\,\, e\in\sigma_x} |\sigma_x| \pi(x).
\]

\begin{lemma} 
Suppose that $\mathcal{M}$ is a reversible, ergodic Markov chain with
state space $\Omega$.  Let $N=|\Omega|$ and let the 
eigenvalues of $\mathcal{M}$ be given by \emph{(\ref{eigenvalues})}.
Then
\[ (1 + \lambda_{N-1})^{-1} \leq  \frac{\eta}{2}.\]
\label{smallest-eigval}
\end{lemma}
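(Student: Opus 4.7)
The plan is to derive the bound via the Rayleigh--quotient characterisation of $\lambda_{N-1}$ in the ``$+$''-form. A short computation using reversibility (namely $\|f\|_\pi^2+\langle Pf,f\rangle_\pi = \tfrac{1}{2}\sum_{x,y}\pi(x)P(x,y)(f(x)+f(y))^2$) gives
\[ 1+\lambda_{N-1}\;=\;\min_{f\not\equiv 0}\frac{\mathcal{F}(f,f)}{\|f\|_\pi^2},\qquad \mathcal{F}(f,f):=\tfrac{1}{2}\!\!\sum_{x,y\in\Omega}\!\pi(x)P(x,y)\bigl(f(x)+f(y)\bigr)^2, \]
so the lemma reduces to proving $\mathcal{F}(f,f)\geq (2/\eta)\|f\|_\pi^2$ for every $f:\Omega\to\mathbb{R}$. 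This is the ``$+$''-analogue of the Dirichlet inequality governing $\lambda_1$, and my argument will mimic Sinclair's localisation \cite[Theorem~5]{sinclair} of \cite[Proposition~1]{DS91}, but with odd closed walks at each state replacing canonical paths between pairs of states.

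The engine is an alternating telescoping identity that crucially uses the \emph{odd} length of $\sigma_x=(x=x_0,x_1,\ldots,x_{|\sigma_x|}=x)$:
\[ 2f(x)\;=\;\sum_{i=0}^{|\sigma_x|-1}(-1)^i\bigl(f(x_i)+f(x_{i+1})\bigr). \]
(If $|\sigma_x|$ were even the right-hand side would telescope to $0$, and the argument would fail.) Cauchy--Schwarz then yields $4f(x)^2\leq|\sigma_x|\sum_i(f(x_i)+f(x_{i+1}))^2$. Multiplying by $\pi(x)$, summing over $x\in\Omega$, and swapping the order of summation moves the weight $|\sigma_x|\pi(x)$ onto the individual edges $e\in\sigma_x$; the definition of $\eta$ then converts this weight into $\eta Q(e)$, giving
\[ 4\|f\|_\pi^2\;\leq\;\!\!\sum_{e=\{u,v\}\in\Gamma}\!\!(f(u)+f(v))^2\!\!\sum_{x:\,e\in\sigma_x}\!\!|\sigma_x|\pi(x)\;\leq\;\eta\sum_{e\in\Gamma}Q(e)(f(u)+f(v))^2. \]

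To finish I will compare the right-hand side with $\mathcal{F}(f,f)$. Each non-loop edge $\{u,v\}$ contributes $Q(e)(f(u)+f(v))^2$ to both sums. Each self-loop at $u$ contributes $4Q(e)f(u)^2$ to the edge-sum above but only $2Q(e)f(u)^2$ to $\mathcal{F}(f,f)$; however, the non-loop contributions to $\mathcal{F}(f,f)$ are non-negative, and so $\sum_{e\in\Gamma}Q(e)(f(u)+f(v))^2\leq 2\mathcal{F}(f,f)$. Combining this with the previous display yields $2\|f\|_\pi^2\leq\eta\,\mathcal{F}(f,f)$, equivalently $(1+\lambda_{N-1})^{-1}\leq\eta/2$, as required.

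The telescoping identity and the Cauchy--Schwarz estimate are routine; the step I expect to require the most care is the final comparison, where the cycle-based edge sum and the symmetrised Dirichlet form treat self-loops with different weights. Reconciling these two conventions — and verifying that the stray factor of two is exactly what becomes the ``$/2$'' in the conclusion — is the bookkeeping subtlety that distinguishes the odd-cycle bound from its canonical-path relative for $\lambda_1$. Once that is pinned down, the structure of the proof is completely parallel to Sinclair's local variant of Diaconis--Stroock, which is what makes the lemma practical to apply later to the switch chain.
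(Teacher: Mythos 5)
Your proposal is correct and follows essentially the same route as the paper's proof: the alternating telescoping identity over the odd cycle $\sigma_x$, Cauchy--Schwarz, exchange of summation, and the definition of $\eta$, with the only cosmetic difference being that you invoke the variational characterisation $1+\lambda_{N-1}=\min_f \mathcal{F}(f,f)/\|f\|_\pi^2$ whereas the paper proves the inequality for arbitrary $\psi$ and then substitutes the eigenfunction $\psi_{N-1}$. Your explicit reconciliation of the self-loop weighting between the cycle-based edge sum and the form $\E_\pi(\psi^2)+\langle\psi,P\psi\rangle_\pi$ is handled correctly (and slightly more carefully than in the paper, which passes over this point with an equality sign).
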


\begin{proof}
The proof is very similar to the proof of~\cite[Proposition 2]{DS91},
but using a different application of the Cauchy-Schwarz inequality,
as in the proof of~\cite[Theorem 5]{sinclair}.  Assign an arbitrary
orientation to each cycle $\sigma_x$ and let $e = (e^-,e^+)$ under
this orientation.  Also define $\ell(e)$ to be the distance from $x$ to $e^-$
along the oriented cycle $\sigma_x$.  Then for any function
$\psi:\Omega\to\mathbb{R}$ we have
\[ \psi(x) = \dfrac{1}{2} \sum_{e\in\sigma_x} (-1)^{\ell(e)}\, 
           (\psi(e^+) + \psi(e^-))\]
for all $x\in\Omega$.
Given $\psi,\varphi:\Omega\to\mathbb{R}$, let
\[ 
\langle \psi, \varphi\rangle_\pi = \sum_{x\in\Omega} \psi(x)\varphi(x)\pi(x),\quad
\E_\pi(\psi) =  \sum_{x\in\Omega} \psi(x)\pi(x).
\]
Then for any nonzero function $\psi:\Omega\to\mathbb{R}$ we have
\begin{align*}
\E_\pi(\psi^2) = \sum_{x\in\Omega} \psi(x)^2\pi(x)
           &= \sum_{x\in\Omega} \pi(x)\,
 \left(\dfrac{1}{2}\, \sum_{e\in \sigma_x} 
              (-1)^{\ell(e)} (\psi(e^+) + \psi(e^-)) \right)^2\\
           &\leq \dfrac{1}{4} \sum_x \pi(x) |\sigma_x| \sum_{e\in\sigma_x}
                          (\psi(e^+) + \psi(e^-))^2,
                          \end{align*}
using the Cauchy-Schwarz inequality.  Exchanging the order of summation 
(and now orienting each edge $e\in\Gamma$ arbitrarily) gives
\begin{align*}
\E_\pi(\psi^2) &= \dfrac{1}{4} \sum_{e\in\Gamma} (\psi(e^+) + \psi(e^-))^2\,
                        \sum_{x\in\Omega,\, e\in\sigma_x}
                               |\sigma_x|\pi(x)\\
            &\leq \frac{\eta}{4}\, \sum_{e\in\Gamma} (\psi(e^+) + \psi(e^-))^2\, Q(e)\\
            &= \frac{\eta}{2}\, 
               \left(\E_\pi(\psi^2) + \langle \psi, P\psi\rangle_\pi\right).
\end{align*}
Divide through by $\E_\pi(\psi^2)$ to obtain
\[ 1\leq \frac{\eta}{2}\, \left(1 + \frac{\langle \psi, P\psi\rangle_\pi}
                                             {\E_\pi(\psi^2)}\right).\]
Now set $\psi$ equal to any eigenfunction $\psi_{N-1}$
corresponding to $\lambda_{N-1}$.  After rearranging this completes the proof,
since
\[ \langle \psi_{N-1}, P\psi_{N-1}\rangle_\pi  = 
   \lambda_{N-1}\, \langle \psi_{N-1}, \psi_{N-1}\rangle_\pi = 
             \lambda_{N-1}\, \E_\pi(\psi_{N-1}^2).\]
\end{proof}

This leads to an analogue of~\cite[Corollary 6]{sinclair}. 
We also prove a bound for a special case which often arises.

\begin{corollary}
\label{congestion-corollary}
Under the same conditions as Lemma~\emph{\ref{smallest-eigval}}\ we have
\[ (1 + \lambda_{N-1})^{-1} \leq \frac{\eta'(\Sigma) \, \ell(\Sigma) }{2}
\]
where
\[ \eta'(\Sigma) = \operatorname{max}_{e\in \Gamma} \frac{1}{Q(e)}
                \sum_{x\in\Omega,\, e\in\sigma_x} \pi(x), \qquad
  \ell(\Sigma) = \operatorname{max}_{x \in \Omega} |\sigma_x|.
  \]
In particular, if $\ell(\Sigma) = 1$ then
\[ (1+\lambda_{N-1})^{-1} \leq \dfrac{1}{2}\operatorname{max}_{x\in\Omega} 
P(x,x)^{-1}
\]
(where $P$ denotes the transition matrix of the Markov chain).
\end{corollary}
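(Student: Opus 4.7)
The plan is to obtain both bounds as essentially immediate consequences of Lemma~\ref{smallest-eigval}, by loosening the estimate on $\eta(\Sigma)$ in a way that decouples the cycle lengths from the congestion produced by each edge.

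For the first bound, I would replace each factor $|\sigma_x|$ appearing inside the inner sum in the definition of $\eta(\Sigma)$ by the uniform upper bound $\ell(\Sigma)$. Pulling this constant out of both the inner sum and the outer maximum over edges $e \in \Gamma$ yields $\eta(\Sigma) \leq \ell(\Sigma)\,\eta'(\Sigma)$, and substituting this into the conclusion of Lemma~\ref{smallest-eigval} gives
\[ (1 + \lambda_{N-1})^{-1} \leq \frac{\eta(\Sigma)}{2} \leq \frac{\eta'(\Sigma)\,\ell(\Sigma)}{2}, \]
which is the claimed inequality.

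For the special case $\ell(\Sigma) = 1$, every cycle $\sigma_x$ must be a 1-cycle at $x$, namely a traversal of the self-loop $e = \{x,x\}$. In particular the edge $\{x,x\}$ lies on $\sigma_y$ only when $y = x$, so for this edge the inner sum in $\eta'(\Sigma)$ collapses to the single term $\pi(x)$. Combined with $Q(\{x,x\}) = \pi(x)\,P(x,x)$ this gives
\[ \eta'(\Sigma) = \max_{x \in \Omega} \frac{\pi(x)}{\pi(x)\,P(x,x)} = \max_{x \in \Omega} P(x,x)^{-1}, \]
and substituting into the first inequality with $\ell(\Sigma) = 1$ yields the stated bound.

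I do not anticipate any genuine obstacle here; both parts are one-line unwindings of the relevant definitions. The only point requiring a small amount of care is the observation that, when $\Sigma$ consists entirely of self-loops, a given self-loop at $x$ belongs to no cycle of $\Sigma$ other than $\sigma_x$ itself, which is what causes $\eta'(\Sigma)$ to reduce to $\max_{x} P(x,x)^{-1}$ without a contribution from any other state.
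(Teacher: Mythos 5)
Your proposal is correct and follows essentially the same route as the paper: the first inequality is the immediate consequence of Lemma~\ref{smallest-eigval} obtained by bounding $|\sigma_x|\leq\ell(\Sigma)$, and the special case is handled by noting that a self-loop at $x$ lies only on $\sigma_x$ (the paper additionally remarks that non-self-loop edges contribute $0$ to the maximum, which your argument covers implicitly). No gaps.
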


\begin{proof}
The first statement follows immediately from Theorem~\ref{smallest-eigval}.
Now suppose that $\ell(\Sigma)=1$.   Then each 
$\sigma_x\in\Sigma$ is a self-loop.
If $e=(y,y)\in\Gamma$ is a self-loop at $y$ then $e$ is contained
in exactly one element of $\Sigma$, namely $\sigma_y$. 
In this case
\[ \frac{1}{Q(e)}\, \sum_{x,\,e\in \sigma_x}\, \pi(x) = 
   \frac{\pi(y)}{Q(y,y)} = P(y,y)^{-1}. \]
If $e\in\Gamma$ is not a self-loop
then $e$ is not contained in any element of $\Sigma$, and in
this case
\[ \frac{1}{Q(e)}\, \sum_{x,\, e\in \sigma_x} \, \pi(x) =  0.\]
Therefore $\eta'(\Sigma) = \operatorname{max}_{x\in\Omega} P(x,x)^{-1}$
and the second statement follows from the first.
\end{proof}

We will use the \emph{multicommodity flow}
method of Sinclair~\cite{sinclair} to bound the second
eigenvalue $\lambda_1$.
A \emph{flow} in $\mathcal{G}$ is a function 
$f:\mathcal{P}\to [0,\infty)$ which satisfies
\begin{equation}
\label{flow-condition}
 \sum_{p\in\mathcal{P}_{xy}} f(p) = \pi(x)\pi(y)
   \quad \mbox{ for all } x,y\in \Omega, \, x\neq y,
\end{equation}
where $\mathcal{P}_{xy}$ is the set of all simple directed paths from
$x$ to $y$ in $\mathcal{G}$ and $\mathcal{P}= \cup_{x\neq y}\mathcal{P}_{xy}$.
Extend $f$ to a function on oriented edges by setting
\[ f(e) = \sum_{p\ni e} f(p),\]
so that $f(e)$ is the total flow routed through $e$.  
Let $\ell(f)$
be the length of the longest path with $f(p)>0$, and let
\[ \rho(e) = f(e)/Q(e)\]
be the \emph{load} of the edge $e$.  The 
\emph{maximum load} of the flow is
\[ \rho(f) = \max_{e} \rho(e).\]
Sinclair~\cite[Corollary $6'$]{sinclair} proves the following.

\begin{lemma} \emph{(\cite[Corollary $6'$]{sinclair})}
For any reversible ergodic Markov chain $\mathcal{M}$ and any flow $f$,
the second eigenvalue $\lambda_1$ satisfies
\[ (1-\lambda_1)^{-1} \leq  \rho(f)\ell(f).\]
\label{second-eigval}
\end{lemma}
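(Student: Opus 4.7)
The plan is to bound $(1-\lambda_1)^{-1}$ via the variational (Rayleigh-quotient) characterization of the spectral gap of a reversible chain:
\[
1 - \lambda_1 = \min_{\psi}\, \frac{\mathcal{E}(\psi,\psi)}{\mathrm{Var}_\pi(\psi)},
\]
where the minimum runs over non-constant $\psi:\Omega\to\mathbb{R}$, the Dirichlet form is $\mathcal{E}(\psi,\psi) = \tfrac{1}{2}\sum_{x,y}(\psi(x)-\psi(y))^2 Q(x,y)$, and a standard identity gives $\mathrm{Var}_\pi(\psi) = \tfrac{1}{2}\sum_{x,y}(\psi(x)-\psi(y))^2 \pi(x)\pi(y)$. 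So the lemma reduces to proving that $\mathrm{Var}_\pi(\psi) \leq \rho(f)\ell(f)\,\mathcal{E}(\psi,\psi)$ for every $\psi$, and the flow $f$ is exactly the right device to transfer a sum indexed by pairs $(x,y)$ to a sum indexed by edges of $\mathcal{G}$.

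For the core estimate, I would fix $\psi$ and a pair $x\neq y$. Orient each path $p\in\mathcal{P}_{xy}$ from $x$ to $y$, and for each oriented edge $e=(e^-,e^+)$ on $p$ set $\Delta_e=\psi(e^+)-\psi(e^-)$, so that $\psi(y)-\psi(x) = \sum_{e\in p}\Delta_e$. Multiplying this telescoping identity by $f(p)$ and summing over $p\in\mathcal{P}_{xy}$, the flow condition~(\ref{flow-condition}) gives
\[
\pi(x)\pi(y)\,(\psi(y)-\psi(x)) \;=\; \sum_{p\in\mathcal{P}_{xy}} f(p)\sum_{e\in p}\Delta_e.
\]
Two Cauchy-Schwarz applications then do the work. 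First, for a single path $p$, $(\sum_{e\in p}\Delta_e)^2 \leq |p|\sum_{e\in p}\Delta_e^2 \leq \ell(f)\sum_{e\in p}\Delta_e^2$. Second, Cauchy-Schwarz on the weighted sum over $p$, combined with $\sum_{p\in\mathcal{P}_{xy}} f(p) = \pi(x)\pi(y)$, yields
\[
\pi(x)\pi(y)\,(\psi(y)-\psi(x))^2 \;\leq\; \ell(f)\sum_{p\in\mathcal{P}_{xy}} f(p)\sum_{e\in p}\Delta_e^2.
\]

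Finally I would sum this inequality over all ordered pairs $(x,y)$ with $x\neq y$ and exchange the order of summation so that the outer sum is over oriented edges $e$ of $\mathcal{G}$. For a fixed $e$ the coefficient of $\Delta_e^2$ becomes $\sum_{p\ni e} f(p) = f(e) \leq \rho(f)\,Q(e)$ by the definition of the maximum load. Assembling the pieces gives $2\,\mathrm{Var}_\pi(\psi) \leq \rho(f)\ell(f)\sum_{e}\Delta_e^2\,Q(e) = 2\rho(f)\ell(f)\,\mathcal{E}(\psi,\psi)$, which is the required bound. The only genuinely subtle point is the two-tier Cauchy-Schwarz: the path-length factor $\ell(f)$ emerges from the inner inequality across the edges of a single path, while the load factor $\rho(f)$ appears only after swapping the order of summation and applying the definition of $\rho(f)$ edge-by-edge; the rest is bookkeeping.
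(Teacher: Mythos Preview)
Your argument is correct and is essentially the standard proof of Sinclair's Corollary~$6'$. Note, however, that the paper does not prove this lemma at all: it is quoted verbatim from~\cite{sinclair} with the attribution ``(\cite[Corollary $6'$]{sinclair})'' and no proof is supplied. So there is nothing to compare against --- your write-up simply fills in a proof that the paper deliberately omits as a citation.

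One small simplification: the second Cauchy--Schwarz over paths is unnecessary. Since $\psi(y)-\psi(x)=\sum_{e\in p}\Delta_e$ holds for \emph{every} $p\in\mathcal{P}_{xy}$, the path-level inequality $(\psi(y)-\psi(x))^2\le \ell(f)\sum_{e\in p}\Delta_e^2$ already holds pathwise; multiplying by $f(p)$ and summing over $p$ immediately gives $\pi(x)\pi(y)(\psi(y)-\psi(x))^2\le \ell(f)\sum_{p}f(p)\sum_{e\in p}\Delta_e^2$ without a further application of Cauchy--Schwarz. Your two-step version is not wrong, just slightly longer than needed.
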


\section{The switch chain}\label{s:markov}

Let $d:\{ 4,5,\ldots\}\rightarrow \mathbb{N}$ be any function such that
$1\leq d(n)\leq n-1$ for all $n\geq 4$, and denote by
$\Omega_{n,d(n)}$ 
the set of all $d(n)$-regular simple
digraphs with vertex set $[n]=\{ 1,2,\ldots, n\}$.  
We usually hide the dependence of $d$ on $n$ and just write
$d$ rather than $d(n)$;  similarly we write $\Omega_{n,d}$ for
$\Omega_{n,d(n)}$. 

We will study the Markov chain $\mathcal{M}$ described in
Figure~\ref{f:chain}, which we call the \emph{switch chain}.
From a given state, an unordered pair of two 
distinct arcs are chosen uniformly at random.  Then the
two chosen arcs exchange heads, unless the chosen arcs are incident or
exchanging their heads would create a repeated arc.
Note that two arcs are non-incident if and only if the set of endvertices
of the two arcs contains exactly four vertices.
\begin{figure}[ht]
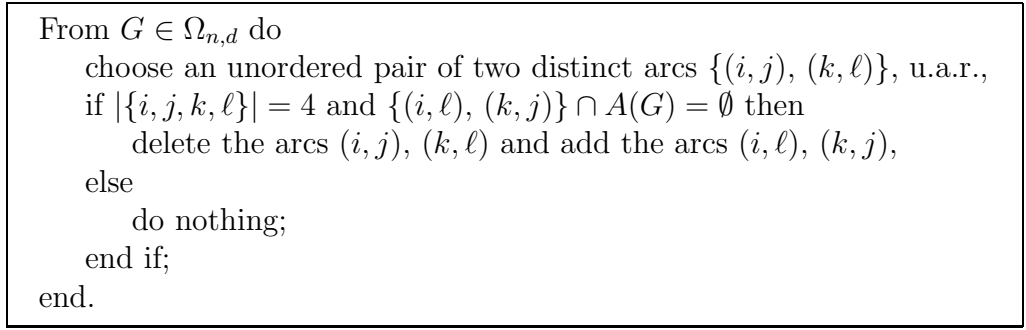

\begin{center}
\fbox{%
\begin{minipage}{17cm}
\begin{tabbing}
A\=\kill
\> From $G\in\Omega_{n,d}$ do\\
\> AA\= \kill
\> \> choose an unordered pair of 
          two distinct arcs $\{ (i,j)$, $(k,\ell)\}$, u.a.r., \hspace*{2mm}\\
\> \> if $|\{ i,j,k,\ell\}| = 4$ and 
            $\{ (i,\ell), \, (k,j)\} \cap A(G) = \emptyset$  then\\
\>\> AA\= \kill
\>\>\> delete the arcs $(i,j)$, $(k,\ell)$ 
                 and add the arcs $(i,\ell)$, $(k,j)$, \hspace*{2mm}\\
\> \> else\\
\> \>\> do nothing;\\
\> \> end if;\\
\>  end.
\end{tabbing}
\end{minipage}}
\end{center}
\caption{The Markov chain on $\Omega_{n,d}$}
\label{f:chain}
\end{figure}
We will write $[ijk\ell]$ as shorthand notation for the switch
that replaces the arcs $(i,j)$, $(k,\ell)$ with the arcs $(i,\ell)$,
$(k,j)$, as in Figure~\ref{f:chain}. 

The transition matrix $P$ of the Markov chain
satisfies $P(X,Y) = P(Y,X) = 1/\binom{dn}{2}$ if $X$ and $Y$ differ by
just a switch, with all other non-diagonal entries equal to zero.  
Therefore $P$ is symmetric, so the stationary distribution
of the Markov chain is uniform over $\Omega_{n,d}$.

It is not difficult to see that the switch chain is aperiodic, but
for completeness we give a brief proof.

\begin{lemma}
The switch chain on $\Omega_{n,d}$ is aperiodic
for $n\geq 4$ and $1\leq d\leq n-1$.
\label{aperiodic}
\end{lemma}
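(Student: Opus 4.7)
The plan is to show that $P(G,G) > 0$ for every state $G \in \Omega_{n,d}$, which is enough for aperiodicity since it makes $1$ a return time at $G$. The transition procedure in Figure~\ref{f:chain} leaves the state unchanged whenever the randomly chosen unordered pair of distinct arcs is incident (so that the set of their four endpoints has size strictly less than $4$), or whenever the proposed new arcs already appear in $G$. Thus the goal reduces to exhibiting one such "bad" pair of distinct arcs in $G$; incident pairs are the easiest to produce.

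Since $d \ge 1$, any fixed vertex $v \in [n]$ is the head of some arc $(u,v)$ and the tail of some arc $(v,w)$. Because $G$ is loopless we have $u \neq v$ and $v \neq w$, and the two arcs are distinct (they have opposite roles at $v$) while sharing the vertex $v$; hence $\{(u,v),(v,w)\}$ is incident. The probability that a single transition from $G$ selects exactly this unordered pair is $1/\binom{dn}{2}$, which is strictly positive for $n \ge 4$ and $d \ge 1$. Therefore $P(G,G) \ge 1/\binom{dn}{2} > 0$, giving aperiodicity at $G$ and, by the irreducibility established elsewhere, at every state.

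There is no substantive obstacle: the only ingredient beyond the definition of the transition rule is the observation that each vertex simultaneously has an in-neighbour and an out-neighbour, which is immediate from $d \ge 1$. I note in passing that the same estimate yields the uniform bound $\max_{G \in \Omega_{n,d}} P(G,G)^{-1} \le \binom{dn}{2}$, which together with the trivial choice of length-$1$ odd "cycles" $\sigma_G$ (the self-loops just exhibited) is exactly what is needed to apply the $\ell(\Sigma) = 1$ case of Corollary~\ref{congestion-corollary} when controlling the smallest eigenvalue $\lambda_{N-1}$ later in the paper.
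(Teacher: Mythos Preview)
Your proof is correct and essentially identical to the paper's: both exhibit, at an arbitrary vertex, one in-arc and one out-arc (guaranteed by $d\ge 1$) to produce an incident pair whose selection forces the chain to stay put, yielding $P(G,G)\ge 1/\binom{dn}{2}>0$. The only superfluous remark is the appeal to irreducibility at the end---since you have already shown a self-loop at \emph{every} state, aperiodicity follows directly without it.
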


\begin{proof}
Fix $G\in \Omega_{n,d}$ and choose an arc $(\alpha,\beta)\in A(G)$.  
Since $d\geq 1$, there exists an arc $(\gamma,\alpha)\in A(G)$.
These two arcs are distinct but incident, and if they
are the arcs chosen in the transition procedure then the switch
will be rejected and the chain will remain at $G$. Hence
$P(G,G)\geq 1/\binom{dn}{2} > 0$.  So there is a self-loop at every
state of $\Omega_{n,d}$, which proves that the chain is aperiodic.
\end{proof}

Rao, Jana and Bandyopadhyay~\cite{RJB} showed that the switch chain is
not always irreducible
on the set of all digraphs with a given degree sequence.
Characterisations of degree sequences for which the chain is
irreducible were given in~\cite{BM,lamar}.
We will now prove that when $n\geq 4$ and $d\geq 1$
the set $\Omega_{n,d}$ is connected under switches;
that is, that the switch chain is irreducible on $\Omega_{n,d}$.
(This was already known when $d=1$,
see Diaconis, Graham and Holmes~\cite[Remark 2]{DGH}.)

We will use results from LaMar~\cite{lamar}.
For a set $U$ of vertices in a digraph $G$, 
define the sets $\C^{(i,j)} = \C^{(i,j)}(U,G)$
for ${(i,j)}\in \mathbb{Z}_2^2$, as follows:
\begin{align*}
\C^{(0,0)} &= \{ x\in V(G)- U : (x, u)\not\in A(G),\,
                 (u,x)\not\in A(G) \text{ for all } u\in U\},\\
\C^{(0,1)} &= \{ x\in V(G)- U : (x, u)\not\in A(G),\,
                 (u,x)\in A(G) \text{ for all } u\in U\},\\
\C^{(1,0)} &= \{ x\in V(G)- U : (x, u)\in A(G),\,
                 (u,x)\not\in A(G) \text{ for all } u\in U\},\\
\C^{(1,1)} &= \{ x\in V(G)- U : (x, u)\in A(G),\,
                 (u,x)\in A(G) \text{ for all } u\in U\}.
\end{align*}
(In~\cite{lamar} these sets are called $\mathcal{C}^0$, $\mathcal{C}^+$,
$\mathcal{C}^-$, $\mathcal{C}^{\pm}$, respectively.)

\begin{lemma}
\label{connected}
The space $\Omega_{n,d}$ is connected under switches when $n\geq 4$
and $1\leq d\leq n-1$.
\end{lemma}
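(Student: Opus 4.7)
The plan is to prove the result by induction on the Hamming distance
\[ \delta(G, G') = |A(G) \setminus A(G')|. \]
If $\delta = 0$ then $G = G'$ and there is nothing to prove, so suppose $\delta \geq 1$. Since $G$ and $G'$ are both $d$-regular, $|A(G)| = |A(G')| = dn$, whence $|A(G') \setminus A(G)| = \delta$ as well. The goal is to exhibit a valid switch (or a short sequence of valid switches) in $G$ that strictly decreases $\delta$.

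Colour the arcs of $A(G) \setminus A(G')$ red and those of $A(G') \setminus A(G)$ blue. Regularity of $G$ and $G'$ forces, at every vertex, the number of red out-arcs to equal the number of blue out-arcs, and likewise for in-arcs. Encoding each arc $(i,j)$ as an edge $\{i^+, j^-\}$ in a bipartite multigraph on $[n] \cup [n]'$ (as in the bipartite correspondence mentioned in the introduction), the union of red and blue edges is Eulerian, so it decomposes into closed trails alternating in colour. Choose a shortest such alternating trail $T$ of length $2k$.

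If $k = 2$, then $T$ corresponds to red arcs $(i,j), (k,\ell)$ and blue arcs $(i,\ell), (k,j)$. We must have $i \neq k$ and $j \neq \ell$ (otherwise a red and a blue arc would coincide), and $(i,\ell), (k,j) \notin A(G)$ since these arcs are blue. Hence the switch $[ijk\ell]$ is valid in $G$ and reduces $\delta$ by $2$, and induction finishes the argument. If $k \geq 3$, the plan is to choose two red arcs $(i,j), (k,\ell)$ in $T$ and perform a switch that either turns a red arc into a blue arc (reducing $\delta$) or splits $T$ into a strictly shorter alternating trail, after which repeated application reduces to the $k=2$ case. The potential obstructions are that the four vertices are not all distinct, or that one of $(i,\ell), (k,j)$ already lies in $A(G)$ (hence creating a multi-arc if we naively switch).

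To remove such obstructions I would appeal to the sets $\mathcal{W}^{(s,t)}(U, G)$ of LaMar~\cite{lamar} with $U$ chosen to be a small subset of the vertices involved in the blocked switch; because $G$ is $d$-regular with $1 \leq d \leq n-1$ and $n \geq 4$, one can show that at least one of $\mathcal{W}^{(0,1)}$ or $\mathcal{W}^{(1,0)}$ is non-empty, furnishing an auxiliary vertex $x$ and an auxiliary switch that frees the desired reconfiguration. The main obstacle is exactly this step: verifying that in every blocked configuration one of LaMar's $\mathcal{W}$-sets is non-empty, so that a constant number of auxiliary switches is always enough to either reduce $\delta$ or shorten $T$. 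The extreme case $d = n-1$ is trivial because $\Omega_{n,n-1}$ is a singleton, so only $1 \leq d \leq n-2$ requires this analysis, and there the counting constraints from regularity leave ample room for the needed auxiliary vertex to exist.
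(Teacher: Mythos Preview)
Your proposal has a genuine gap: the entire difficulty of the lemma lies in the case you leave unresolved. When the shortest alternating trail has $k=3$, the three red arcs form a directed 3-cycle on $\{v_0,v_1,v_2\}$ and the three blue arcs form the reverse 3-cycle; any two red arcs then share a vertex, so no switch among them is even legal, let alone $\delta$-reducing. This is precisely the Rao--Jana--Bandyopadhyay obstruction, and it cannot be dispatched by ``picking two red arcs and switching''. Your remedy is to ``appeal to the sets $\mathcal{W}^{(s,t)}(U,G)$ of LaMar'' and claim that ``one of $\mathcal{W}^{(0,1)}$ or $\mathcal{W}^{(1,0)}$ is non-empty'', but you neither specify $U$ nor prove this claim, and in fact the claim as stated is not what is needed: one must produce either a \emph{useful neighbour} (a vertex outside all four $\mathcal{W}$-classes) or a \emph{useful arc}, and then exhibit an explicit short sequence of switches that reverses the 3-cycle. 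That is the whole content of the lemma, and your proposal does not supply it.

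The paper takes a different and much shorter route. Rather than building a switch sequence, it invokes LaMar's characterisation of degree sequences for which the switch chain fails to be irreducible: if $\Omega_{n,d}$ were disconnected, some $G\in\Omega_{n,d}$ would contain a directed 3-cycle on $\{v_0,v_1,v_2\}$ such that every other vertex lies in $\bigcup_{(i,j)}\mathcal{W}^{(i,j)}$, no arcs go from $\mathcal{W}^{(0,0)}\cup\mathcal{W}^{(0,1)}$ to $\mathcal{W}^{(0,0)}\cup\mathcal{W}^{(1,0)}$, and all arcs go from $\mathcal{W}^{(1,0)}\cup\mathcal{W}^{(1,1)}$ to $\mathcal{W}^{(0,1)}\cup\mathcal{W}^{(1,1)}$. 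A direct degree count then forces $\mathcal{W}^{(0,0)}\cup\mathcal{W}^{(0,1)}\cup\mathcal{W}^{(1,0)}=\emptyset$, whence every vertex in $\mathcal{W}^{(1,1)}$ has in-degree $n-1$ while $v_0$ has in-degree $n-2$, a contradiction. This avoids constructing any switches at all; the work is outsourced to LaMar's theorem and the remaining argument is four lines of degree arithmetic.
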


\begin{proof}
For a contradiction, assume that $\Omega_{n,d}$ is not connected
under switches.  Then by LaMar~\cite[Theorems 3.3 and 3.4]{lamar},  
for any digraph $G\in\Omega_{n,d}$ there is a 
set of vertices $\{v_0,v_1,v_2\}$ such that the induced digraph
$G[\{v_0,v_1,v_2\}]$ is a directed 3-cycle and, writing
$\C^{(i,j)} = \C^{(i,j)}(\{ v_0,v_1,v_2\},G)$ for all 
${(i,j)}\in\mathbb{Z}_2^2$,
\begin{itemize}
\item[(i)] all vertices in $V(G)$ other than $\{ v_0, v_1, v_2\}$ belong to
${\bigcup_{{(i,j)}\in\mathbb{Z}_2^2} \C^{(i,j)}}$,
\item[(ii)] no arcs from  $\C^{(0,0)}\cup \C^{(0,1)}$  to  
$\C^{(0,0)}\cup\C^{(1,0)}$  are present, 
\item[(iii)] all (non-loop) arcs from  $\C^{(1,0)}\cup \C^{(1,1)}$ 
      to $\C^{(0,1)}\cup \C^{(1,1)}$  are present.
\end{itemize}
Let
$n^{(i,j)} = |\C^{(i,j)}|$ for ${(i,j)}\in \mathbb{Z}_2^2$.  
Considering the
in-degree and out-degree of $v_0$ gives, using (i),
\[ d = n^{(1,0)} + n^{(1,1)} + 1 = n^{(0,1)} + n^{(1,1)} + 1\]
(and in particular, $n^{(0,1)} = n^{(1,0)}$).  However, by (ii),
the in-degree of any element of $\C^{(1,0)}$  is at most
$n^{(1,0)} + n^{(1,1)} - 1 = d-2$,
the out-degree of any element of $\C^{(0,1)}$ is at most
$n^{(0,1)} + n^{(1,1)} - 1 = d-2$
and the out-degree of any element of $\C^{(0,0)}$ is at most 
$n^{(0,1)} + n^{(1,1)}  = d-1$.
This contradicts the assumption that $G\in\Omega_{n,d}$ unless
\[ \C^{(0,0)} \cup\C^{(0,1)}\cup\C^{(1,0)} =\emptyset.\]
But then $\C^{(1,1)} = V - \{ v_0,v_1,v_2\}$, and this set is
nonempty as $n\geq 4$.  By (iii), the in-degree of any element of 
$\C^{(1,1)}$ is $n-1$. Since the in-degree of $v_0$ is $n-2$,
we obtain a contradiction. 
\end{proof}

Suppose that $G\in\Omega_{n,d}$ contains a directed 3-cycle on the
vertices $v_0, v_1, v_2$.  Consider the sets 
$\C^{(i,j)}=\C^{(i,j)}(\{ v_0, v_1, v_2\},G)$
where ${(i,j)}\in\mathbb{Z}_2^2$.
If $x\in V(G) - \{ v_0,v_1,v_2\}$
does not belong to
$\bigcup_{{(i,j)}\in\mathbb{Z}_2^2}\C^{(i,j)}$
then we say that $x$ is a \emph{useful neighbour}
 for the given 3-cycle.
(Note that $x$ must be an in-neighbour or an out-neighbour of at least one
vertex on the 3-cycle, since $x\not\in\C^{(0,0)}$.)  Similarly,
$(x,y)$ is called a \emph{useful arc} for the given 3-cycle 
if $x\neq y$ and one of the following conditions holds:
\begin{itemize}
\item[(U1)] $(x,y)\in A(G)$, with
$x\in\C^{(0,0)}\cup\C^{(0,1)}$ and
$y\in\C^{(0,0)}\cup\C^{(1,0)}$;
\item[(U2)]
$(x,y)\not\in A(G)$, with
$x\in\C^{(1,0)}\cup\C^{(1,1)}$ and
$y\in\C^{(0,1)}\cup\C^{(1,1)}$.  
\end{itemize}
The following result will be used later.

\begin{lemma}
Suppose that $G\in\Omega_{n,d}$ contains a set of three vertices
$\{ v_0,v_1,v_2\}$ such that the induced digraph
$G[\{v_0,v_1,v_2\}]$ is a  directed 3-cycle. 
Then there exists a useful neighbour or a useful arc for 
this 3-cycle.
\label{useful}
\end{lemma}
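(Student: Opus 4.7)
The plan is to argue by contradiction, observing that the failure of the conclusion forces exactly the three structural conditions (i), (ii), (iii) that appeared in the proof of Lemma~\ref{connected}, and then re-running that degree-counting argument verbatim.

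More precisely, suppose towards a contradiction that the directed 3-cycle on $\{v_0,v_1,v_2\}$ admits no useful neighbour and no useful arc. The absence of a useful neighbour says that every $x\in V(G)-\{v_0,v_1,v_2\}$ lies in $\bigcup_{(i,j)\in\mathbb{Z}_2^2}\C^{(i,j)}$, which is exactly condition (i). The absence of a useful arc of type (U1) says that no arc from $\C^{(0,0)}\cup\C^{(0,1)}$ to $\C^{(0,0)}\cup\C^{(1,0)}$ is present in $A(G)$, which is condition (ii); and the absence of a useful arc of type (U2) says that every non-loop potential arc from $\C^{(1,0)}\cup\C^{(1,1)}$ to $\C^{(0,1)}\cup\C^{(1,1)}$ is present in $A(G)$, which is condition (iii).

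At this point I would simply invoke the counting in the proof of Lemma~\ref{connected}: setting $n^{(i,j)}=|\C^{(i,j)}|$, condition (i) together with the degrees of $v_0$ gives
\[ d = n^{(1,0)} + n^{(1,1)} + 1 = n^{(0,1)} + n^{(1,1)} + 1,\]
while condition (ii) forces the in-degrees of vertices in $\C^{(1,0)}$, the out-degrees of vertices in $\C^{(0,1)}$, and the out-degrees of vertices in $\C^{(0,0)}$ to fall short of $d$, unless $\C^{(0,0)}\cup\C^{(0,1)}\cup\C^{(1,0)}=\emptyset$. In the remaining case one has $\C^{(1,1)}=V(G)-\{v_0,v_1,v_2\}\neq\emptyset$ since $n\geq 4$, and then (iii) forces every vertex of $\C^{(1,1)}$ to have in-degree $n-1$, contradicting the fact that $v_0$ itself has in-degree $d\leq n-2$ (note $d\le n-1$ and $v_0$ cannot be an in-neighbour of itself, while the 3-cycle contributes only one in-arc to $v_0$ among $\{v_0,v_1,v_2\}$, matching $d$).

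There is no real obstacle here, since the entire degree-counting machinery has already been carried out in Lemma~\ref{connected}; the only thing to verify carefully is the bookkeeping that matches each of (U1) and (U2) with its corresponding LaMar condition, and in particular that (U2) is stated for arcs that are genuinely \emph{absent} (so that the negation produces the ``all present'' clause (iii)). Once that dictionary is set up, the conclusion is immediate from the proof of Lemma~\ref{connected}.
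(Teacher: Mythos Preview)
Your proposal is correct and follows essentially the same approach as the paper's own proof: both argue by contradiction, translate ``no useful neighbour'' and ``no useful arc'' into conditions (i), (ii), (iii) from the proof of Lemma~\ref{connected}, and then re-run that degree-counting argument to reach a contradiction. The paper's proof is terser (it simply says ``arguing as in the proof of Lemma~\ref{connected} leads to a contradiction''), whereas you spell out the counting again; your final parenthetical about $v_0$'s in-degree is a bit tangled in its phrasing, but the substance is right since in the remaining case the earlier equation $d=n^{(1,1)}+1=n-2$ is what clashes with the forced in-degree $n-1$ for vertices of $\C^{(1,1)}$.
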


\begin{proof}
Suppose that there is no useful neighbour for the 3-cycle. 
Then condition (i) from the proof of Lemma~\ref{connected} holds.
For a contradiction, assume that there is no useful arc $(x,y)$.
Then all (non-loop) arcs from $\C^{(0,0)}\cup\C^{(0,1)}$ to
$\C^{(0,0)}\cup\C^{(1,0)}$ are absent in $G$, and all
(non-loop) arcs from $\C^{(1,0)}\cup\C^{(1,1)}$ to $\C^{(0,1)}\cup \C^{(1,1)}$
are present in $G$. That is,
conditions (ii) and (iii) from the proof of Lemma~\ref{connected} also hold.  
Arguing as in the proof of Lemma~\ref{connected} leads to a contradiction.
\end{proof}

Now we prove a bound on the smallest eigenvalue of the switch chain.

\begin{lemma}
Suppose that $n\geq 4$ and $1\leq d\leq n-1$, and let $N = |\Omega_{n,d}|$.
The smallest eigenvalue $\lambda_{N-1}$ of the switch chain satisfies
\[ (1 + \lambda_{N-1})^{-1} \leq \dfrac{1}{4}\, d^2 n^2.\]
\label{our-smallest-eigval}
\end{lemma}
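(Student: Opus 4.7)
The plan is to apply the special case $\ell(\Sigma)=1$ of Corollary~\ref{congestion-corollary}, which says that if every state of $\mathcal{M}$ has a self-loop and we take $\sigma_G$ to be the self-loop at $G$ for each $G\in\Omega_{n,d}$, then
\[ (1+\lambda_{N-1})^{-1} \leq \dfrac{1}{2} \operatorname{max}_{G\in\Omega_{n,d}} P(G,G)^{-1}. \]
Thus the task reduces to giving a uniform lower bound on the self-loop probability $P(G,G)$ over all states $G$.

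Such a bound is implicit in the proof of Lemma~\ref{aperiodic}. For any $G\in\Omega_{n,d}$, since $d\geq 1$, we may pick any arc $(\alpha,\beta)\in A(G)$ and then some arc $(\gamma,\alpha)\in A(G)$ into $\alpha$ (which exists because the in-degree of $\alpha$ is $d\geq 1$). These are two distinct but incident arcs, so if the transition procedure chooses the unordered pair $\{(\alpha,\beta),(\gamma,\alpha)\}$, the switch is rejected and $\mathcal{M}$ remains at $G$. Since this specific unordered pair of arcs is chosen with probability exactly $1/\binom{dn}{2}$, we obtain
\[ P(G,G) \geq \frac{1}{\binom{dn}{2}} \quad\text{for every } G\in\Omega_{n,d}. \]

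Substituting this into the bound from Corollary~\ref{congestion-corollary} yields
\[ (1+\lambda_{N-1})^{-1} \leq \dfrac{1}{2} \binom{dn}{2} = \frac{dn(dn-1)}{4} < \dfrac{1}{4}\, d^2 n^2, \]
as required. There is no real obstacle here: the bound follows immediately from Corollary~\ref{congestion-corollary} once one observes that the construction used in Lemma~\ref{aperiodic} produces a self-loop of non-negligible probability at every state. This is precisely the motivation for proving Lemma~\ref{smallest-eigval} and its corollary: it lets us bypass laziness and handle the smallest eigenvalue essentially for free, whenever the chain has a guaranteed self-loop at every state.
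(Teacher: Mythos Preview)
Your proof is correct and follows essentially the same approach as the paper: both invoke the self-loop at every state from Lemma~\ref{aperiodic}, apply the $\ell(\Sigma)=1$ case of Corollary~\ref{congestion-corollary}, and use the bound $P(G,G)^{-1}\leq\binom{dn}{2}$ to conclude.
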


\begin{proof}
By Lemma~\ref{aperiodic}, there is a self-loop $\sigma_x$
in $\Gamma$ at every $x\in\Omega_{n,d}$.   
Let $\Sigma=\{\sigma_x\}$ be the set of these 1-cycles.  
Since $\ell(\Sigma) = 1$, the result follows
from the second statement of Corollary~\ref{congestion-corollary}
since
\[ \operatorname{max}_{x\in\Omega_{n,d}} P(x,x)^{-1} \leq \binom{dn}{2}.\]
\end{proof}

We also need a bound on the second-largest eigenvalue of the
switch chain.   

\begin{proposition}
Suppose that $n\geq 4$ and $1\leq d\leq n-1$, and let 
$\lambda_1$ be the second-largest eigenvalue of the
switch chain on $\Omega_{n,d}$.  Then
\[
(1 - \lambda_1)^{-1} \leq 50 d^{25} n^9.
\]
\label{our-second-largest-eigval}
\end{proposition}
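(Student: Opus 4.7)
The plan is to bound $(1-\lambda_1)^{-1}$ by applying Sinclair's multicommodity flow theorem (Lemma~\ref{second-eigval}): construct an explicit flow $f$ on the switch-chain graph $\mathcal{G}=(\Omega_{n,d},\Gamma)$ and then separately bound its maximum path length $\ell(f)$ and its maximum load $\rho(f)$. The construction is modelled on the flow of Cooper, Dyer and Greenhill~\cite{CDG} for undirected regular graphs, with the directed structure forcing several additional cases.

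To build the flow, for each ordered pair $X,Y\in\Omega_{n,d}$ view the symmetric difference $H=X\triangle Y$ as a $2$-arc-coloured digraph in which arcs of $X\setminus Y$ are \emph{blue} and arcs of $Y\setminus X$ are \emph{red}. Regularity of $X$ and $Y$ forces the blue and red in-degrees to agree at every vertex, and likewise for out-degrees, so $H$ decomposes into arc-disjoint closed directed walks that alternate in colour. I would process these alternating circuits one at a time, performing a sequence of switches that replaces the blue arcs of the current circuit by its red arcs while leaving the rest of the current state intact; concatenating these sequences yields a walk in $\mathcal{G}$ from $X$ to $Y$. To obtain a genuine flow satisfying~\eqref{flow-condition} rather than a single canonical path per pair, the auxiliary choices (vertex orderings, pairings of blue and red half-edges, decomposition into circuits) are randomised and the weight $\pi(X)\pi(Y)=1/N^{2}$ is split evenly among the resulting paths. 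Since $|H|\leq 2dn$ and each alternating circuit of length $2k$ is processed by $O(k)$ switches, every path with positive flow has length $O(dn)$, so $\ell(f)=O(dn)$.

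The main work is bounding $\rho(f)=\max_{e}f(e)/Q(e)$. Fixing a transition $e=\{Z,Z'\}\in\Gamma$, I would bound $\sum_{X,Y,\,p\ni e}\pi(X)\pi(Y)\,w(p)$ by exhibiting an injection from the set of quadruples $(X,Y,Z,p)$ routed through $e$ into the product of $\Omega_{n,d}$ with a polynomial-size bookkeeping alphabet. The key device is the \emph{encoding} $L=X\triangle Y\triangle Z$ viewed as a subset of $[n]\times[n]$: a combinatorial analysis in the spirit of~\cite{CDG} shows that $L$ differs from some genuine $L'\in\Omega_{n,d}$ in only $O(1)$ arcs, so $L$ can be specified by $L'$ together with a short correction. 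Combined with position data that records where along $p$ the transition $e$ occurs and which alternating circuit of $H$ is currently being processed, this yields a map $(X,Y,p)\mapsto(L',\text{correction},\text{position data})$ with polynomially bounded fibres. Noting that $Q(e)=\pi(Z)P(Z,Z')=\Theta(1/(Nd^{2}n^{2}))$ and summing, one extracts $\rho(f)$ as a polynomial in $d$ and $n$, and the stated inequality follows from Lemma~\ref{second-eigval} by multiplying with the length bound.

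The hardest part, and the source of almost all the $d$-factors, is verifying the encoding and enumerating all correction types when $H$ contains structures with no undirected analogue. Foremost among these are \emph{directed $2$-circuits}, i.e.\ pairs $\{(u,v),(v,u)\}$ of opposite colour in $H$: because their two arcs are incident, a single switch cannot remove them, so they must be processed jointly with another $2$-circuit or paired with an auxiliary arc of the current state, with simplicity and $d$-regularity of each intermediate digraph re-verified in every case. Further new cases arise when an alternating circuit has length four but contains oppositely oriented pairs, when $Z$ and the currently processed circuit share vertices, or when consecutive switches share arcs. Handling these cases, as foreshadowed by Section~\ref{s:2circuit}, requires a richer correction alphabet than in~\cite{CDG}; this is what inflates the final polynomial degrees to $d^{25}n^{9}$.
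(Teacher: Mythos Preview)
Your overall framework is right and matches the paper: apply Lemma~\ref{second-eigval}, build a flow by decomposing $H=G\triangle G'$ into alternating circuits indexed by pairings, process each circuit by a sequence of switches, use an encoding $L$ (the paper takes $L+Z=G+G'$ as matrices, which agrees on its $\{0,1\}$-entries with your $G\triangle G'\triangle Z$) to bound the load, and observe $\ell(f)\leq dn$. So the architecture of your argument is essentially that of the paper.

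However, you have misidentified both the circuit structure and the source of the extra difficulty in the directed case. First, the alternating circuits in $H$ do \emph{not} form closed directed walks: since blue and red in-degrees agree at each vertex and blue and red out-degrees agree at each vertex, consecutive arcs along a circuit alternate in \emph{orientation} as well as colour (head-to-head, then tail-to-tail, etc.); see the definition of forward and reverse circuits in Section~\ref{s:flow}. Your description of $H$ as decomposing into colour-alternating directed walks is not what actually happens, and the switch sequences you envisage would not be well-defined on such walks.

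Second, the hard new cases are \emph{not} opposite-arc pairs $\{(u,v),(v,u)\}$. In the paper's terminology a ``2-circuit'' is a closed alternating walk that passes through its start vertex twice with a particular orientation pattern (Section~\ref{s:circuit}); the genuinely new obstacles are the \emph{eccentric} 2-circuits (where the vertex coincidences $y_{i,j}=x_{i,j+1}$ kill the shortcut used in~\cite{CDG}) and the \emph{triangles} (2-circuits on three vertices forming a directed 3-cycle in each colour). Triangles cannot be processed by any sequence of switches confined to their own arcs at all; one must borrow a useful neighbour or useful arc from outside $H$ (Lemma~\ref{useful}, Section~\ref{s:triangle}), and this is what forces the finer analysis of auxiliary/interesting arcs (Lemma~\ref{zoo}) and the larger correction alphabet. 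Until you confront these two cases specifically, the encoding-and-injection step cannot be carried out, and the claimed polynomial bound on $\rho(f)$ is unjustified.
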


The proof of Proposition~\ref{our-second-largest-eigval}
is lengthy and quite technical.  We give an outline of the
proof below, and full details in Sections~\ref{s:flow}--\ref{s:analysis}.
But first, we show how Theorem~\ref{main} can be proved
from Proposition~\ref{our-second-largest-eigval}.

\begin{proof}[Proof of Theorem~\ref{main}]\
If the smallest eigenvalue $\lambda_{N-1}$ is nonnegative
then $\lambda_* = \lambda_1$, and by 
Proposition~\ref{our-second-largest-eigval} we have  
\begin{equation}
 (1-\lambda_*)^{-1} \leq 50 d^{25} n^9.
\label{our-next-largest}
\end{equation}
Suppose now that $\lambda_{N-1}$ is negative. 
Then $1-|\lambda_{N-1}| = 1+\lambda_{N-1}$
and it follows from Lemma~\ref{our-smallest-eigval} and
Proposition~\ref{our-second-largest-eigval} that
(\ref{our-next-largest}) also holds in this case.

Finally, we note that
\begin{equation}
\label{log-omega}
\log |\Omega_{n,d}| \leq dn \log(dn).
\end{equation}
(This is well-known but for completeness we sketch a proof.  Take a 
bipartite graph on $n+n$ vertices and
assign $d$ ``half-edges'' to each vertex on the side.
Arbitrarily  match each half-edge on the left to a half-edge
on the right.  There are at most $(dn)^{dn}$ ways to perform this
matching.  Finally, orient each edge from left to right and
identify the $j$'th vertex on each side, giving a digraph on $n$
vertices which may have loops or multiple arcs.  As each element of
$\Omega_{n,d}$ can be formed from at least one matching in this
way, we obtain an upper bound.)
Hence, since $\pi$ is uniform,
\[ \log 1/\pi^\ast = \log{|\Omega_{n,d}}| \leq dn\log (dn).\]
Substituting (\ref{our-next-largest}) and (\ref{log-omega}) into
Lemma~\ref{mix-eigvals} gives the stated bound on the mixing time,
completing the proof.
\end{proof}

Hence it remains to establish Proposition~\ref{our-second-largest-eigval}.
We use a multicommodity flow argument to prove this result.
Before embarking on the proof, we outline the major steps in the
argument.  (We note that our proof follows the same general outline
as most canonical path or multicommodity flow arguments, where encodings
are often used.  In particular, our proof builds upon
the argument from~\cite{CDG}.)
\begin{itemize}
\item  Given distinct digraphs $G, G'\in\Omega_{n,d}$, we define a
finite set $\Psi(G,G')$ of objects, called pairings.   
For each $\psi\in\Psi(G,G')$
we will define a canonical path $\gamma_\psi(G,G')$ from $G$ to $G'$,
indexed by $\psi$.  Then the flow $f$ is defined on
\[ \bigcup_{(G,G')} \{ \gamma_\psi(G,G')\mid \psi\in\Psi(G,G')\}\]
by
\[ f(\gamma_\psi(G,G')) = \frac{\pi(G)\,\pi(G')}{|\Psi(G,G')|}
                        = \left(|\Omega_{n,d}|^2\, |\Psi(G,G')|\right)^{-1},
\]
and is set to zero for all other paths.
Note that $f$ satisfies (\ref{flow-condition}). 
\item
To define $\gamma_\psi(G,G')$ we work with the symmetric difference
$H = G\triangle G'$ of $G$ and $G'$ (with arcs of $G-G'$ coloured
blue and arcs of $G'-G$ coloured red).  In Sections~\ref{s:circuits}
and~\ref{s:circuit}
we show how to decompose $H$ into a sequence of arc-disjoint subdigraphs called
1-circuits and 2-circuits, in a canonical way.  The canonical path
$\gamma_\psi(G,G')$
is formed by processing each of these 1-circuits and 2-circuits in
the given order.
\item  We can process 1-circuits, and certain 2-circuits, in a way which
is very similar to the method used in~\cite{CDG} for undirected graphs.
The 2-circuits which can be handled in this way are called \emph{normal}.
Section~\ref{s:1circuit} explains how to process a 1-circuit and
Section~\ref{s:normal} describes how to process a normal 2-circuit.
\item  The main difficulties in the proof arise from the need to handle
2-circuits which are not normal.  We further categorise these as
\emph{eccentric 2-circuits} or \emph{triangles}.   Sections~\ref{s:eccentric}
and~\ref{s:triangle} describe how to process these 2-circuits.
\end{itemize}
By this stage, the multicommodity flow is completely defined.
Next we must analyse the flow in order to bound the maximum load of the
flow, and hence the second-largest eigenvalue (using 
Lemma~\ref{second-eigval}).
\begin{itemize}
\item Let $(Z,Z')$ be a transition along one of the canonical paths
$\gamma_\psi(G,G')$, and suppose that this transition is performed
while processing the 1-circuit or 2-circuit $S$.  A set of 
\emph{interesting arcs} for $Z$ with respect to $(G,G',\psi)$
is defined.
These are arcs which have been disturbed during the
processing of $S$ and not yet returned to their original state, 
and they will play a key role in our analysis.  Lemma~\ref{zoo}
describes the structure of the digraph formed by the interesting arcs
(see also Figure~\ref{f:zoo}).
\item  Next we identify $G$, $G'$ and $Z$ with their adjacency matrices
and define a matrix $L$ by $L+Z = G+G'$.  Then $L$ is an $n\times n$
matrix with entries in $\{ -1,0,1,2\}$.  We say that $L$ is 
an \emph{encoding} for $Z$ with respect to $(G,G')$.  Lemma~\ref{notquiteunique}
shows that given $(Z,Z')$, $L$ and $\psi$ there are at most four
possibilities for $(G,G')$ such that $(Z,Z')$ is a transition on
$\gamma_\psi(G,G')$ and $L$ is an encoding for $Z$ with respect
to $(G,G')$.   Further information about the structure of $L$ is
given in Lemma~\ref{structure}.
\item  Now the notion of encoding is broadened to encompass any $n\times n$
matrix with entries in $\{ -1,0,1,2\}$ such that all row sums and column
sums equal $d$.  Given $Z\in \Omega_{n,d}$, we say that the
encoding $L$ is $Z$\emph{-valid} if every entry of $L+Z$ belongs
to $\{ 0,1,2\}$ and $L, Z, H$ satisfy the conclusions of
Lemma~\ref{structure}.  (Here $H$ is the digraph defined by all
entries of $L+Z$ which equal 1.)  Lemma~\ref{Zvalid} proves a
useful fact about $Z$-valid encodings.
\item Next we explain how to apply switches to encodings, and prove
in Lemma~\ref{fix} (using Lemma~\ref{Zvalid})
that any $Z$-valid encoding can be transformed into
an element of $\Omega_{n,d}$ using at most three switches.
Counting the number of ways these switches can be performed in
reverse leads to an upper bound 
of the form $\operatorname{poly}(n,d)\, |\Omega_{n,d}|$
on the number of $Z$-valid encodings,
as proved in Lemma~\ref{poly}.
\item  Combining all this allows us to prove an upper bound on the
total flow routed through an arbitrary transition of the Markov chain.
This bound, of the form $\operatorname{poly}(n,d)\, |\Omega_{n,d}|^{-1}$,
is proved in Lemma~\ref{load}.  With this in hand it is easy to establish
a polynomial bound on the maximum load $\rho(f)$ of the flow, and
hence to prove Proposition~\ref{our-second-largest-eigval}.
\end{itemize}

\section{Defining the flow}\label{s:flow}

We now define the multicommodity flow which will be used
to bound the second largest eigenvalue, and hence the mixing time,
of the switch chain for regular directed graphs.

For $G,G'\in\Omega_{n,d}$, let $H = G\triangle G'$ be
the symmetric difference of $G$ and $G'$,  together with
an arc-colouring which colours all arcs of $G - G'$ blue
and all arcs of $G' - G$ red.  This arc colouring means
that we can think of $H$ as the symmetric difference of
the \emph{ordered pair} $(G,G')$.

For $v\in V$ let $\theta_v$ be the blue in-degree of $v$,
which equals the red in-degree of $v$, and let $\phi_v$
be the blue out-degree of $v$, which equals the red out-degree
of $v$.  Choose a \emph{pairing} of the red and blue arcs 
around each vertex as follows: each blue arc with head $v$ 
is paired with a red arc with head $v$, and
each blue arc with tail $v$ is paired with a red arc with 
tail $v$, defining two bijections (one from the set of blue arcs
with head $v$ to the set of red arcs with head $v$, and one
from the set of blue arcs with tail $v$ to the set of red arcs
with tail $v$).  Denote the set of all such pairings by
$\Psi(G,G')$.  Then
\begin{equation}
\label{number-pairings}
 |\Psi(G,G')| = \prod_{v\in V} \theta_v!\, \phi_v! 
\end{equation}
is the total number of pairings.

Write $\mathcal{G}$ for the underlying graph of the Markov chain
$\mathcal{M}$, where $\mathcal{G} = (\Omega_{n,d}, \Gamma)$
and each edge $e\in \Gamma$ corresponds to a transition of
$\mathcal{M}$.
For each pairing in $\Psi(G,G')$ we construct a canonical
path from $G$ to $G'$ in $\mathcal{G}$.
Each of these paths will carry $1/|\Psi(G,G')|$ of the total
flow from $G$ to $G'$.

We now introduce some terminology.  A 
\emph{forward circuit}
in $H$ is a string $C = w_0w_1\cdots w_{2k-1}$ over the alphabet $V$
such that the arcs
\begin{equation}
  (w_0,w_1),\, (w_2,w_1),\, (w_2,w_3),\, (w_4,w_3),\,
      \ldots (w_{2k-2},w_{2k-1}),\, (w_0,w_{2k-1})
\label{arclist}
\end{equation}
are all distinct, all belong to $A_H$ and alternate in colour:
that is, 
the arcs in 
\[ \{ (w_{2i},w_{2i+1}) : i=0,1,\ldots, k-1\}\]
all have one colour and the arcs  in
\[ \{ (w_{2i+2},w_{2i+1}) : i=0,1,\ldots,  k - 2\}\cup\{ (w_0,w_{2k-1})\}\] 
all have the other colour. 

The \emph{converse} of $H$ is the digraph obtained from $H$
by reversing the direction of every arc (but keeping the colours
the same).  
A \emph{reverse circuit}
 in $H$ is a string $C = w_0w_1\cdots w_{2k-1}$
over the alphabet $V$
which forms a forward circuit in the converse of $H$.
That is, the arcs
\begin{equation}
  (w_1,w_0),\, (w_1,w_2),\, (w_3,w_2),\, (w_3,w_4),\,
      \ldots (w_{2k-1},w_{2k-2}),\, (w_{2k-1},w_{0})
\label{reversearclist}
\end{equation}
are all distinct, all belong to $A_H$ and alternate in colour,
so that the arcs in 
\[ \{ (w_{2i+1},w_{2i}) : i=0,1,\ldots, k-1\}\]
all have one colour and the arcs in 
\[
\{  (w_{2i+1}, w_{2i+2}) : i=0,1,\ldots, k-2\}\cup \{ (w_{2k-1},w_0)\}
\]
all have the other colour. 
By \emph{circuit} we mean either a forward circuit or a reverse
circuit.  For a forward or reverse circuit $C$, denote by $A(C)$
the set of arcs in (\ref{arclist}) or (\ref{reversearclist}),
respectively. 
It is important to note that the arcs of a circuit  
alternate both in colour and orientation at each step.
While a circuit may contain both the arcs $(x,y)$ and $(y,x)$,
any three consecutive vertices on the circuit are distinct.

We now define two operations on digraphs. 
Let $\zeta$ denote the operation which takes a digraph to its converse
(that is, it reverses every arc in the digraph),
and let $\chi$ be the operation which takes a digraph to its complement.
Writing $[n]^{(2)}$ for the set of all ordered pairs of distinct
elements of $[n]$, the complement $\chi G$ of a digraph $G$ has
arc set $[n]^{(2)} - A(G)$.
Note that the operations $\zeta$ and $\chi$
commute and are both involutions.

We can also apply $\zeta$ and $\chi$ to the (arc-coloured) symmetric
difference $H=G\triangle G'$.  Here $\zeta H$ is the result of reversing
every arc in $H$, without changing the colour of any arc. 
Similarly, $\chi H$ is the result of exchanging the colour of every arc in $H$
(so that blue becomes red and vice-versa), without changing the
orientation of any arc.    To see this, note that the set of blue
arcs in $H=G\triangle G'$ equals
\[ A(G) - A(G')  = A(\chi G') - A(\chi G),\]
but this equals the set of red arcs in $(\chi G)\triangle (\chi G')$
(and similarly, the set of red arcs in $G\triangle G'$ equals the
set of blue arcs in $(\chi G)\triangle (\chi G')$).

Finally, we generalise these definitions so that they also
apply to (arc-coloured) sub-digraphs $U$ of $H$.  That is,
$\zeta U$ is the result of reversing every arc in $U$, without
changing the colour of any arc, while $\chi U$ is the result of
exchanging the colour of every arc in $U$ without changing the
orientation of any arc.

\subsection{Decomposition into circuits}\label{s:circuits}

Fix a pairing $\psi\in\Psi(G,G')$.  We decompose $H$ into
a sequence of circuits depending on $\psi$, as
follows.  Let $(w_0,w_1)$ be the lexicographically least
arc in $H$. Choose the arc $(w_2,w_1)$ which is paired with
$(w_0,w_1)$ at $w_1$.  (Note that if $(w_0,w_1)$ is blue then
$(w_2,w_1)$ is red, and vice-versa.  Furthermore, $w_2\neq w_0$
since $H$ is a symmetric difference.)  
Next choose the arc $(w_2,w_3)$
which is paired with $(w_2,w_1)$ at $w_2$.  (This arc will have the
same colour as $(w_0,w_1)$.)  Continue in this fashion.  Specifically, for
$i\geq 1$, if $w_{2i}\neq w_0$ then
let $(w_{2i}, w_{2i+1})$ be the arc which is paired with $(w_{2i},w_{2i-1})$
at $w_{2i}$ and let $(w_{2i+2},w_{2i+1})$ be the arc which is paired
with $(w_{2i},w_{2i+1})$ at $w_{2i+1}$.  The vertices $w_i$ are
not necessarily distinct, but the arcs are distinct.
The process terminates when $(w_0,w_{2k-1})$ is paired with $(w_0,w_1)$
at $w_0$, giving a forward circuit
$C_1 = w_0w_1\cdots w_{2k-1}$.

If $A(C_1) = A_H$ then $\mathcal{C} = \{ C_1\}$ and we are done.
Otherwise, take the lexicographically least arc not in $C_1$ and
generate a new circuit $C_2$ by the above procedure.  Continue generating
circuits until
\[ A_H = A(C_1)\cup A(C_2)\cup\cdots \cup A(C_s).\]
Then $\mathcal{C} = \{ C_1, C_2,\ldots, C_s\}$ and the circuits
$C_1, C_2,\ldots, C_s$ are arc-disjoint.
Note that, once the pairing has been chosen, $\mathcal{C}$ is formed without
regard to the colouring of $H$.  This property will be needed later.

Using $\mathcal{C}$, we form a path
\[ G = Z_0, Z_1,\ldots, Z_M = G'\]
from $G$ to $G'$ in the underlying graph of the Markov chain
(that is, to get from $Z_a$ to $Z_{a+1}$ we perform a switch).
The path is defined by processing each circuit $C_i$ in turn.
Processing a circuit changes its arcs 
from agreeing with $G$ to agreeing with $G'$, with no other arcs
being permanently altered  (though some may be temporarily changed while 
processing the circuit $C_i$).  The canonical path is defined
inductively.  If
\[ G = Z_0, Z_1,\ldots, Z_r\]
is the canonical path obtained by processing the first circuit $C_1$, and
\[ Z_r, Z_{r+1},\ldots, Z_N = G'\]
is the canonical path from $Z_r$ to $G'$ obtained by processing
the circuits $(C_2,\ldots, C_s)$, in order, then the canonical path
from $G$ to $G'$ corresponding to $\psi$ is given by the concatenation
of these two paths.

Thus it suffices to describe the canonical path corresponding to a 
particular circuit $C = w_0 w_1 \cdots w_{2k-1}$.
First we may need to decompose the circuit $C$ further.
A \emph{1-circuit} $S = v_0v_1v_2\cdots v_t$ is a string
on the letters $\{ w_0, w_1,\ldots, w_{2k-1}\}$
such that $v_0=w_0$ and $w_0$ appears only once in $S$.
Usually a 1-circuit will be a contiguous substring of $C$
(allowing reversal of direction and/or cyclic wrapping around $C$), but 
it may also contain one arc which is not an arc of $C$.

We now show how to decompose a circuit into a sequence of 1-circuits
(and possibly some single switches)
which will then be processed in order (as described in Section~\ref{s:1circuit})
to form the canonical path corresponding to $C$.

\subsection{Decomposition of a circuit }\label{s:circuit}

Given a circuit $C = w_0w_1\cdots w_{2k-1}$, let $v = w_0$
and let $C^{(0)}=C$ (which is the currently unprocessed segment of $C$).  
Suppose that the current digraph on the
canonical path from $G$ to $G'$ is $Z_J$.  If $w_i\neq v$ for $i=1,\ldots, 2k-1$
then $C^{(0)}$ is
a 1-circuit which we process (using the procedure described in Section~\ref{s:1circuit}), 
extending the canonical path as
\begin{equation}
\label{process}
G = Z_0,\ldots, Z_J, Z_{J+1},\ldots, Z_{J+t}.
\end{equation}
This completes the processing of $C$.  Otherwise,
$v$ appears $\theta$ times on $C^{(0)}$, 
where $2\leq \theta\leq \theta_v+\phi_v$.
Relabel the vertices on $C^{(0)}$ as
\[ C^{(0)} = v x_1\cdots y_1 v x_2 \cdots y_2 v
         \cdots v x_\theta \cdots y_\theta.\]
By construction, $C^{(0)}$ is a forward circuit, so $(v,x_1)\in A(C^{(0)})$.

Firstly, 
suppose that $S = v x_1 \cdots y_1$ is a 1-circuit.  That is, arcs
$(v,x_1)$ and $(v,y_1)$ are present on $S$, with opposite colours.
Process the 1-circuit $S$, extending the canonical path as in 
(\ref{process}), leaving the forward circuit
\[ C^{(1)} = v x_2\cdots y_2 v \cdots v x_\theta \cdots y_\theta\]
as the unprocessed section of $C$.  Then process $C^{(1)}$ inductively.

Next, suppose that we are not in the above situation (so that
the arcs $(v,x_1)$ and $(y_1,v)$ are present on $S$, with the same colour),
but that $S = vx_\theta \cdots y_\theta$ is a 1-circuit.
That is, the arcs $(v,x_\theta)$ and $(v,y_\theta)$ are present on $S$,
with opposite colours.
Process the 1-circuit $S$, extending the canonical path as in (\ref{process}), 
and leaving the forward circuit
\[ C^{(1)} = vx_1\cdots y_1 v \cdots v x_{\theta-1} \cdots y_{\theta-1}\]
to be processed inductively.  

Finally, suppose that neither of the two situations above apply.
Then the arcs $(v,x_1)$ and $(y_1,v)$ have one colour 
while $(x_\theta, v)$ and $(v, y_\theta)$ have the other colour.
We will process 
$S' = v x_1\cdots y_1 v x_\theta \cdots y_\theta $ (which we 
call a \emph{2-circuit}),
extending the canonical path as in (\ref{process})
and leaving
\[ C^{(1)} = v x_2 \cdots y_2 v \cdots v x_{\theta-1} \cdots y_{\theta-1}
\]
to be processed inductively.  Here $C^{(1)}$ is a reverse circuit
and we process it using the same procedure as described above, but with
all arcs reversed.

All 1-circuits and 2-circuits created by the above procedure are
called \emph{raw}.
The order in which we detect and process raw 1-circuits and raw
2-circuits implies
that both the processed and unprocessed sections of $C$ are contiguous
whenever the processing of a raw 1-circuit or raw 2-circuit is complete.
(That is, these sections form contiguous substrings of $C$, where a substring
is allowed to wrap around in a cyclic fashion.)

Suppose that $S$ is a raw 1-circuit or raw 2-circuit with
substring $abc$.  Fix $i\in \{ 0,1\}$ such that the corresponding
arcs are 
$\zeta^i(a,b)$ and $\zeta^i(c,b)$.  These arcs are called
\emph{successive arcs} along $S$.
Every raw 1-circuit or raw 2-circuit $S$ has the following property:  
successive arcs along $S$ are paired under $\psi$ at their
common endvertex $b$, except possibly when 
$b=v$ and the arcs are the first and last arcs of $S$.
We call this the \emph{well-paired} property,
which will be used in
Lemma~\ref{simplify} below.

Raw 1-circuits are processed using the method described in 
Section~\ref{s:1circuit}.  In most cases, raw 2-circuits must be further
decomposed (into a sequence of 1-circuits and/or switches)
before they can be processed, as described in 
Section~\ref{2circuit}.  It is here that extra
difficulties arise here when working with directed graphs.

Recall the notation for switches introduced after
Figure~\ref{f:chain}. 
Let $Q=\alpha\beta\gamma\delta$ be a circuit in $H$ which is
also a 4-cycle.  Set 
\[ i=\begin{cases} 0 & \text{ if $Q$ is a forward circuit,}\\
             1 & \text{ otherwise}.
\end{cases}
\]  
We now define notation for the switch which processes this 4-cycle,
starting from the current digraph $Z_J$ and producing the next digraph 
$Z_{J+1}$ on the canonical path.
Let $h=0$ if $\zeta^i (\alpha,\beta)\in Z_J$ and $h=1$ otherwise.
Then define 
\[ \zeta^i\chi^h [\alpha\beta\gamma\delta] = 
   \begin{cases} [\alpha\beta\gamma\delta] & \text{ if $i=0$, $h=0$,}\\ 
                 [\alpha\delta\gamma\beta] & \text{ if $i=0$, $h=1$,}\\ 
                 [\beta\alpha\delta\gamma] & \text{ if $i=1$, $h=0$,}\\ 
                 [\beta\gamma\delta\alpha] & \text{ if $i=1$, $h=1$.}
 \end{cases}
\]
If $h=0$ then the switch $\zeta^i\chi^h [\alpha\beta\gamma\delta]$ 
deletes the arcs
$\zeta^i(\alpha,\beta)$, $\zeta^i(\gamma,\delta)$ and replaces them
with $\zeta^i(\alpha,\delta)$, $\zeta^i(\gamma,\beta)$, while the
opposite occurs if $h=1$.

Finally, we define the \emph{status} of an arc $(x,y)$
in a digraph $Z$ to equal 0 if $(x,y)\not\in A(Z)$ and to equal 1 if
$(x,y)\in A(Z)$.  We say that two arcs have 
\emph{matching status} if their status is
equal in $Z$, and say that they have 
\emph{opposite status} otherwise.

\subsection{Processing a 1-circuit}\label{s:1circuit}

Let $S$ be a 1-circuit.  (If $S$ is not raw then $S$ has resulted
from the decomposition of a raw 2-circuit: see Sections~\ref{s:normal} 
and~\ref{s:eccentric}.)
The method for processing a 1-circuit is very similar to 
that used in~\cite{CDG}, and some of the discussion and figures given there
may be helpful.
(See also the worked example in Section~\ref{a:example}.)

Label the 1-circuit as $S = x_0 x_1\ldots x_{2k-1}$
where $k\geq 2$, such that $x_0$ is the minimum vertex on $S$
and $x_1 = \min\{ x_1,\, x_{2k-1}\}$.  Set 
\[ i=\begin{cases} 0 & \text{if $S$ is a forward circuit,}\\
                    1 & \text{if $S$ is a reverse circuit.}
\end{cases}
\]
Also set 
\[ h= \begin{cases} 0 & \text{if $\zeta^i (x_0,x_1)\in A(Z_J)$,}\\
      1 & \text{ otherwise.}
      \end{cases}
      \]
Then
$\zeta^i (x_{2t},x_{2t+1})\in A(\chi^h Z_J)$ and
$\zeta^i (x_{2t+2}, x_{2t+1})\not\in A(\chi^h Z_J)$ for $t=0,1,\ldots, k-1$
(identifying $x_{2t}$ with $x_0$).   Note that
any three consecutive vertices on $S$ are distinct.

Define the set
\begin{align*}
 \mathcal{B} &= \{ t =  1,2,\ldots, k-1 \,\, :\,\,  \zeta^i (x_0,x_{2t+1})\not\in
     A(\chi^h Z_J)  \\
         & \qquad \qquad
	   \mbox{ and } x_{2\ell+1} \neq x_{2t+1} \mbox{ for }
             \ell \mbox{ with } \ell = t+1,\ldots, k-1. \}
\end{align*}
(This definition ensures that exactly one value $t$ is stored for each
distinct vertex $x_{2t+1}$ with $\zeta^i (x_0,x_{2t+1})\not\in A(\chi^h Z_J)$,
ensuring that vertices which are repeated along $S$ are treated correctly.)
Note that $k-1\in\mathcal{B}$ always.
The arcs $\zeta^i (x_0,x_{2t+1})$ are called \emph{odd chords}.
The number of phases in the processing of $S$ will be
$p = |\mathcal{B}|$.  For the first phase, choose the minimum $t\in\mathcal{B}$.
There will be $t$ steps in the first phase, which proceeds as follows:  
\begin{center}
\begin{tabbing}
for $j:= t, \, t-1, \ldots, 1$  do\\
XXX \= \kill
\> form $Z_{J+t-j+1}$ from $Z_{J+t-j}$ by performing the 
switch $\zeta^i\chi^h[x_0x_{2j-1}x_{2j}x_{2j+1}]$;\\
\end{tabbing}
\end{center}
If $t=k-1$ then there is only one phase and the processing of $S$ is complete.
Otherwise, $\zeta^i (x_0, x_{2t+1}) \in A(\chi^h Z_{J+t})$ but all odd
chords $\zeta^i (x_0,x_{2\ell+1})$ with $x_{2\ell+1}\neq x_{2t+1}$
have been reinstated to match their status in $Z_J$ (that is,
they belong to $Z_{J+t}$ if and only if they belong to $Z_J$).

For subsequent phases, if $t$ was the starting point of the previous phase
then choose $q > t$ minimum such that $q\in\mathcal{B}$.  
The odd chord $\zeta^i (x_0,x_{2t+1})$ has been switched in the previous
phase but will be restored to its original state by the end of this phase.
There will be
$q-t$ steps in this phase, performing the sequence of switches
\[ \zeta^i\chi^h[x_0 x_{2q-1} x_{2q} x_{2q+1}],\,\, 
    \zeta^i\chi^h[x_0 x_{2q-3} x_{2q-2} x_{2q-1}],\,\, \ldots
  , \,\, \zeta^i\chi^h[x_0 x_{2t+1} x_{2t+2} x_{2t+3}].\]
Note that every switch involves $x_0$, the start-vertex of $S$.

At any point during the processing of the 1-circuit, at most three odd chords 
have been switched (that is, temporarily disturbed).  This is illustrated 
in the worked example in Section~\ref{a:example}.

\section{Decomposition of a raw 2-circuit }\label{s:2circuit}

We now show how to process a raw 2-circuit $S$, given the method
for processing a 1-circuit described in Section~\ref{s:1circuit}.
Suppose that we have reached the digraph $Z_J$ on the canonical path
from $G$ to $G'$ in $\mathcal{G}$.  Note that $Z_J$
agrees with $G$ on the 2-circuit $S$ before the processing of $S$ begins.
We relabel the vertices on $S$ as 
\begin{equation}
\label{Slabels}
S = v  x_{0,0}   \cdots x_{1,0}  v  x_{1,1} \cdots  x_{0,1}, 
\end{equation}
where $(v,x_{0,0})$ is the lexicographically least arc in $A(S)$.
Treat the indices $(i,j)$ on the vertex labels as elements of
$\mathbb{Z}_2^2$, with addition performed modulo 2.
In the undirected case~\cite{CDG}, the vertices
$x_{0,0}, \, x_{0,1}, \, x_{1,0}, \, x_{1,1}$
are all distinct.  However, this is no longer the case in the directed
setting, which complicates the definition of the canonical paths.
Also note that there may be as few as two vertices between two successive
occurrences of $v$ on the 2-circuit $S$.  This is explained in more
detail below Figure~\ref{2circuit}, once we have introduced some
useful notation.

Recall that $\chi$ is the complementation operation
for digraphs.  Set 
\[ h = \begin{cases} 
        0 & \text{ if the arc $(v,\, x_{0,0})$ is present in $Z_J$,}\\
        1 & \text{ if the arc $(v,\, x_{0,0})$ is absent in $Z_J$.}
        \end{cases}
\]
Then 
\[ (v,\, x_{0,0})\in A(\chi^h Z_J), \,\,\,
   (x_{1,0},\, v)\in A(\chi^h Z_J),\,\,\,
   (x_{1,1},\, v)\not\in A(\chi^h Z_J),\,\,\,
    (v,\, x_{0,1})\not\in A(\chi^h Z_J).
   \]
Figure~\ref{2circuit-lite} depicts $\chi^h S$,
where the curved lines (from $x_{0,0}$ to $x_{1,0}$ and from 
$x_{0,1}$ to $x_{1,1}$)
represent any odd number of alternating arcs.  Solid arcs represent 
arcs which are present in $\chi^h Z_J$ and dashed arcs represent arcs 
which are absent in $\chi^h Z_J$.  That is, if $h=0$ then
solid arcs belong to $Z_J$ and dashed arcs belong to $G'$,
while if $h=1$ then solid arcs belong to $G'$ and dashed arcs belong to
$Z_J$.
\begin{center}
\begin{figure}[ht]
 \psfrag{v}{$v$}\psfrag{x1}{$x_{0,0}$}\psfrag{w1}{$y_{0,0}$}
 \psfrag{z1}{$y_{1,0}$}\psfrag{y1}{$x_{1,0}$}\psfrag{x2}{$x_{1,1}$}
 \psfrag{w2}{$y_{1,1}$}\psfrag{z2}{$y_{0,1}$}\psfrag{y2}{$x_{0,1}$}
\centerline{\includegraphics[scale=0.6]{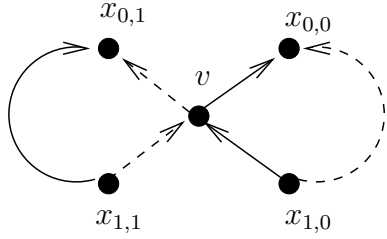}}
\caption{The 2-circuit $\chi^h S$}
\label{2circuit-lite}
\end{figure}
\end{center}
For $(i,j)\in\mathbb{Z}_2^2$, let $y_{i,j}$ be the unique vertex such that 
$v x_{i,j} y_{i,j}$ or $y_{i,j} x_{i,j} v$ is a contiguous substring of $S$
(allowing cyclic wrapping in the case of $y_{0,1}$). 
If $y_{0,j}=x_{1,j}$ for some $j$  then
$y_{1,j} = x_{0,j}$ and there is only one arc between $x_{0,j}$
and $x_{1,j}$.  This means that the corresponding curved line in
Figure~\ref{2circuit-lite} can be replaced by a single arc.  
There are four possibilities for $\chi^h S$ in which 
$y_{0,j}= x_{1,j}$ for $j=1,2$.
These are shown in Figure~\ref{bowtie-family}. 
The leftmost 2-circuit involves 5 distinct vertices
and the middle two 2-circuits each involve 4 distinct
vertices, with one coincidence of the form $x_{0,j}=x_{1,j+1}$,
where $j\in \mathbb{Z}_2$.
The rightmost 2-circuit 
involves 3 distinct vertices:  we will call it  a 
\emph{triangle}.
\begin{center}
\begin{figure}[ht]
 \psfrag{v}{$v$}\psfrag{x1}{$x_{0,0}$}\psfrag{w1}{$y_{0,0}$}
 \psfrag{y1}{$x_{1,0}$}\psfrag{x2}[c]{$x_{1,1}$} \psfrag{y2}[c]{$x_{0,1}$}
\psfrag{w1}[c]{$x_{0,0} = x_{1,1}$}\psfrag{z1}[c]{$x_{0,1} = x_{1,0}$}
\centerline{\includegraphics[scale=0.6]{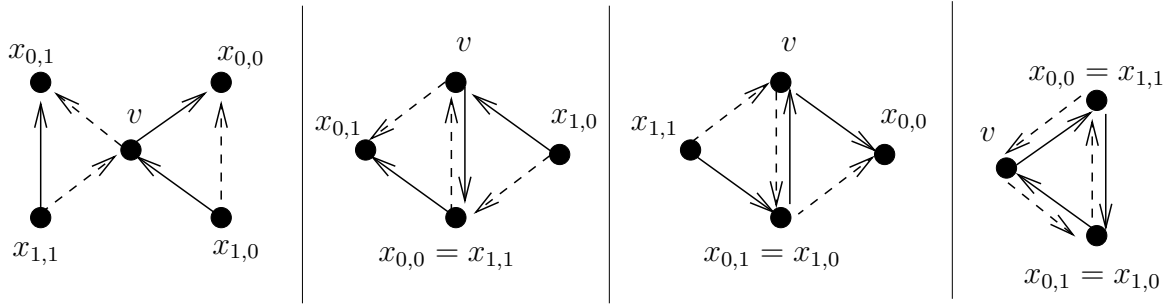}}
\caption{The four 2-circuits $\chi^h S$ with at most 5 distinct vertices.
The rightmost 2-circuit is a triangle.}
\label{bowtie-family}
\end{figure}
\end{center}
In the undirected analysis~\cite{CDG}, a critical observation was
that vertex $y_{0,0}$ must be distinct from $x_{0,1}$ (without loss of
generality).   This fact underpinned the definition
of the canonical paths in~\cite{CDG}.  For directed graphs this
property does not necessarily hold, as can be seen from the 
last two 2-circuits in Figure~\ref{bowtie-family}.

We will say that $S$ is \emph{normal} if $y_{i,j}\neq x_{i,j+1}$
for some $(i,j)\in\mathbb{Z}_2^2$.  
In Section~\ref{s:normal} we describe how to process a normal 2-circuit.
The procedure is analogous to that used in~\cite{CDG},  which is the motivation
for
the definition of normal 2-circuits.   Note that the triangle 
(shown at the rightmost of Figure~\ref{bowtie-family}) is not normal but
the remaining 2-circuits in Figure~\ref{bowtie-family} are normal. 

For $(i,j)\in\mathbb{Z}_2^2$, let $z_{i,j}$ be the unique vertex
such that  $v x_{i,j} y_{i,j} z_{i,j}$ or 
$z_{i,j} y_{i,j} x_{i,j}  v$
is a contiguous substring of $S$ (allowing cyclic wrapping in the case of $z_{0,1}$).  
We will need the following lemma.

\begin{lemma}
Suppose that $S$ is a raw 2-circuit which is not normal and 
such that
$v=z_{i,j}$ for some $(i,j)\in\mathbb{Z}_2^2$. 
Then  $S$ is a triangle.
\label{simplify}
\end{lemma}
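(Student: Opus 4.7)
The plan is to impose coordinates on $S$ and read off the hypotheses directly. Treating $S$ as a forward circuit $w_0 w_1 \cdots w_{2k-1}$ (the reverse case is identical, since $y_{i,j}$ and $z_{i,j}$ are defined by the cyclic vertex sequence and not by arc orientation), the lex-least arc relabeling places $v$ at $w_0$ and at exactly one further position $w_{2s-1}$ with $2 \leq s \leq k-1$, so in particular $k \geq 3$. Reading off the four substrings that define $z_{i,j}$ gives
\[
z_{0,0} = w_3, \quad z_{1,0} = w_{2s-4}, \quad z_{1,1} = w_{2s+2}, \quad z_{0,1} = w_{2k-3}.
\]

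The first step is to collapse the hypothesis $v = z_{i,j}$ into a constraint on $s$: because $v$ occurs only at positions $0$ and $2s-1$ on $S$, a short parity check rules out all but two options, namely $s = 2$ (coming from $v = z_{0,0}$ or $v = z_{1,0}$) and $s = k-1$ (coming from $v = z_{1,1}$ or $v = z_{0,1}$).

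The second step is to unfold the not-normal hypothesis $y_{i,j} = x_{i,j+1}$ for every $(i,j) \in \mathbb{Z}_2^2$ into the four vertex equalities $w_2 = w_{2k-1}$, $w_{2s-3} = w_{2s}$, $w_{2s+1} = w_{2s-2}$ and $w_{2k-2} = w_1$. Substituting $s = 2$ collapses these to $w_1 = w_4 = w_{2k-2}$ and $w_2 = w_5 = w_{2k-1}$, and the decisive move is to note that $(w_4, w_5)$ and $(w_{2k-2}, w_{2k-1})$ are two arc-positions of the circuit $S$ with identical vertex labels $(x_{0,0}, y_{0,0})$. Since the $2k$ arcs of a forward circuit are required to be distinct ordered pairs in the underlying simple digraph, these two positions must actually coincide, forcing $2k - 2 = 4$ and hence $k = 3$. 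The mirror case $s = k-1$ is handled by the same style of argument using the duplicated arc $(y_{0,0}, x_{0,0})$ at positions $(w_2, w_1)$ and $(w_{2k-4}, w_{2k-5})$; alternatively, one applies the converse involution $\zeta$ to $S$ and re-labels to reduce it to $s = 2$.

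Once $k = 3$ with $s = 2$, the six positions of $S$ contain only three distinct vertex labels $v$, $x_{0,0}$ and $y_{0,0}$, each appearing twice, which is precisely the triangle at the right of Figure~\ref{bowtie-family}. The main obstacle I anticipate is the indexing bookkeeping: one needs to be disciplined in distinguishing equality of vertex labels along $S$ (which is permitted, since $S$ is not an induced subdigraph) from equality of arc-positions along the circuit (which is forbidden by the distinctness requirement on the listed arcs); the contradiction that ultimately fixes $k$ lives entirely in the latter.
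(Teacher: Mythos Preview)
Your proof is correct and takes a genuinely different route from the paper's.

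The paper's argument works with the \emph{well-paired property} of raw 2-circuits (successive arcs along $S$ are paired under $\psi$ at their common endvertex) together with an alternating two-colouring of the arcs. After reducing to the case $v=z_{0,0}$, the paper observes that the in-arc $(y_{0,0},v)$ appearing in the substring $v\,x_{0,0}\,y_{0,0}\,v$ must be one of the four arcs of $S$ incident with $v$, and colour-plus-orientation force it to equal $(x_{1,0},v)$; hence $y_{0,0}=x_{1,0}$. The not-normal identities then cascade to collapse all labels onto three vertices, and a final appeal to the well-paired property at $v$ pins down the triangle structure.

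Your approach is more purely combinatorial: you place explicit position coordinates on $S$, reduce $v=z_{i,j}$ to $s\in\{2,\,k-1\}$ by a parity check on the two positions at which $v$ can occur, and then exploit the \emph{distinctness} of the $2k$ listed arcs of a circuit to force $k=3$ (via the duplicated arc $(w_4,w_5)=(w_{2k-2},w_{2k-1})$ in the $s=2$ case). You never invoke the pairing $\psi$ or the colouring directly, only the fact---implicit in the construction of raw 2-circuits in Section~\ref{s:circuit}---that $v$ occurs at exactly two positions of $S$. This makes the role of the ``raw'' hypothesis more transparent: it is used solely to guarantee that $v$ appears exactly twice, not for any deeper pairing information. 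The paper's route, by contrast, is closer in spirit to the surrounding analysis (which leans repeatedly on the colouring and on $\psi$), and arguably reads more naturally in that context.
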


\begin{proof}
Without loss of generality (by reversing arcs and/or taking the complement if necessary)
we may suppose that $v=z_{0,0}$.  
(This means we cannot assume that $(v,x_{0,0})$ is the lexicographically
least arc in $S$, but we do not need to use that property in this proof.)
Colour arcs around the 2-circuit orange, purple in an
alternating fashion, starting with the orange arc $(v,x_{0,0})$. 
By assumption, $S$ has initial substring
$v\, x_{0,0}\, y_{0,0}\, v$.  By the well-paired property of 2-circuits,
we know that the orange arc $(v,x_{0,0})$ is paired with the purple arc
$(y_{0,0},x_{0,0})$ at $x_{0,0}$ under $\psi$, and the purple arc
$(y_{0,0},x_{0,0})$ is paired with the orange arc $(y_{0,0},v)$ at $y_{0,0}$ 
under $\psi$.  
Now $v$ is incident with exactly four arcs of $S$, one of each colour and orientation
(see Figure~\ref{2circuit-lite}.)
Hence the presence of the orange arc $(y_{0,0},v)$ on $S$
shows that $y_{0,0} = x_{1,0}$.  But then we obtain
\[ y_{0,0} = x_{1,0} = y_{1,1} = x_{0,1},\]
as $S$ is not normal.

Now $y_{0,0}=x_{1,0}$ and the purple arc $(y_{0,0},x_{0,0})$ is paired with
the orange arc $(v,y_{0,0})$.  This implies that $x_{0,0}=y_{1,0}$, and since
$S$ is not normal it follows that
\[ x_{0,0} = y_{1,0} = x_{1,1} = y_{0,1}.\]
This gives all pairing information around the 2-circuit
except for pairings at $v$.   (For example, since $y_{0,0}=x_{0,1}$
and $x_{0,0}=y_{0,1}$, we know that the purple arc $(v,y_{0,0})$
is present on $S$ and is paired with the orange arc $(x_{0,0},y_{0,0})$
at $y_{0,0}$.  This arc is paired at $x_{0,0}$ with the purple arc
$(x_{0,0},v)$, since $x_{0,0} = x_{1,1}$ and $y_{0,0}=y_{1,1}$.)
But as $v$ is only incident
with four arcs of $S$ there can be no other vertices involved in $S$. 
So by the well-paired property, at least one of the pairs of arcs 
$(v,x_{0,0}), (v,y_{0,0})$
and $(x_{0,0},v),(y_{0,0})$ must be paired at $v$. 
It follows that $S$ is a triangle on $\{ v, x_{0,0}, y_{0,0}\}$.  
\end{proof}

Call $S$ \emph{eccentric}
if it is not normal and not a triangle.
If $S$ is eccentric then $v\neq z_{i,j}$ for all $(i,j)\in\mathbb{Z}_2^2$, 
by Lemma~\ref{simplify}.  Hence $\chi^h S$ is as shown in 
Figure~\ref{eccentric}.  (Remember that arcs must alternate in both
colour and orientation, giving a unique way to navigate around this figure,
or see Figure~\ref{eccentric-detail} below for an unravelled version.)
Again the curved lines represent an odd
number of alternating arcs (from $x_{0,0}$ to $x_{1,0}$ and
from $x_{0,1}$ to $x_{1,1}$).  Recall also that the vertices $x_{i,j}$ are
not necessarily distinct.
\begin{center}
\begin{figure}[ht]
 \psfrag{v}{$v$}\psfrag{x1}{$x_{0,0}=y_{0,1}$}\psfrag{w1}{$x_{0,1}=y_{0,0}$}
 \psfrag{y1}[r]{$x_{1,0}=y_{1,1}$}\psfrag{z1}[r]{$x_{1,1}=y_{1,0}$}
 \psfrag{p1}{$z_{0,0}$} \psfrag{p2}{$z_{1,1}$}\psfrag{q1}{$z_{1,0}$}
 \psfrag{q2}{$z_{0,1}$}
\centerline{\includegraphics[scale=0.6]{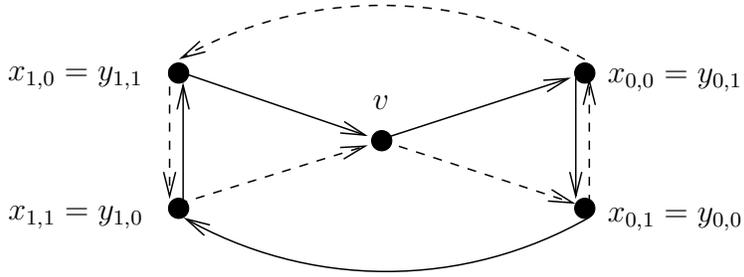}}
\caption{The 2-circuit $\chi^h S$ when $S$ is eccentric}
\label{eccentric}
\end{figure}
\end{center}

We describe how to process an eccentric
2-circuit in Section~\ref{s:eccentric} and in Section~\ref{s:triangle}
we explain how to process a triangle.  This will complete the
description of the canonical path from $G$ to $G'$ corresponding
to the pairing $\psi$.

\subsection{Decomposing a normal 2-circuit}\label{s:normal}

Let $S$ be a normal 2-circuit, with vertices labelled
as in (\ref{Slabels}), where $(v,x_{0,0})$ is the
lexicographically least arc in $A(S)$.
Recall the notation $z_{i,j}$ defined before Lemma~\ref{simplify}.
A normal 2-circuit was depicted in Figure~\ref{2circuit-lite} but now
we need a more detailed picture (Figure~\ref{2circuit}).
Recall however that there can be as few as three arcs in the left or right
half of this figure: for example, if there were only three arcs on the
right then $y_{i,0}=x_{i+1,0}$ and $z_{i,0}=v$ for $i\in\mathbb{Z}_2$.
Again the curved lines
in Figure~\ref{2circuit} represent an odd number of alternating arcs.
\begin{center}
\begin{figure}[ht]
 \psfrag{v}{$v$}\psfrag{x00}{$x_{0,0}$}\psfrag{y00}{$y_{0,0}$}
 \psfrag{y10}{$y_{1,0}$}\psfrag{x10}{$x_{1,0}$}\psfrag{x11}{$x_{1,1}$}
 \psfrag{y11}{$y_{1,1}$}\psfrag{y01}{$y_{0,1}$}\psfrag{x01}{$x_{0,1}$}
 \psfrag{a11}{$z_{1,1}$}\psfrag{a01}{$z_{0,1}$}\psfrag{a10}{$z_{1,0}$}
\psfrag{a00}{$z_{0,0}$}
\centerline{\includegraphics[scale=0.6]{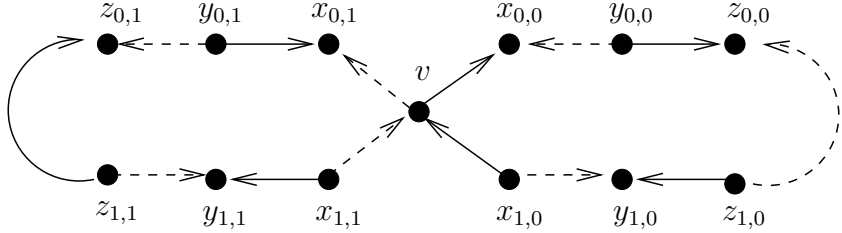}}
\caption{A normal 2-circuit $\chi^h S$, in more detail}
\label{2circuit}
\end{figure}
\end{center}
Let $(i,j)$ be the lexicographically
least index such that $x_{i,j}\neq y_{i,j+1}$.
(Here we use the ordering $0<1$ on $\mathbb{Z}_2$.)
Define the arc $a_{i,j} = (y_{i,j+1},\, x_{i,j})$.  The \emph{shortcut arc}
of $S$ is $\zeta^i a_{i,j}$ (that is, it equals $a_{i,j}$ itself if
$i=0$ and equals the reversal of $a_{i,j}$ if $i=1$).

Suppose that $Z_J$ is the current digraph on the canonical path from $G$ to $G'$
before we start decomposing $S$.
There are three cases, called (Na), (Nb), (Nc), where the `N' stands for `normal'.
\begin{enumerate}
\item[(Na)] the shortcut arc $\zeta^i a_{i,j}$ belongs to $A(S)$.
\item[(Nb)] the shortcut arc $\zeta^i a_{i,j}$ does not belong to $A(S)$, 
  and $\zeta^i a_{i,j}$ is not an arc of $\chi^{h + j} Z_J$.
\item[(Nc)] the shortcut arc $\zeta^i a_{i,j}$ does not belong to $A(S)$,
  and $\zeta^i a_{i,j}$ is an arc of $\chi^{h + j} Z_J$.
\end{enumerate}
We consider these cases in order.  (A more detailed description of the
analogous process in the undirected case, with figures, can be found
in~\cite{CDG} and may also be helpful.)
\begin{enumerate}
\item[(Na)]
In case (Na), the 2-circuit $S$ can be split into two 1-circuits, $S_1$
and $S_2$.   There are four
subcases to consider, depending on which ``half'' of the 2-circuit contains
the shortcut arc and whether the shortcut arc belongs to $Z_J$.
In all subcases, the arcs of $S_1$ and $S_2$ form a partition of the arcs of $S$.

Once the two 1-circuits $S_1$ and $S_2$ have been identified, they are processed 
in that order, 
extending the canonical path from $G$ to $G'$ as
\[ G = Z_0,\ldots, Z_J, Z_{J+1},\ldots, Z_{J+k}\]
after processing $S_1$, and
\[ G = Z_0,\ldots, Z_J, Z_{J+1}, \ldots, Z_{J+k}, Z_{J+k+1},\ldots, Z_{J+k+\ell}\]
after processing $S_2$.
\begin{enumerate}
\item[(Na1)] Suppose that $S$ can be rewritten (allowing cyclic wrapping if necessary)
as
\[ 
v\, x_{i,j+1}, y_{i,j+1}  z_{i,j+1} \cdots y_{i,j+1}  x_{i,j} \cdots 
     z_{i+1,j+1}  y_{i+1,j+1} x_{i+1,j+1}  v  x_{i+1,j} 
        \cdots x_{i,j}\]
and $\zeta^i a_{i,j} \not\in A(\chi^{h+j} Z_J)$.  Split $S$ into two 1-circuits
\begin{align*}
S_1 &= v x_{i,j+1}\,  y_{i,j+1}  z_{i,j+1} \cdots y_{i,j+1}  x_{i,j}, \\
S_2 &= v  x_{i+1,j+1}  y_{i+1,j+1}  z_{i+1,j+1}\cdots x_{i,j}  y_{i,j} 
  \cdots y_{i+1,j}  x_{i+1,j}.
\end{align*}
\item[(Na2)]  Suppose that $S$ can be rewritten (allowing cyclic wrapping if necessary)
as
\[ 
v\, x_{i,j+1}  y_{i,j+1}  z_{i,j+1} \cdots x_{i,j}\,  y_{i,j+1} \cdots 
       z_{i+1,j+1}  y_{i+1,j+1}  x_{i+1,j+1}  v\, 
        x_{i+1,j} \cdots x_{i,j}\]
and $\zeta^i a_{i,j} \in A(\chi^{h+j} Z_J)$. Split $S$ into two 1-circuits
\begin{align*}
S_1 &= v x_{i,j+1}  y_{i,j+1}  z_{i,j+1} \cdots  x_{i,j},\\
S_2 &= v  x_{i+1,j+1}  y_{i+1,j+1}  z_{i+1,j+1}\cdots 
          y_{i,j+1}\,  x_{i,j} y_{i,j} \cdots y_{i+1,j}   x_{i+1,j}.
		 \end{align*}
\item[(Na3)]
Suppose that $S$ can be rewritten (allowing cyclic wrapping if necessary)
as
\[  v x_{i,j+1}\cdots x_{i+1,j+1}  v  x_{i+1,j}  y_{i+1,j}
         z_{i+1,j} \cdots y_{i,j+1}  x_{i,j} 
	  \cdots z_{i,j+1}  y_{i,j+1}  x_{i,j+1} \]
and $\zeta^i a_{i,j} \not\in A(\chi^{h+j} Z_J)$.  Split $S$ into two 1-circuits
\begin{align*}
S_1 &= v  x_{i,j+1}  y_{i,j+1}  x_{i,j}\cdots  
           z_{i,j}  y_{i,j}  x_{i,j},\\
S_2 &= v  x_{i+1,j+1}  y_{i+1,j+1} \cdots y_{i,j+1} \cdots z_{i+1,j}
            y_{i+1,j}  x_{i+1,j}.
\end{align*}
\item[(Na4)]
Suppose that $S$ can be rewritten (allowing cyclic wrapping if necessary)
as
\[  v x_{i,j+1}\cdots x_{i+1,j+1}  v  x_{i+1,j}  y_{i+1,j}  
     z_{i+1,j} \cdots x_{i,j}  y_{i,j+1} \cdots z_{i,j}  y_{i,j}  x_{i,j}\]
and $\zeta^i a_{i,j} \in A(\chi^{h+j} Z_J)$.  Split $S$ into two 1-circuits
\begin{align*}
S_1 &= v  x_{i,j+1}  y_{i,j+1}\cdots z_{i,j}  y_{i,j}  x_{i,j},\\
S_2 &= v  x_{i+1,j+1}  y_{i+1,j+1} \cdots y_{i,j+1} x_{i,j} \cdots 
             z_{i+1,j}  y_{i+1,j}  x_{i+1,j}.
	     \end{align*}
\end{enumerate}
\item[(Nb)]
Now suppose that $S$ is a normal 2-circuit, the shortcut arc $\zeta^i a_{i,j}$ 
is not an arc of $S$
and $\zeta^i a_{i,j}$ is not an arc of $\chi^{h+j} Z_J$.   Then we can 
use the shortcut arc to give
an alternating 4-cycle $v  x_{i,j}  y_{i,j+1}  x_{i,j+1}$.
First process this alternating 4-cycle  using the switch 
$\zeta^i\chi^{h+j} [ v x_{i,j}, y_{i,j+1} x_{i,j+1}]$,
extending the canonical path by one step to give
\[ G = Z_0, \ldots, Z_J, Z_{J+1}. \]
(Call this step the \emph{shortcut switch}.)
Now $\zeta^i a_{i,j}$ is an arc of $\chi^{h+j} Z_{J+1}$ and we
can form a 1-circuit $S_1$ from $S$, specifically
\begin{equation}
\label{normalS1}
S_1 = v  x_{i+1,j+1}  y_{i+1,j+1} \cdots y_{i,j+1}\,  x_{i,j}  
         y_{i,j} \cdots y_{i+1,j}  x_{i+1,j}.
\end{equation}
Process this 1-circuit (as described in Section~\ref{s:1circuit})
to extend the canonical path further, giving
\[ G = Z_0,\ldots, Z_J, Z_{J+1}, Z_{J+2},\ldots, Z_{J+k}.\]
Note that $\zeta^i a_{i,j}$ is not an arc of $\chi^{h + j} Z_{J+k}$
after the 1-circuit $S_1$ has been processed, so it has been
restored to the same state as in $\chi^{h+j} Z_J$, before the processing of
the 2-circuit $S$ began.
\item[(Nc)]  Finally assume that $S$ is a normal 2-circuit, the shortcut
arc $\zeta^i a_{i,j}$ is not an arc of $S$ and $\zeta^i a_{i,j}$ is an arc of 
$\chi^{h + j} Z_J$.   Then the shortcut arc completes the 1-circuit $S_1$ 
defined in (\ref{normalS1}), 
which is processed (as described in Section~\ref{s:1circuit}).
This extends the canonical path to give
\[ G = Z_0, \ldots, Z_J, Z_{J+1}, \ldots, Z_{J+k}.\]
Last we process the alternating 4-cycle $v x_{i,j} y_{i,j+1} x_{i,j+1}$, 
using the shortcut switch $\zeta^i\chi^{h+j} [ v x_{i,j}, y_{i,j+1} x_{i,j+1}]$,
extending the canonical path by one step to give
\[ G = Z_0, \ldots, Z_J, Z_{J+1}, \ldots, Z_{J+k}, Z_{J+k+1}.\]
Note that $\zeta^i a_{i,j}$ is not an arc of $\chi^{h + j} Z_{J+k}$
but it is an arc of $\chi^{h+j} Z_{J+k+1}$, so it has been restored to 
the same state as in $\chi^{h+j} Z_J$.
\end{enumerate}

\subsection{Decomposing an eccentric 2-circuit}\label{s:eccentric}

Now we may assume that $S$ is an eccentric 2-circuit.  
Then $y_{i,j}=x_{i,j+1}$ for all $(i,j)\in\mathbb{Z}_2^2$, by definition,
and $v\neq z_{i,j}$ for all $(i,j)\in\mathbb{Z}_2^2$, by Lemma~\ref{simplify}.
Call $(z_{1,0},\, v)$ the \emph{eccentric arc}.
Note that $z_{1,0}\not\in \{ x_{1,0}, \, x_{1,1}\}$
which is the set of in-neighbours of $v$ on $S$.   
Hence the eccentric arc is never
an arc of $S$, so that the analogue of Case (Na) 
never arises.  The remaining possibilities are below, called Case (Ea)
and (Eb) (these are similar to cases (Nb) and (Nc) for
normal 2-circuits, respectively).
\begin{enumerate}
\item[(Ea)] Suppose that $(z_{1,0},\, v)\not\in A(\chi^{h} Z_J)$.
Then $z_{1,0} x_{1,1} x_{1,0} v$ forms an alternating 4-cycle
which we process using the switch $\chi^h [z_{1,0} x_{1,1} x_{1,0} v]$, 
extending the canonical path by one step to give
\[ G = Z_0,\cdots, Z_J,\, Z_{J+1}.\]
We call this step the \emph{eccentric switch}.
After performing the eccentric switch we have the 2-circuit
\begin{equation}
\label{Seccentric}
S' = v x_{0,0}\cdots z_{1,0} v x_{1,1}  x_{1,0}  \cdots x_{0,0}  x_{0,1}.
\end{equation}
Indeed, since $z_{1,0}\neq x_{1,0}$ it follows that
$S'$ is a normal 2-circuit, which we can process using the method described
in Section~\ref{s:normal}.
This extends the canonical path as
\[ G = Z_0,\cdots, Z_J,\, Z_{J+1}, \, Z_{J+2} \cdots , Z_{J+1 + k}.
\]
Note that $(z_{1,0},\, v)\not\in A(\chi^{h} Z_{J+1+k})$, so the eccentric arc
has been restored to the same state as in $\chi^h Z_J$, before the processing
of $S$ began.
\item[(Eb)]
Suppose that $(z_{1,0},\, v)\in A(\chi^{h} Z_J)$.
Then $S'$ defined in (\ref{Seccentric}) is a normal 2-circuit which we
first process using the method described in Section~\ref{s:normal}.  
This extends the canonical path as
\[ G = Z_0, \cdots, Z_J,\, Z_{J+1},\cdots, Z_{J+k}.\]
Then $z_{1,0} x_{1,1} x_{1,0} v$ forms an alternating 4-cycle
which we process using the eccentric switch 
$\chi^h [z_{1,0} x_{1,1} x_{1,0} v]$, 
extending the canonical path by one step to give
\[ G = Z_0, \cdots, Z_J,\, Z_{J+1},\cdots, Z_{J+k},\, Z_{J+k+1}.\]
Now $(z_{1,0},\, v) \in A(\chi^{h} Z_{J+1+k})$, so the eccentric arc has been 
restored to the same state as in $\chi^h Z_J$.
\end{enumerate}

This procedure for still works even for eccentric
2-circuits with only five vertices.  These arise when $z_{i,j}=x_{i+1,j+1}$
for all $(i,j)\in\mathbb{Z}_2^2$ (matching Figure~\ref{eccentric} with
both curved lines replaced by one arc each.)

The following information will be needed when analysing the flow.

\begin{lemma}
\label{eccentric-plus-shortcut}
Let $S$ be an eccentric $2$-circuit 
with the labelling of (\ref{Slabels})
and let $S'$ be the normal $2$-circuit used to process $S$.   
Suppose that $S'$ falls into case (Nb) or (Nc).
Then the following all hold:
\begin{enumerate}
\item[\emph{(i)}] 
Neither of the arcs $(v,x_{0,1})$, $(x_{1,1},v)$ 
are involved in the eccentric switch.
\item[\emph{(ii)}]
Using the labelling from Figure~\ref{eccentric}, the shortcut arc 
used to process $S'$ is $(z_{1,0},x_{1,0})$ and the shortcut
switch is $[x_{1,1} x_{1,0} z_{1,0} v]$. 
\item[\emph{(iii)}] The
eccentric arc is involved in the shortcut switch and does not lie on the
1-circuit used to process $S'$.  
\end{enumerate}
\end{lemma}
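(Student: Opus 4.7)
The plan is to verify each of the three claims by translating $S'$ into the framework of a normal 2-circuit and comparing labels. From Figure~\ref{eccentric} we have $y_{i,j}=x_{i,j+1}$ for all $(i,j)\in\mathbb{Z}_2^2$ and, by Lemma~\ref{simplify}, $v\ne z_{i,j}$; combined with the fact that any three consecutive vertices on a circuit are distinct (applied once to $x_{1,0},v,x_{1,1}$ at the second occurrence of $v$, and once to the triple $z_{1,0},y_{1,0}=x_{1,1},x_{1,0}$ on the first half of $S$), this forces the four vertices $v,\,x_{1,0},\,x_{1,1},\,z_{1,0}$ to be pairwise distinct. The eccentric switch $\chi^h[z_{1,0}\,x_{1,1}\,x_{1,0}\,v]$ is then the 4-cycle switch on these vertices, touching exactly the arcs $(z_{1,0},x_{1,1})$, $(x_{1,0},v)$, $(z_{1,0},v)$, $(x_{1,0},x_{1,1})$. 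Part (i) follows by inspection: $(v,x_{0,1})$ has tail $v$, but none of the four switch arcs has tail $v$; and $(x_{1,1},v)$ has head $v$ with $x_{1,1}\notin\{x_{1,0},z_{1,0}\}$, whereas the only switch arcs with head $v$ are $(x_{1,0},v)$ and $(z_{1,0},v)$.

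For part (ii), I first show that the labelling (\ref{Slabels}) of $S'$ coincides with that of $S$. The arcs in $A(S')\setminus A(S)$ are the two newly created arcs $(z_{1,0},v)$ and $(x_{1,0},x_{1,1})$. Because $(v,x_{0,0})$ is the lex-least arc of $A(S)$, every tail of an arc of $S$ is at least $v$; in particular $z_{1,0},\,x_{1,0}>v$ (strict, since neither equals $v$). Hence neither new arc is lex-less than $(v,x_{0,0})$, so $(v,x_{0,0})$ remains the lex-least arc of $A(S')$ and the primed labels of $S'$ reduce to $x'_{0,0}=x_{0,0}$, $x'_{1,0}=z_{1,0}$, $x'_{1,1}=x_{1,1}$, $x'_{0,1}=x_{0,1}$. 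Since the portions of $S$ near the endpoints are unchanged in $S'$, the eccentricity relations give $y'_{0,0}=x_{0,1}=x'_{0,1}$ and $y'_{0,1}=x_{0,0}=x'_{0,0}$, while the substring $v\,x_{1,1}\,x_{1,0}$ of $S'$ gives $y'_{1,1}=x_{1,0}$. Hence the lex-least index $(i,j)\in\mathbb{Z}_2^2$ with $x'_{i,j}\ne y'_{i,j+1}$ is $(1,0)$, and the shortcut arc is $\zeta^1(y'_{1,1},x'_{1,0})=(z_{1,0},x_{1,0})$. Expanding the definition of $\zeta^1\chi^h[v\,z_{1,0}\,x_{1,0}\,x_{1,1}]$ using the table from Section~\ref{s:circuit} yields the shortcut switch $[x_{1,1}\,x_{1,0}\,z_{1,0}\,v]$.

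For part (iii), the first assertion is immediate: the four arcs of the shortcut switch form the 4-cycle on $\{v,\,x_{1,0},\,x_{1,1},\,z_{1,0}\}$, which contains the eccentric arc $(z_{1,0},v)$. For the second, the 1-circuit $S_1$ given by (\ref{normalS1}) with primed labels and $(i,j)=(1,0)$ contains $v$ only at its starting position, so the only arcs of $S_1$ incident to $v$ are its first arc $(v,x'_{0,1})=(v,x_{0,1})$ and its closing arc $(v,x'_{0,0})=(v,x_{0,0})$, both outgoing from $v$. Since $(z_{1,0},v)$ is incoming to $v$ and $v$ is one of its endpoints, $(z_{1,0},v)$ cannot lie on $S_1$. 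I expect the main piece of bookkeeping to be the verification in (ii) that the lex-least arc of $A(S')$ still equals $(v,x_{0,0})$, so that the primed labelling inherits from the original; once that is in hand, (i) and (iii) follow readily from distinctness around the second occurrence of $v$.
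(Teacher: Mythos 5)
Your proof is correct and follows essentially the same route as the paper's: identify the induced labelling of $S'$, locate the shortcut arc at index $(1,0)$, and observe that the eccentric arc lies in the shortcut 4-cycle rather than on the 1-circuit $S_1$; your explicit check that $(v,x_{0,0})$ remains the lexicographically least arc of $A(S')$ is a detail the paper leaves implicit under ``by choice of the eccentric switch''. One harmless slip: $(x_{1,0},x_{1,1})$ is the arc $(x_{1,0},y_{1,0})$ of $S$ and is removed from the unprocessed structure by the eccentric switch rather than added, so $A(S')\setminus A(S)$ consists of the eccentric arc alone --- but since you only use that neither candidate arc is lexicographically below $(v,x_{0,0})$, the argument is unaffected.
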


\begin{proof}
Recall that the eccentric arc is $(z_{1,0},v)$.
The first statement is immediate as the eccentric switch processes
the alternating 4-cycle $z_{1,0} x_{1,1} x_{1,0} v$.

For the remainder of the proof, we use 
labels $\hat{x}_{i,j}, \hat{y}_{i,j},\ldots$
to denote the labelling of $S'$ obtained as in (\ref{Slabels}).
See Figure~\ref{eccentric-detail}.
As $S$ is eccentric we have $y_{i,j+1}=x_{i,j}$ for all 
$(i,j)\in\mathbb{Z}_2^2$.
By choice of the eccentric switch we have $\hat{x}_{i,j}= x_{i,j}$ and 
$\hat{y}_{i,j}= y_{i,j}$ for $(i,j)\neq (1,0)$, while
$\hat{x}_{1,0} = z_{1,0}$.
\begin{center}
\begin{figure}[ht]
\psfrag{v}[c]{$v$}
 \psfrag{x00}{$x_{0,0}$}\psfrag{x01}{$x_{0,1}$}\psfrag{x10}{$x_{1,0}$}
 \psfrag{x11}{$x_{1,1}$} \psfrag{a00}{$z_{0,0}$} \psfrag{a01}{$z_{0,1}$}
\psfrag{a10}{$z_{1,0}$} \psfrag{a11}{$z_{1,1}$}
\psfrag{aa10}{$z_{1,0} = \hat{x}_{1,0}$}
\centerline{\includegraphics[scale=0.6]{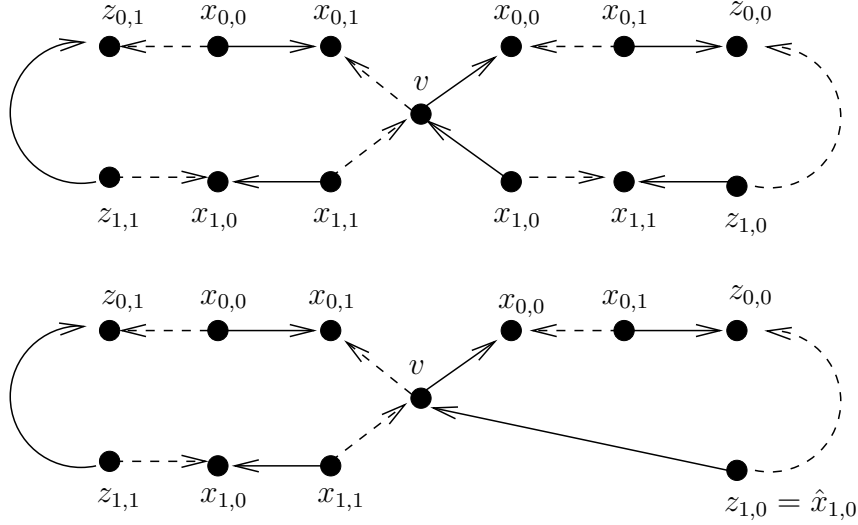}}
\caption{An eccentric 2-circuit $\chi^h S$ (above) and the normal 2-circuit 
$\chi^h S'$ used to process it (below)}
\label{eccentric-detail}
\end{figure}
\end{center}
Now
$z_{1,0}\neq x_{1,0}$ since $z_{1,0} x_{1,1} x_{1,0}$ is a contiguous substring of $S$.
Hence $(1,0)$ is the lexicographically least $(i,j)$ such that
$\hat{x}_{i,j}\neq \hat{y}_{i,j+1}$. 
It follows that the shortcut arc is $(\hat{x}_{1,0},\hat{y}_{1,1})
= (z_{1,0},x_{1,0})$.  Notice that the eccentric arc is incident 
with the shortcut arc at $z_{1,0}$ (with the same orientation).  
Furthermore, the shortcut switch involves
a switch to the alternating 4-cycle
\[ v \hat{x}_{1,1} \hat{y}_{1,1} \hat{x}_{1,0} = v x_{1,1} x
_{1,0} z_{1,0} \]
which includes the eccentric arc.
Specifically, the switch is $[x_{1,1} x_{1,0} z_{1,0} v]$,  
proving (ii).
Since the eccentric arc $(z_{1,0},v)$ is one of the arcs involved
in the shortcut switch, 
it does not lie on the 1-circuit used to process $S'$. 
This establishes (iii), completing the proof.
\end{proof}

\subsection{Processing a triangle }\label{s:triangle}

Now suppose that $S$ is a triangle, with vertices labelled $v_0,v_1,v_2$
where $v_0$ is the least vertex on $S$ and $(v_0,v_1)$ is an arc in 
the current digraph $Z_J$.
Define the sets $\C^{(i,j)} = 
\C^{(i,j)}(\{v_0,v_1,v_2\},Z_J)$ for
${(i,j)}\in \mathbb{Z}_2^2$. 
There are two cases, depending on whether a useful neighbour of $S$
exists. 
\begin{enumerate}
\item[(T1)]
First suppose that there exists a useful neighbour of $S$.
Let $x$ be the minimum useful neighbour of $S$, and set
$(i,h)$ according to the first condition in this list which is
satisfied by $x$:
\[ (i,h) = \begin{cases} (0,0) & \text{ if $x$ is an out-neighbour of
                                      exactly one vertex of $S$,}\\
                         (0,1) & \text{ if $x$ is an out-neighbour of 
			              exactly two vertices of $S$,}\\
             (1,0) & \text{ if $x$ is an in-neighbour of exactly one
	                           vertex of $S$,}\\
             (1,1) & \text{ if $x$ is an in-neighbour of exactly two
	                       vertices of $S$.}
			      \end{cases} \]
Then the sequence of three switches given by 
LaMar~\cite[left half of Figure 2]{lamar}
can be used to process $S$.  For completeness we describe these switches
here. Relabel the vertices of the triangle with $a$, $b$, $c$ so that
\begin{itemize}
\item $\zeta^i (a,x)\in A(\chi^h Z_J)$,
\item $\zeta^i (b,x)\not\in A(\chi^h Z_J)$, $\zeta^i (c,x)\not\in A(\chi^h Z_J)$,
\item $\zeta^{i}(a,b), \zeta^{i}(b,c), \zeta^{i}(c,a)\in A(\chi^h Z_J)$.
\end{itemize}
(Once $x,i,h$ are chosen using the above procedure, the labelling of 
the triangle is uniquely determined.) 
Then the sequence of switches
\[ \zeta^i \chi^h [axbc],\qquad \zeta^i \chi^h [bxca],\qquad 
           \zeta^i \chi^h [abcx]\]
processes the triangle and restores all arcs between $x$ and the
triangle to their original state.  See Figure~\ref{useful-nb}
for the case $(i,h)=(0,0)$: the diagram for the other cases
can be obtained by reversing all arcs if $i=1$, and/or by exchanging 
solid lines and dashed lines if $h=1$.
Call arcs $\zeta^i (a,x)$, $\zeta^i (b,x)$,
$\zeta^i (c,x)$ the \emph{auxilliary arcs}.
\begin{center}
\begin{figure}[ht]
 \psfrag{v0}{$a$}\psfrag{v1}{$b$}\psfrag{v2}{$c$}
 \psfrag{x}{$x$}
\centerline{\includegraphics[scale=0.6]{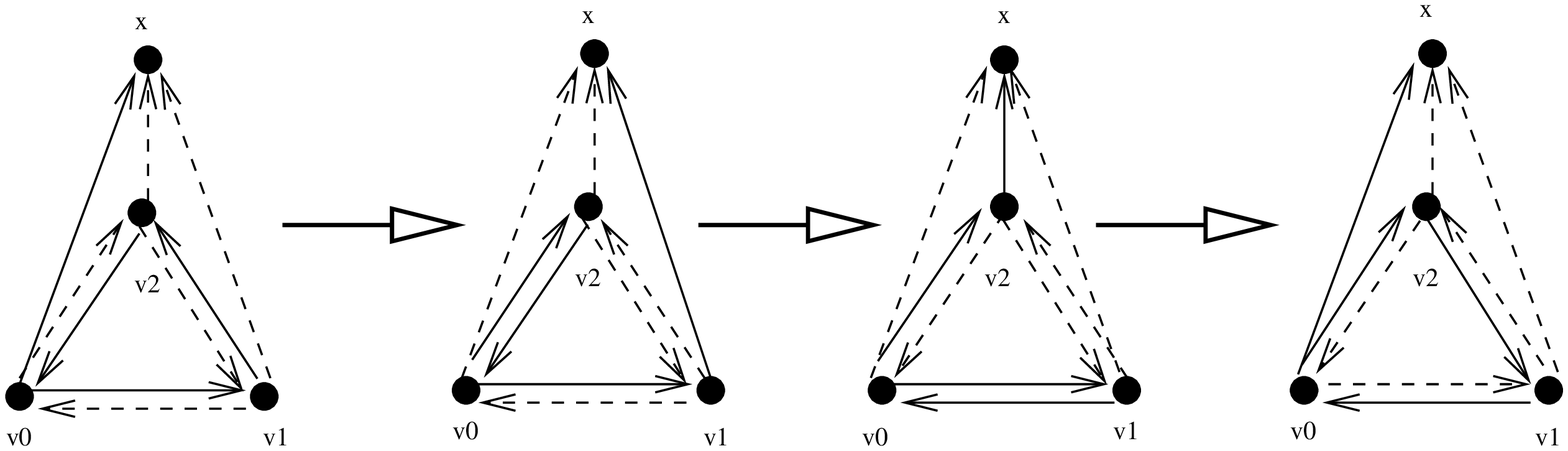}}
\caption{Processing a triangle using a useful neighbour}
\label{useful-nb}
\end{figure}
\end{center}
Use this sequence of 
switches to process the triangle, extending the canonical path as
\[ G = Z_0,\ldots, Z_J, \, Z_{J+1},\, Z_{J+2},\, Z_{J+3}.\]
\item[(T2)]
Suppose that there is no useful neighbour of $S$ in $Z_J$.
Then using Lemma~\ref{useful}, there must exist a useful arc
for $S$.  Let $(x,y)$ be the lexicographically least such arc.
Recall that $(x,y)$ satisfies one of the properties (U1), (U2)
given just before Lemma~\ref{useful}.  Define
\[ h = \begin{cases} 0 & \text{ if (U1) holds,} \\
                     1 & \text{ if (U2) holds.}
\end{cases}
\]
Then $(x,y)\in A(\chi^h\, Z_J)$ with  
$x\in\C^{(h,h)}\cup\C^{(h,h+1)}$ and
$y\in\C^{(h,h)}\cup\C^{(h+1,h)}$. 
Relabel the vertices of the triangle as $a$, $b$, $c$, where
$a=v_0$ and $(a,b)\in A(\chi^h Z_J)$.  (Once $h$ is defined,
this labelling is completely determined.)
The sequence of switches given by LaMar~\cite[right side of Figure 2]{lamar}
will be used to process $S$.  For completeness we give this
sequence of switches in our notation:
\[ \chi^h [x y a b], \quad  \chi^h [a y b c], \quad
   \chi^h [b y c a],\quad \chi^h [x b c y].\]
These switches are also displayed in Figure~\ref{useful-arc} in
the case that $h=0$:  the diagram for $h=1$ can be obtained by
exchanging solid lines and dashed lines.
The arcs
$(x,y)$, $(x,b)$, $(a,y)$, $(b,y)$, $(c,y)$ are called
\emph{auxilliary arcs}. 
Use this sequence of switches to process the triangle, extending
the canonical path as
\[ G = Z_0,\ldots, Z_J, \, Z_{J+1}, \, Z_{J+2}, \, Z_{J+3}, \, Z_{J+4}.\]
\begin{center}
\begin{figure}[ht]
 \psfrag{v0}{$a$}\psfrag{v1}{$b$}\psfrag{v2}{$c$}
 \psfrag{x}{$x$}\psfrag{y}{$y$}
\centerline{\includegraphics[scale=0.6]{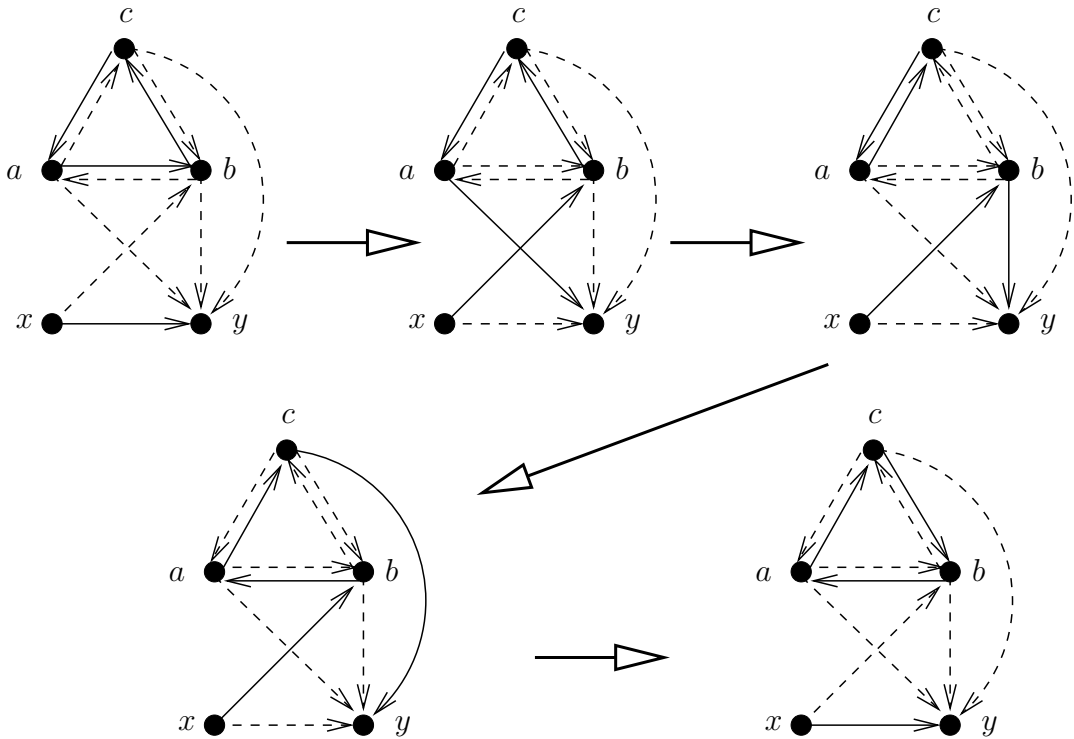}}
\caption{Processing a triangle using a useful arc}
\label{useful-arc}
\end{figure}
\end{center}
\end{enumerate}

\section{Analysing the flow}\label{s:analysis}

We now analyse the multicommodity flow so that we can apply
Lemma~\ref{second-eigval} to give a bound on the second-largest
eigenvalue of the switch chain.
In this section we assume that $1\leq d = d(n)\leq n/2$ for all $n$.
This implies the general result for any $(d(n))$, by complementation
where necessary.  

Fix a pairing $\psi\in\Psi(G,G')$ and let $\gamma_\psi(G,G')$ be
the canonical path from $G$ to $G'$ with respect to $\psi$.
Let $(Z,Z')$ be any transition on $\gamma_\psi(G,G')$, and 
let $S$ be the raw 1-circuit 
or raw 2-circuit which is currently being processed.  
(That is, the transition $(Z,Z')$ is performed while processing $S$.)
Let $Z_J$ be the digraph on the canonical
path from $G$ to $G'$ just before the processing of $S$ began.
Any arc which does not belong to $S$ but which has distinct status
in $Z$ and $Z_J$ is called an \emph{interesting arc}
for $Z$ with respect to $(G,G',\psi)$.
(That is, the arc does not belong to $S$ but is present in $Z$ but
absent in $Z_J$, or vice-versa.)
The only arcs that can be interesting are:
\begin{itemize}
\item odd chords which are switched while processing a 1-circuit,
\item the shortcut arc and/or eccentric arc, switched while processing a 
normal or eccentric 2-circuit,
\item auxilliary arcs which are switched while processing a triangle.
\end{itemize}
We will label an interesting arc by $-1$ (respectively, 2)
if it is absent (respectively, present) in $Z_J$
but present (respectively, absent) in $Z$.   (The reason for this
choice of labels will be made clear shortly.)

Interesting arcs play a key role in our analysis.
The following lemma describes the possible subdigraphs of $Z$ that
can be formed by interesting arcs in $Z$.
It proves that the labelled digraph consisting of the interesting arcs
is a subdigraph of one of the eight labelled digraphs shown in 
Figure~\ref{f:zoo}, up to symmetries.
Here $\{ \mu, \nu\} = \{ -1, 2\}$ and $\{ \xi, \omega\} = \{ -1, 2\}$
independently, giving four symmetries obtained by exchanging these pairs.
Furthermore, $\zeta$ may also be applied to reverse the orientation
of all arcs.  Hence each digraph shown in Figure~\ref{f:zoo} represents
up to eight possible digraphs.  Note, the label for a given arc is shown
next to the head of that arc.
\begin{center}
\begin{figure}[ht]
\psfrag{a}{$\mu$}\psfrag{b}{$\nu$}
\psfrag{z}{$\xi$} \psfrag{x}{$\omega$}
\centerline{\includegraphics[scale=0.5]{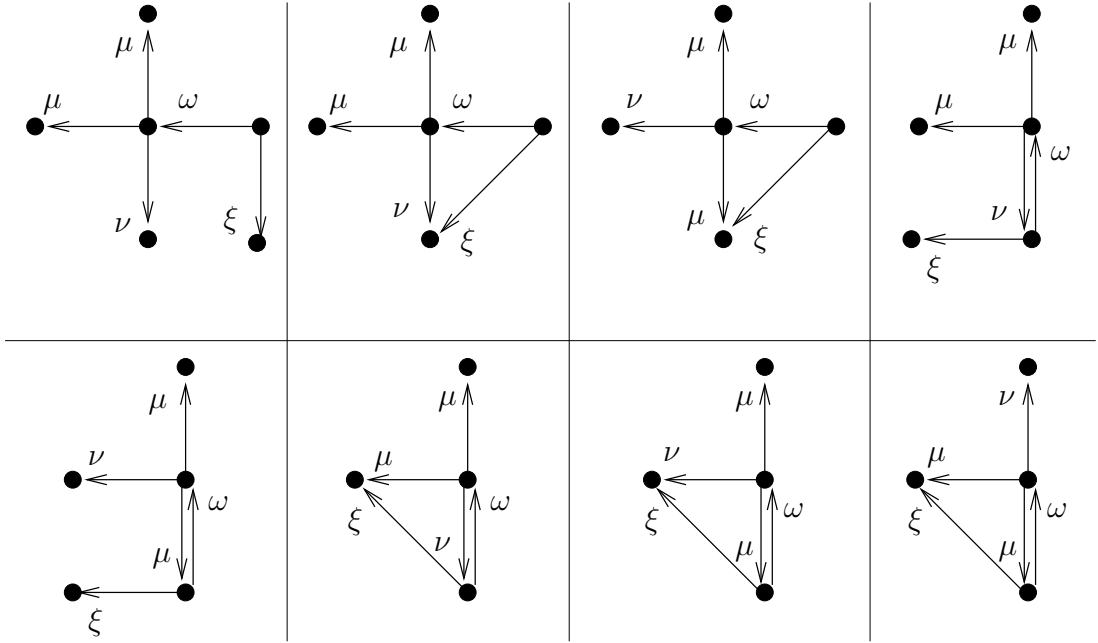}}
\caption{Possible configurations of interesting arcs, up to symmetries}
\label{f:zoo}
\end{figure}
\end{center}

\begin{lemma}
Let $Z$ be a digraph which lies on the canonical path from $G$ to $G'$
with respect to the pairing $\psi\in\Psi(G,G')$.
There are at most five interesting arcs in $Z$ with respect to
$(G,G',\psi)$.  The digraph
consisting of the interesting arcs in $Z$ 
is a subdigraph of one of the digraphs in Figure~\ref{f:zoo}.
If there are five interesting arcs then the following statements all hold:
\begin{enumerate}
\item[\emph{(i)}] There exists a vertex $w$ which
is the head (respectively, tail) of three interesting arcs,
and these three
interesting arcs do not all have the same label.  
\item[\emph{(ii)}] There is a fourth interesting arc which has $w$ as tail 
(respectively, head).  Let $u$ be the head (respectively, tail) of the fourth 
interesting arc.
\item[\emph{(iii)}] The fifth interesting arc is not incident with $w$ but
has $u$ as its head (respectively, tail).
\end{enumerate}
\label{zoo}
\end{lemma}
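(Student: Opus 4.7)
The plan is a case analysis based on which kind of substructure $S$ is being processed at the transition $(Z,Z')$: a raw 1-circuit, a normal 2-circuit, an eccentric 2-circuit, or a triangle. In each case I would identify the set of interesting arcs, verify that the induced labelled subdigraph is a subgraph of one of the eight configurations in Figure~\ref{f:zoo}, and count to show at most five are possible. The arguments for 1-circuits, normal 2-circuits, and triangles can be handled by direct bookkeeping; the eccentric 2-circuit case is the source of all five-arc configurations and requires the most care.

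First, if $S$ is a raw 1-circuit, the analysis of the processing in Section~\ref{s:1circuit} shows that at any transition at most three odd chords have been temporarily disturbed, and they all share the start vertex $x_0$ as a common endpoint, with orientation determined by $i$. Since all odd chords in $\mathcal{B}$ are, by definition, absent in $\chi^h Z_J$, they carry a common label when interesting, yielding a star of at most three arcs. If $S$ is a normal 2-circuit, case (Na) reduces to 1-circuits processed in sequence, while cases (Nb)/(Nc) additionally contribute the shortcut arc $\zeta^i a_{i,j}$, whose endpoints are both distinct from $v$; this gives at most four interesting arcs formed by a star at $v$ together with a disjoint shortcut arc. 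For a triangle processed via a useful neighbour (T1) or a useful arc (T2), I would trace through the three or four switches in order, noting the interesting arcs after each switch: every configuration that arises involves at most three interesting arcs forming either a star at the useful neighbour $x$ or a short path through $x$ and $y$.

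The heart of the proof is the eccentric 2-circuit case. Using Lemma~\ref{eccentric-plus-shortcut}, I would note that when the inner normal 2-circuit $S'$ falls into case (Nb) or (Nc), the shortcut arc for $S'$ is $(z_{1,0}, x_{1,0})$ and the shortcut switch involves the eccentric arc. By tracking the sequence of switches in cases (Ea) and (Eb), the moments at which both the eccentric arc and shortcut arc are simultaneously disturbed can be located; at such a moment, during the processing of the inner 1-circuit, up to three odd chords sharing $v$ as endpoint may additionally be disturbed, producing the maximum of five interesting arcs. The structural claims (i)--(iii) would then be verified by identifying the central vertex $w$ that is incident with four of the five arcs, together with the auxiliary vertex $u = z_{1,0}$ that carries the shortcut arc off of $w$.

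The main obstacle is the bookkeeping in the eccentric 2-circuit case, in particular verifying the label condition in (i): since the three simultaneously disturbed odd chords carry a common label, the three arcs at $w$ claimed in (i) cannot be taken to be the three odd chords, and must instead involve the eccentric arc, whose label will differ from that of the odd chords because it was flipped by a different switch. The argument therefore requires a careful subcase analysis based on the orientation of the inner 1-circuit, the relative positions of $z_{1,0}$ and the odd chord endpoints, and possible vertex coincidences among $v$, $z_{1,0}$, $x_{1,0}$, and the heads of the odd chords. Once these structural details are resolved, matching each resulting configuration to one of the eight in Figure~\ref{f:zoo} is routine.
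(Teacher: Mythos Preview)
Your case analysis framework matches the paper's, but there is a genuine error in your understanding of how odd chords are labelled, and this error propagates into your treatment of the five-arc case.

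You claim that since odd chords in $\mathcal{B}$ are absent in $\chi^h Z_J$, the three disturbed odd chords ``carry a common label when interesting.'' This is false. During the processing of a phase starting at $t\in\mathcal{B}$, the first switch $\zeta^i\chi^h[x_0 x_{2t-1} x_{2t} x_{2t+1}]$ disturbs two odd chords: $\zeta^i(x_0,x_{2t+1})$ was absent in $\chi^h Z_J$ and becomes present, while $\zeta^i(x_0,x_{2t-1})$ (for $t\geq 2$) was \emph{present} in $\chi^h Z_J$ (since $t-1\notin\mathcal{B}$ by minimality of $t$) and becomes absent. These two interesting odd chords therefore carry \emph{opposite} labels. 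In general the three disturbed odd chords consist of one or two from $\mathcal{B}$ together with intermediate chords not in $\mathcal{B}$, so they do not all share a label. The paper's proof uses precisely this fact: the three odd chords already satisfy the ``not all the same label'' condition in (i).

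Because of this, your identification of the five-arc configuration is inverted. In the paper's argument, $w=v$ and the three arcs in (i) are the three odd chords, all with consistent orientation at $v$; the fourth arc in (ii) is the \emph{eccentric} arc, which is incident with $v$ but with the \emph{opposite} orientation (this is the key structural point, following from Lemma~\ref{eccentric-plus-shortcut}); and $u=z_{1,0}$ is the other end of the eccentric arc, with the shortcut arc $(z_{1,0},x_{1,0})$ meeting it there. Your plan to force the eccentric arc into the triple in (i) because ``its label will differ from that of the odd chords'' is unnecessary and would not work, since the eccentric arc has the wrong orientation at $v$ to sit among the three in (i). A smaller point: the shortcut arc in the normal 2-circuit case is not necessarily disjoint from the odd-chord star; it may share one or two endvertices with the odd chords (though never $v$), and this is needed to match against the non-star configurations in Figure~\ref{f:zoo}.
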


\begin{proof}
While processing a triangle, at most three interesting arcs are used, 
namely the two or three auxilliary arcs.
It follows from Figures~\ref{useful-nb},~\ref{useful-arc}
that the auxilliary arcs always form subdigraphs of a configuration
from Figure~\ref{zoo}.  

When processing a normal 2-circuit, the situation is very similar
to that in~\cite{CDG}, with at most four interesting arcs.   Up to three
interesting arcs arise from the processing of a 1-circuit.  
They are all odd chords, and hence are all incident
with the start-vertex of the 1-circuit with consistent orientation.
However, they do not all have the
same label.  The fourth interesting arc corresponds to the shortcut arc,
which may be labelled $-1$ or 2 and may be incident with none, one or two 
of the other interesting arcs (but not incident
with the start vertex of the 1-circuit).  

The fifth possible interesting arc is the eccentric arc, in the case that we
are processing an eccentric 2-circuit $S$.  Let $S'$ be the normal 2-circuit
containing the eccentric arc which is used to process $S$.
If $S'$ falls into case (Na) then the eccentric arc may be a 
interesting arc for part (either the start or end) of the processing of $S_1$, 
the 1-circuit which contains it.    But the configuration of interesting
arcs in
this case looks just the same as those which may arise from the processing
of a normal 1-circuit, since the eccentric arc is involved in either the first
switch of the last phase or the last switch of the first phase, and
hence plays the same role as an interesting arc left over from a previous phase.
However, if $S'$ falls into case (Nb) or (Nc) then
by Lemma~\ref{eccentric-plus-shortcut} (iii),
the eccentric arc does not lie on the 1-circuit $S_1$ which arises from $S'$.
But the eccentric arc be an interesting arc throughout the processing of $S_1$.
Hence $S_1$ may have up to five interesting arcs, including the 
shortcut arc and
the eccentric arc.  In this case the eccentric arc is incident with the 
start-vertex $v$
of $S_1$ (which equals the start-vertex of $S$) and it has the \emph{opposite}
orientation to the other interesting arcs incident with $v$, if any.  
Let $u$ be the endvertex of the eccentric arc which is not $v$.
If the shortcut arc is present then it must be incident
with the eccentric arc at $u$, with consistent orientation.
This completes the proof.
\end{proof}

Now identify a digraph with its $n\times n$ adjacency matrix (which has
zero diagonal),  and define the $n\times n$ matrix $L$ by $L+Z=G+G'$.
Entries of $L$ belong to $\{ -1,\, 0,\, 1,\, 2\}$.  We may also think of
$L$ as the complete digraph on $[n]$
with each arc labelled by the corresponding entry of $L$.
An arc in $L$ is called \emph{bad} if its label is $-1$ or 2.
Note that $L$ is independent of $\psi$.  Call $L$ an \emph{encoding}
for $Z$ with respect to ($G,G')$.  Note that an arc receives label
$-1$ if it is absent in both $G$ and $G'$ but is present in $Z$,
while an arc receives label 2 if it is present in both $G$ and $G'$ but
is absent in $Z$.  Thus arcs in the symmetric difference $G\triangle G'$
are never bad arcs.  Furthermore, every bad arc is an interesting arc,
and an interesting arc is bad if and only if it does not belong to
the symmetric difference $G\triangle G'$.  This observation will be
used many times in our analysis.  In particular, it means that the
digraph of bad arcs in an encoding $L$ for $Z$ is a subdigraph of
one of the digraphs in Figure~\ref{f:zoo}.  This also explains
our choice of labels for interesting arcs, since a bad arc
with label 2 (respectively, $-1$) is also an interesting arc with label 2 
(respectively, $-1$).

In the undirected setting~\cite[Lemma 1]{CDG} it is always possible to
uniquely recover $(G,G')$ if $(Z,Z')$, $L$ and $\psi$ are known.
We prove a slightly weaker result in the directed setting. 

\begin{lemma}
Given $(Z,Z')$, $L$, $\psi$, there are at most four possibilities for
$(G,G')$ such that $(Z,Z')$ is a transition along the canonical path
from $G$ to $G'$ corresponding to $\psi$ and $L$ is an encoding for
$Z$ with respect to $(G,G')$. 
\label{notquiteunique}
\end{lemma}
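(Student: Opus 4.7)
The plan is to reconstruct $(G,G')$ progressively from $(Z,Z',L,\psi)$, identifying at each stage what is forced and what choices remain. First, the defining identity $L=G+G'-Z$ gives $G+G'=L+Z$ entry by entry. Positions where $L+Z$ equals $2$ are the arcs of $G\cap G'$, positions equal to $0$ are the non-arcs of $G\cup G'$, and positions equal to $1$ are the arcs of the symmetric difference $H=G\triangle G'$. So the unlabelled arc set $H$ is determined, and only the blue/red colouring of $H$ remains in question.

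Since the circuit decomposition algorithm of Section~\ref{s:circuits} uses only the arc set of $H$ and the pairing data $\psi$, and was explicitly noted there to be independent of the colouring, the ordered sequence $(C_1,\ldots,C_s)$ is determined. To locate the circuit $S$ currently being processed I would combine the four arcs involved in the switch $(Z,Z')$ with the set of bad arcs recorded by the entries $-1$ and $2$ of $L$. By Lemma~\ref{zoo} these bad arcs form a sub-configuration of one of the patterns in Figure~\ref{f:zoo}, and comparing against the processing rules of Sections~\ref{s:1circuit}--\ref{s:triangle} pins down $S\in\{C_1,\ldots,C_s\}$ together with the stage of its processing (which phase of a 1-circuit, or which sub-case (Na)--(Nc), (Ea)--(Eb), or (T1)--(T2)).

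Once $S$ and its stage are identified, the colouring of every other circuit is forced: each $C_j$ appearing before $S$ in the ordered decomposition is fully processed in $Z$, so its red arcs are precisely those present in $Z$; each $C_j$ appearing after $S$ is untouched, so its blue arcs are precisely those present in $Z$. The circuit $S$ itself admits exactly two alternating 2-colourings, and the identified stage picks one out once the ``role'' of $S$ in the processing rule is fixed. What is \emph{not} forced by the data is the pair of $\mathbb{Z}_2$-flags $(i,h)$ (the $\zeta$-orientation flag and the $\chi$-complementation flag) that govern every processing rule: distinct $(G,G')$ that differ only through these flags can produce identical $(Z,Z',L,\psi)$, yielding at most $2\cdot 2=4$ candidate ordered pairs.

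The main obstacle is the last point: a careful case analysis across the six processing modes is needed to verify that no further ambiguity survives and that the Klein-four action of $\langle\zeta,\chi\rangle$ is the only source of non-uniqueness. The most delicate cases are the eccentric 2-circuit and the triangle with useful arc, where up to five interesting arcs can appear simultaneously; there one has to invoke Lemma~\ref{eccentric-plus-shortcut} and the explicit geometry of Figures~\ref{useful-nb} and~\ref{useful-arc} to rule out reconstructions outside this fourfold orbit.
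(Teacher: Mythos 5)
There is a genuine gap, and it lies exactly at the point where the factor of four enters. You assert that the element $S$ currently being processed, together with its stage, can be ``pinned down'' from the switch arcs and the bad arcs of $L$, and that the residual fourfold ambiguity comes instead from the two flags $(i,h)$. Both claims are wrong. The flags $i$ and $h$ are not free parameters: $i$ is determined by whether $S$ is a forward or reverse circuit (which is fixed by the colour-independent decomposition), and $h$ is determined by the status of a specific arc of $S$ in the current digraph. More decisively, two pairs $(G,G')$ related by complementation or arc reversal have different sums $G+G'$ and different symmetric differences, so they cannot both be consistent with the given $L$ and $Z$; the Klein-four orbit you describe is not a source of ambiguity at all. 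The actual source of the factor four is the step you claim to have disposed of: the transition $(Z,Z')$ involves four switch arcs, at least one of which lies on the element $S$ of the decomposition currently being processed, and since the elements of the decomposition are pairwise arc-disjoint this yields at most one candidate for $S$ per switch arc, hence at most four candidates in total. One cannot in general do better (this is precisely why the directed result is weaker than the unique recovery in the undirected case), and the remainder of the proof must show that for each \emph{fixed guess} of $S$ the pair $(G,G')$ is uniquely reconstructible.

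A secondary issue: your rule for colouring the circuits other than the one containing $S$ --- ``red arcs are precisely those present in $Z$'' for already-processed circuits, and dually for unprocessed ones --- fails for arcs of those circuits that happen to be interesting arcs, i.e.\ arcs temporarily disturbed during the processing of $S$. Since the interesting arcs form an acyclic configuration (Figure~\ref{f:zoo}), each other circuit contains at least one non-interesting (``helpful'') arc whose status in $Z$ does reliably determine its colour, and the colouring then propagates by alternation; this detour is needed to make the reconstruction sound. Finally, the delicate per-case analysis you defer (identifying odd chords, shortcut and eccentric arcs, useful neighbours or useful arcs, and the switched/unswitched sections of a 1-circuit) is not an optional refinement but the substance of the uniqueness argument once $S$ is guessed.
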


\begin{proof}
The matrix $G+G'$ equals $Z+L$. 
From this matrix we can identify all arcs which are present in both
$G$ and $G'$ (entries with value 2 in $G+G'$) and all arcs which are absent
in both $G$ and $G'$ (entries with value 0 in $G+G'$).  We can also identify 
the symmetric difference $H=G\triangle G'$, corresponding to entries 
with value 1 in $G+G'$.  
It remains to assign colours blue and red to the arcs
of $H$ so that blue arcs come from $G$ and red arcs come from $G'$.

From the uncoloured version of $H$ together with $\psi$ we can construct
the circuit decomposition $\mathcal{C}$.
Let $\mathcal{S}$
be the sequence of raw 1-circuits and raw 2-circuits obtained by
decomposing the circuits in $\mathcal{C}$ in order, as described
in Section~\ref{s:circuit}.  The elements of $\mathcal{S}$ are 
pairwise arc-disjoint and their union is $H$.

Suppose that the transition $(Z,Z')$ deletes the arcs
$(\alpha,\beta)$, $(\delta,\gamma)$ and replaces them with 
$(\alpha,\gamma)$, $(\delta,\beta)$.
Call $(\alpha,\beta), (\alpha,\gamma), (\delta,\beta), (\delta,\gamma)$ 
the \emph{switch arcs}.
We classify transitions along the canonical paths into three types as follows:
\begin{enumerate}
\item[] {\bf Type 1:} the transition is any step in the processing of
a 1-circuit used to process $S\in\mathcal{S}$. 
At least one of the switch arcs belong to $S$.
(This includes the case of a raw 1-circuit, in which case the 1-circuit
equals $S$.)
\item[] {\bf Type 2:}  the transition is a shortcut switch or
an eccentric switch used while processing the normal or eccentric
2-circuit $S\in \mathcal{S}$.
At least two of the switch arcs belong to $S$.
\item[] {\bf Type 3:} the transition is a step in the processing of
a triangle $S\in\mathcal{S}$.  
At least one of the four switch arcs belong to $S$.
\end{enumerate}
In all cases, at least one of the switch arcs belongs to the
element $S\in\mathcal{S}$ currently being processed.  Therefore,
there are at most four possiblities for $S$, namely, at most one
possibility for each switch arc.  (This follows as elements of
$\mathcal{S}$ are pairwise arc-disjoint.)

Now fix one of the (at most four) possibilities for $S$.
We will show that given this choice (or guess) for $S$,
we can uniquely determine $(G,G')$ by colouring the edges of $H$.
Note that if $S$ is a 2-circuit, its labelling
(as in Figure~\ref{2circuit})
can be determined uniquely.   Hence
we can determine whether $S$ is normal, eccentric or a triangle.

Furthermore, in the first two cases we can
identify exactly which arcs will be used as odd chords, shortcut arcs or
eccentric arcs during the processing of $S$.
We now claim that if $S$ is a triangle then we can
uniquely determine the useful neighbour $x$ or the useful arc
$(x,y)$ which is used to process $S$, and hence identify all auxilliary
arcs used while processing $S$.  To see this, note that 
when processing
a triangle, each switch involves either two or three vertices of the
triangle.   If all three vertices of
the triangle are involved in the switch then the other vertex is either
a useful neighbour, or an endvertex of a useful arc.  
Fix one orientation around the triangle and call it ``clockwise'',
with the opposite orientation called ``anticlockwise''.
Consider the number of clockwise
and anticlockwise arcs on the triangle in $Z$ and $Z'$: if they are equal
in $Z$ or in $Z'$ then we are using a useful arc 
and otherwise we are using a useful neighbour.  
(See Figures~\ref{useful-nb},~\ref{useful-arc}.) 
In the latter situation it is easy to identify the useful neighbour $x$: 
it is the only
vertex involved in the switch which does not belong to $S$.
This determines the auxilliary arcs (their orientation matches
the orientation of the switch arcs at $x$).
Now suppose that we are using an useful arc $(x,y)$. 
Then we are in case (T2), which means that no useful neighbour
of $S$ existed at the start of processing $S$.   Then $y$ is the only
vertex incident with the switch arcs which does not belong to $S$,
and $x$ and $y$
are the unique vertices in $Z$ which are useful neighbours of $S$.
That is, $x$ and $y$ are the only vertices not in $S$ which
do not belong to the set $\cup_{(i,j)\in\mathbb{Z}_2^2}
\C^{(i,j)}(S,Z)$.   If only two
vertices of the triangle are involved in the switch then the unique
switch arc which is not incident with either of these vertices is
the useful arc, and the switch is the first or last in processing
$S$.  This shows that
all auxilliary arcs for $S$ can be identified, as claimed.

Suppose that $S$ comes from the decomposition of the circuit 
$C_r\in\mathcal{C}$.   
The digraph induced by all interesting arcs contains no 
circuits, as can be seen from Figure~\ref{f:zoo}.  
Hence for any $\ell\neq r$ we can find at least
one arc on $C_\ell$ which is not an interesting arc
for $S$:  call this a \emph{helpful arc} for $C_\ell$.  
Colour the helpful arc for $C_\ell$ blue if it does not belong
to $Z$ and $\ell < r$, or if it does belong to $Z$ and $\ell>r$; otherwise
colour it red.  Then the colouring of the rest of $C_\ell$ is forced,
since colours alternate around the circuit.
In the same way we can assign colours to the
arcs of every raw 1-circuit and raw 2-circuit obtained in
the decomposition of $C_r$, other than the element $S\in\mathcal{S}$
(which we have assumed is) being switched in the current transition $(Z,Z')$.
It remains to explain how to assign colours to the arcs of $S$.

If $S$ is a triangle then $(Z,Z')$ is a Type 3 transition.
By observing the number of clockwise and anticlockwise arcs in $Z$ and
$Z'$, we can determine the orientation of the triangle in $G$ and in
$G'$ and hence assign colours to the arcs in $S$.  

Hence for the remainder of the proof we can assume that $S$ is
either a 1-circuit or a normal or eccentric
2-circuit.  Therefore the vertices $\alpha, \beta, \gamma, \delta$ 
all belong to $S$ and
without loss of generality $\alpha = \min\{ \alpha,\beta,\gamma,\delta \}$ 
is the start-vertex
of $S$ (since the start-vertex is involved in every switch).
The argument for 1-circuits and normal 2-circuits is very
similar to that given in~\cite{CDG}.

First suppose that $(Z,Z')$ is a Type 1 transition, performed while
processing the 1-circuit $S'$. 
Now  $S'$ may be a raw 1-circuit (in which case $S'=S\in\mathcal{S}$), 
or $S'$ may have arisen while processing a raw (normal or eccentric) 
2-circuit $S$.  Hence $S'$ may contain a shortcut arc
(but note, no 1-circuit contains an eccentric arc, by 
Lemma~\ref{eccentric-plus-shortcut}).  The arcs of
$S'$ can be partitioned into sections, separated from each other by
two consecutive arcs that are either both in $Z$ or both absent from $Z$.
Each section contains
at least two arcs, so at least one arc which is not the shortcut arc.
Then at least
one arc of $S'$ is actually switched in the current transition, which allows
us to label the section containing that arc as switched, and alternately
label the remaining sections around $S'$ as switched or unswitched.
Then colour an arc of $S'$ blue if it belongs to $Z$ and is unswitched or it is
absent from $Z$ and is switched, and colour an arc of $S'$ red if it belongs
to $Z$ and is switched or it is absent from $Z$ and is unswitched.
Finally, if $S'$ is not raw but arose from a 2-circuit $S$, there is
a unique way to colour the remaining arcs of $S$, keeping the
colours alternating.

For the remainder of the proof we assume that $(Z,Z')$ is Type 2 transition 
for $S$; that is, a shortcut switch or an eccentric switch.  
Let $Z_J$ denote the digraph on the canonical path from $G$ to $G'$
just before we start decomposing $S$.
We consider three subcases.

Firstly, suppose that $(Z,Z')$ is an eccentric switch. Then we know that
the arcs $(v,x_{0,1})$ and $(x_{1,0},x_{1,1})$ have the same
status in $Z_J$.  The former is not involved in the eccentric switch,
by Lemma~\ref{eccentric-plus-shortcut} (i), while the latter is involved
in the eccentric switch.  Hence if these two arcs have matching
status in $Z$ then we are in 
Case (Ea) and the current transition is the first in processing $S$.
Colour the arcs of $S$ according to $Z$:  arcs of $S\cap Z$ 
should be coloured blue and the remaining arcs of
$S$ should be coloured red.
If these arcs have opposite status in $Z$ then we are in case (Eb)
and the current transition is the last in processing $S$.
Colour the arcs of $S$ according to $Z'$:  arcs of $S\cap Z'$
should be coloured red and the remaining arcs of $S$ should be
coloured blue.

We proceed similarly if $(Z,Z')$ is a Type 2 transition for $S$ which is
a shortcut switch.   For now, assume that $S$ is a normal 2-circuit,
so the shortcut switch does not involve the eccentric arc (if any).
If the shortcut arc
is $\zeta^i (y_{i,j+1},x_{i,j})$ then the arcs $\zeta^i(x_{i+1,j},v)$
and $\zeta^i (v,x_{i,j})$ have matching status 
in $Z_J$.  The former arc is not involved in the shortcut switch but the latter arc is.
Hence if these two arcs still have matching status in $Z$ then we are in 
Case (Nb) and the current transition is the first in processing $S$.
Colour the arcs of $S$ according to $Z$, as described in the 
previous
paragraph.  If these two arcs have opposite status in $Z$ (one absent
and one present)
then we have already processed the 1-circuit using the shortcut arc,
so we are in case (Nc) and the current transition is the last in
processing $S$.  Colour the arcs of $S$ according to $Z'$, as
described in the previous paragraph.

The third subcase is that $(Z,Z')$ is a shortcut switch 
which also involves an eccentric arc.  Then $S$ is an eccentric 2-circuit
which has been decomposed into an eccentric switch and a normal 
2-circuit $S'$, where $S'$ contains the eccentric arc.  The current
transition is the shortcut switch which has arisen while processing
$S'$.    
Now the arcs $(v, x_{0,1})$ and $(x_{1,1},v)$
have matching status in $Z_J$. 
From Lemma~\ref{eccentric-plus-shortcut} (i) we know that
neither of these arcs are involved in the eccentric switch.
The former arc is not involved in the shortcut switch but the latter arc is,
by Lemma~\ref{eccentric-plus-shortcut} (ii).
Hence, these two arcs also have matching status at the start of processing
$S'$, and we 
can colour the arcs of $S$ according to $Z$ if these arcs have matching
status in $Z$, and colour the arcs of $S$ according to $Z'$ otherwise.  
This completes the proof.
\end{proof}

Let $L(\alpha,\beta)$ denote the label of arc $(\alpha,\beta)$ 
in the encoding $L$.  
The arc-reversal operator $\zeta$ acts on an 
encoding $L$ by mapping $L$ to its transpose $\zeta L = L^T$.  
If $\zeta^i L(\alpha,\beta)=2$ and $\zeta^i L(\alpha,\gamma)=-1$ for some
$i\in \{0,1\}$ then $(i,\alpha,\beta,\gamma)$ is called a
\emph{handy tuple} with \emph{centre}
$\alpha$.  If $(i,\alpha,\beta,\gamma)$ is handy and at 
most one of $\beta$, $\gamma$ is the head, when $i=0$ (respectively, the 
tail, when $i=1$)
of two bad arcs with distinct labels then $(i,\alpha,\beta,\gamma)$ is
said to be \emph{very handy}.
We now collect together some structural information about bad arcs in encodings.

\begin{lemma}
\label{structure}
Given $G, G'\in \Omega_{n,d}$ with symmetric difference $H = G\triangle G'$,
suppose that $Z$ is a digraph on the canonical path from $G$ to $G'$
with respect to some pairing $\psi$.  Let $L$ be the corresponding
encoding, defined by $L+Z=G+G'$.
Then the following statements all hold.
\begin{enumerate}
\item[\emph{(i)}]  Viewed as the arcs of a labelled digraph, the set of 
bad arcs in $L$ forms a subdigraph of one of the digraphs given in 
Figure~\ref{f:zoo}.
\item[\emph{(ii)}] If $L$ contains a handy tuple then $L$ contains 
a very handy tuple.
\item[\emph{(iii)}] If there are five bad
arcs in $L$ then there exists a very handy tuple $(i_1,\alpha_1,\beta_1,\gamma_1)$, 
and a handy tuple
$(i_1,\alpha_2,\beta_2,\gamma_2)$ in $L$ such that $\alpha_1\neq \alpha_2$
and
\begin{equation}
\label{independent}
\{ \zeta^{i_1} (\alpha_1,\beta_1),\, \zeta^{i_1} (\alpha_1,\gamma_1)\}
   \cap \{ \zeta^{i_2} (\alpha_2,\beta_2),\, \zeta^{i_2} (\alpha_2,\gamma_2) \} 
               = \emptyset.
\end{equation}
\item[\emph{(iv)}] If there are four bad arcs in $L$ then there is at 
                             least one handy tuple in $L$.
\item[\emph{(v)}] If $d=1$ then 
no arc in $H$ is incident with a bad arc with label $2$.
If $L$ has a bad arc with label $2$ which is not the only bad arc in $L$
then $L$ has a handy tuple and $L$ has at most three bad arcs, 
exactly one of which has label $2$.
Furthermore, if $L$ has three bad arcs, exactly one of which has label
$2$ then each endvertex of the bad arc with label $2$ is the centre of
a handy tuple in $L$. 
\item[\emph{(vi)}] If $d=2$ then every vertex which has nonzero degree in
$H$ is the head of at most one bad arc with label $2$ and is the
tail of at most one bad arc with label $2$.
\end{enumerate}
\end{lemma}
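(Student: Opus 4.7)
The plan is to dispatch each part by structural reasoning about bad arcs, relying on the characterisation of interesting arcs given in Lemma~\ref{zoo}. Part (i) is essentially a rephrasing: as observed in the text preceding the lemma, every bad arc is an interesting arc (the bad arcs are precisely the interesting arcs that do not lie in $H$), so the subdigraph of bad arcs is a subdigraph of one of the configurations in Figure~\ref{f:zoo}, proving (i).

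For parts (ii)--(iv) I would work directly against Figure~\ref{f:zoo}. For (ii), suppose $(i,\alpha,\beta,\gamma)$ is handy but not very handy, so both $\beta$ and $\gamma$ receive (in the $\zeta^i$-orientation) two bad arcs of opposite labels. This concentrates bad arcs locally and restricts the possible configuration; inspecting each case I would identify an alternative pair of bad arcs with common tail or head that does satisfy the very-handy condition. For (iii), Lemma~\ref{zoo}(i)--(iii) places us in the five-arc configurations and guarantees a vertex $w$ incident (in some orientation) to three bad arcs not all of one label; this yields a handy tuple at $w$ which, by (ii), may be upgraded to a very handy tuple. The fourth and fifth interesting arcs from parts (ii)--(iii) of Lemma~\ref{zoo} pass through a distinct vertex $u$ and furnish a second handy tuple; the disjointness condition (\ref{independent}) follows because $u\neq w$ and the fifth interesting arc avoids $w$. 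For (iv), inspection of Figure~\ref{f:zoo} shows that every four-bad-arc subdigraph contains some vertex incident (in the $\zeta^i$-orientation) to both a label-$2$ and a label-$-1$ bad arc.

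Part (v) uses the rigidity of $d=1$. A bad arc $(v,w)$ with label $2$ lies in $G\cap G'$; since $d=1$ it is the unique out-arc at $v$ and the unique in-arc at $w$ in both $G$ and $G'$, and therefore no arc of $H$ meets either endpoint. This proves the first sentence. The remaining interesting arcs at $v$ or $w$ can only arise through switching operations whose $1$- or $2$-circuits involve an $H$-arc incident to $v$ or $w$, which we have just excluded; hence any further bad arc in $L$ must either be isolated from $(v,w)$ or adjacent to it through the Figure~\ref{f:zoo} configuration. A direct inspection of the possibilities then yields the claimed cap of at most three bad arcs, the uniqueness of the label-$2$ arc, and the existence of a handy tuple at each endpoint of $(v,w)$ in the three-bad-arc case.

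The most delicate statement is (vi). For $d=2$ and a vertex $u$ with nonzero $H$-degree, write $\theta_u$ and $\phi_u$ for the paired blue/red in- and out-degrees at $u$; then the in-degree of $u$ in $G\cap G'$ is $2-\theta_u$ and its out-degree is $2-\phi_u$. Since every label-$2$ bad arc lies in $G\cap G'$, the bound on the number of label-$2$ bad arcs into (respectively, out of) $u$ is immediate whenever $\theta_u\geq 1$ (respectively, $\phi_u\geq 1$). The remaining off-diagonal case, in which only one of $\theta_u,\phi_u$ is nonzero, is where the main work lies: one must exhibit a clash between two label-$2$ bad arcs sharing head $u$ (or tail $u$) and the $H$-arcs forced at $u$ by its positive $H$-degree. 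I expect this to be the main obstacle, resolved by enumerating the Figure~\ref{f:zoo} configurations in which two label-$2$ arcs share a common endpoint and showing that in each such configuration the other bad arcs and the $H$-arcs through $u$ cannot be accommodated together.
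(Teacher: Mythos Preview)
Your treatment of (i)--(iv) matches the paper's approach: (i) is the observation that bad arcs are interesting arcs, and (ii)--(iv) are extracted by inspection of Figure~\ref{f:zoo}.

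For (v) and (vi), however, you are missing the key structural fact that the paper uses, and your proposed workarounds do not close the gap. The paper's proof rests on one observation: for every kind of interesting arc (odd chord, shortcut arc, eccentric arc, auxiliary arc in case (T1)), each endpoint lies on the $1$- or $2$-circuit currently being processed, so the head (respectively tail) of the bad arc is also the head (respectively tail) of an arc of $G-G'$ and of an arc of $G'-G$. The sole exception is the useful arc $(x,y)$ in case~(T2), whose endpoints need not lie on the triangle. From this, (vi) is immediate for $d=2$: if $u$ is the tail of any bad arc other than the (T2) useful arc then $\phi_u\ge 1$, so $u$ has at most $d-\phi_u\le 1$ out-arcs in $G\cap G'$, hence at most one label-$2$ bad out-arc; and there is only one (T2) useful arc. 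The same observation forces any label-$2$ bad arc with $d=1$ to be the (T2) useful arc, after which the remaining claims in (v) follow by inspecting Figure~\ref{useful-arc}.

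Your approach to (vi) does not reach this. In the ``off-diagonal'' case $\theta_u\ge 1$, $\phi_u=0$, the $H$-arcs at $u$ are all in-arcs, so there is no direct degree clash with two label-$2$ out-arcs at $u$; and the Figure~\ref{f:zoo} configurations carry no information about $H$-arcs at all, so the enumeration you propose cannot produce the contradiction. Similarly, your argument for the first sentence of (v) only shows that a label-$2$ bad arc $(v,w)$ blocks $H$ out-arcs at $v$ and $H$ in-arcs at $w$; it does not address the other orientations, and for the later sentences of (v) you never isolate the (T2) case, which is the only way a label-$2$ bad arc can arise when $d=1$.
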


\begin{proof}
Every bad arc is interesting, so (i) follows immediately from Lemma~\ref{zoo}.
Statements (ii)--(iv) follow from (i), by inspection of Figure~\ref{f:zoo}.
Now the head (respectively, tail) of a bad arc is also the head
(respectively, tail) of an arc in $G-G'$ and an arc in $G'-G$, 
unless the bad arc is the useful arc used to process a triangle in case
(T2).  This follows from the definition of odd chords, shortcut arc,
eccentric arc and auxilliary arcs in 
Sections~\ref{s:1circuit},~\ref{s:normal}--~\ref{s:triangle}.
Hence (vi) and the first statement of (v) holds,
since a bad arc with label 2 is present in both $G$ and $G'$.
Furthermore, inspection of Figure~\ref{useful-arc}
shows that the remaining statements of (v) hold, completing the proof.
\end{proof}

The notion of an encoding is now generalised to mean any $n\times n$
matrix $L$ with entries in $\{ -1, 0, 1, 2\}$ such that every row
and column sum equals $d$.
Given $Z\in \Omega_{n,d}$, an encoding $L$ is called
\emph{$Z$-valid} if every entry of $L+Z$ 
belongs to $\{ 0,1,2\}$ and $L,Z,H$ satisfy
statements (i)--(vi) of Lemma~\ref{structure}, where $H$ 
is the digraph defined by the entries of $L+Z$ which equal 1.  
We also define the set $\W(L)$ of all bad arcs in $L$ by
\[ \W(L) = \{ (i,j)\in [n]^2 \mid L(i,j)\in \{ -1,2\}\}.\]

\begin{lemma}
Let $Z\in\Omega_{n,d}$ and let $L$ be a $Z$-valid encoding. 
Suppose that $L'$ is another encoding such that
$\W(L')\subseteq \W(L)$.  Then $L'$ is also $Z$-valid.
\label{Zvalid}
\end{lemma}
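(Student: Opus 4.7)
The plan is to verify the two defining conditions of $Z$-validity for $L'$: (a) every entry of $L'+Z$ lies in $\{0,1,2\}$, and (b) $L', Z, H'$ satisfy conditions (i)--(vi) of Lemma~\ref{structure}, where $H'$ is the digraph formed by the entries of $L'+Z$ equal to $1$.

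The crucial preliminary step is to show that $L'(i,j)=L(i,j)$ for every $(i,j)\in\W(L')$. Since $\W(L')\subseteq\W(L)$ and $L$ is $Z$-valid, we have $L(i,j)\in\{-1,2\}$, and the constraint $L(i,j)+Z(i,j)\in\{0,1,2\}$ forces $L(i,j)=-1$ when $Z(i,j)=1$ and $L(i,j)=2$ when $Z(i,j)=0$, so the label of $L$ at each bad position is determined by $Z$. Since $L'(i,j)\in\{-1,2\}$ too, a ``flipped'' label would contribute $\pm 3$ to the zero-sum row~$i$ of $L'-L$; outside $\W(L)$ both matrices take values in $\{0,1\}$, so any compensating difference must occur at another position of $\W(L)$ in row~$i$, and likewise for column~$j$. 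The extremely restricted shape of $\W(L)$ guaranteed by Lemma~\ref{structure}(i)---at most five bad arcs, arranged as one of the eight configurations in Figure~\ref{f:zoo}---then allows a short case analysis, combining the row and column sum conditions on $L'$, to rule out consistent flips and establish the claim. Once label-matching is in hand, condition~(a) follows immediately: outside $\W(L')$ both $L'(i,j)$ and $Z(i,j)$ lie in $\{0,1\}$; inside $\W(L')$ the sum equals $L(i,j)+Z(i,j)\in\{0,1,2\}$ by the $Z$-validity of~$L$.

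For condition~(b) I would treat (i)--(vi) in turn, each argument reducing to the observation that the labelled bad-arc digraph of $L'$ is a labelled subdigraph of that of $L$. Condition~(i) is immediate, since a subdigraph of a Figure~\ref{f:zoo} configuration is still such a subdigraph. For (ii)--(iv), any handy tuple of $L'$ is already a handy tuple of~$L$, and the very handy or auxiliary handy tuples supplied by Lemma~\ref{structure} applied to~$L$ can be verified by inspection of Figure~\ref{f:zoo} to involve only arcs that remain in $\W(L')$; shrinking the bad-arc set only relaxes the ``very handy'' condition, so these tuples continue to witness the required properties for~$L'$. Conditions~(v) and~(vi), which concern $d=1$ and $d=2$, pass to~$L'$ because the set of label-$2$ bad arcs of $L'$ is a subset of that of~$L$; although $H'$ may contain arcs at positions in $\W(L)\setminus\W(L')$ that $H$ does not, these positions still lie within the small Figure~\ref{f:zoo} configuration of~$\W(L)$ and so cannot create new incidences violating~(v) or~(vi).

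The main obstacle will be the preliminary label-matching step. Row- and column-sum constraints alone do not immediately rule out flipping the label of a bad entry while keeping $\W(L')\subseteq\W(L)$; it is only the tight combinatorial bound on the shape of $\W(L)$ from Lemma~\ref{structure}(i), combined with the sum conditions applied to both rows and columns at a flipped position, that actually forces the labels of $L$ and $L'$ to coincide on $\W(L')$. Once this is done, the rest of the proof is essentially bookkeeping about how the conditions (i)--(vi) behave when the bad-arc digraph shrinks.
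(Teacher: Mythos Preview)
Your instinct that label-matching on $\W(L')$ is the crux is exactly right, and you are more careful than the paper, whose proof simply \emph{asserts} ``If $L'(i,j)=-1$ then $L(i,j)=-1$'' without argument. Unfortunately your proposed justification for label-matching does not work. You write that a flipped label contributes $\pm 3$ to row~$i$ of $L'-L$ and that ``any compensating difference must occur at another position of $\W(L)$''. That second clause is false: outside $\W(L)$ both $L$ and $L'$ take values in $\{0,1\}$, so $L'-L$ can equal $\pm 1$ there, and three such entries in row~$i$ (and independently three in column~$j$) are enough to absorb a single $\pm 3$. The restricted shape of $\W(L)$ from Lemma~\ref{structure}(i) gives you no leverage here, because the compensation happens entirely outside $\W(L)$.

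In fact the lemma as stated is false. Take $n\ge 6$, $d=3$, any $Z\in\Omega_{n,d}$ with $Z(1,2)=0$, and any encoding $L$ with a single bad entry $L(1,2)=2$ (such $L$ exist by an easy Gale--Ryser count, and with one bad arc and $d=3$ conditions (i)--(vi) are trivially satisfied, so $L$ is $Z$-valid). Now take any encoding $L'$ with a single bad entry $L'(1,2)=-1$ (again these exist). Then $\W(L')=\W(L)=\{(1,2)\}$, yet $L'(1,2)+Z(1,2)=-1\notin\{0,1,2\}$, so $L'$ is not $Z$-valid. Thus no amount of case analysis will rescue the label-matching step, and the paper's one-line proof is equally broken.

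The repair is to strengthen the hypothesis: assume that $L'$ agrees with $L$ on $\W(L')$ (equivalently, the \emph{labelled} bad-arc digraph of $L'$ is a labelled subdigraph of that of $L$). Under this assumption your argument for condition~(a) is immediate, and your treatment of (i)--(vi) goes through as you outline (indeed (ii)--(iv) follow from (i) by inspection of Figure~\ref{f:zoo}, so only (i), (v), (vi) need checking). This strengthened hypothesis holds in the only place the lemma is invoked, namely Lemma~\ref{fix}, since each $(-1,2)$-, $2$-, or $(-1)$-switch there alters four non-bad entries by $\pm 1$ and leaves the labels of all surviving bad entries unchanged.
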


\begin{proof}
If $L'(i,j) = -1$ then $L(i,j)=-1$ and hence $Z(i,j)=1$, as $L$ is $Z$-valid.
Similarly, if $L'(i,j)=2$ then $L(i,j)=2$ and hence $Z(i,j)=0$.
This shows that every entry of $L'+Z$ belongs to $\{ 0,1,2\}$.
Checking properties (i)--(vi) of Lemma~\ref{structure} we see that
they all hold for $L',Z,H'$, completing the proof.
\end{proof}

Switches can be applied to encodings, as follows.  
By definition, the sum of
all labels on arcs with head $v$ add up to $d$, and the sum of all labels
on arcs with tail $v$ add up to $d$, for all vertices $v$.
If $x,y,z,w$ are vertices with $L(x,y) > -1$, $L(w,z) > -1$,
$L(x,z) < 2$ and $L(w,y) < 2$ then we may perform the switch
$[xywz]$ by decreasing
$L(x,y)$ and $L(w,z)$ by one and increasing $L(x,z)$ and $L(w,y)$ by one,
giving a new encoding $L'$.  

\begin{lemma}
Let $Z\in\Omega_{n,d}$. 
Given a $Z$-valid encoding, one can obtain a digraph (with no bad arcs) using
at most three switches.
\label{fix}
\end{lemma}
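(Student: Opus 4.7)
The plan is to show that any $Z$-valid encoding $L$ with $\W(L)\ne\emptyset$ admits a switch producing a new encoding $L'$ with $\W(L')\subsetneq\W(L)$, and that this reduction removes at least two bad arcs per step, except possibly when finishing a $3$-bad-arc configuration in which case all three are removed at once. By Lemma~\ref{Zvalid}, the resulting $L'$ is automatically $Z$-valid, so the procedure may be iterated. Since Lemma~\ref{structure}(i) and Figure~\ref{f:zoo} give $|\W(L)|\le 5$ and no allowed configuration has a single bad arc, three iterations suffice to reach $\W(L)=\emptyset$. The final encoding is a $0$--$1$ matrix with row and column sums all equal to $d$ (switches preserve both sums) and so represents a digraph in $\Omega_{n,d}$.

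To construct the switch, I would first apply Lemma~\ref{structure}(ii)--(v) to locate a very handy tuple $(i,\alpha,\beta,\gamma)$ in $L$, satisfying $\zeta^i L(\alpha,\beta)=2$ and $\zeta^i L(\alpha,\gamma)=-1$; after possibly replacing $L$ by $\zeta L$ we may assume $i=0$. I then seek a vertex $w\in[n]\setminus\{\alpha,\beta,\gamma\}$ with $L(w,\beta)\in\{-1,0\}$ and $L(w,\gamma)\in\{1,2\}$, and perform the switch $[\alpha\beta w\gamma]$. The four affected entries change by $\pm 1$: $L(\alpha,\beta)$ drops from $2$ to $1$, $L(\alpha,\gamma)$ rises from $-1$ to $0$, $L(w,\beta)$ rises into $\{0,1\}$, and $L(w,\gamma)$ drops into $\{0,1\}$. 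Hence no new bad arc is created, the two bad arcs $(\alpha,\beta)$ and $(\alpha,\gamma)$ are removed, and any bad arcs at $(w,\beta)$ or $(w,\gamma)$ are removed as a bonus, giving $\W(L')\subseteq\W(L)\setminus\{(\alpha,\beta),(\alpha,\gamma)\}$.

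The main obstacle is showing that a suitable $w$ always exists. I expect this to follow from two complementary column sum identities: column $\beta$ sums to $d$ with $L(\alpha,\beta)=2$ already present, so its remaining $n-2$ entries sum to $d-2$ and only a small number of them can equal $1$ (forbidden), while column $\gamma$ sums to $d$ with $L(\alpha,\gamma)=-1$ already present, so its remaining entries sum to $d+1$ and many must be $\ge 1$ (desired). The very-handy hypothesis then ensures that at most one of columns $\beta,\gamma$ also contains bad arcs of the opposite label, sharply limiting the overlap of forbidden $w$'s in the two columns; the explicit shape constraints of Figure~\ref{f:zoo} supply the final combinatorial bound. The corner cases $d=1$ and $d=2$ will need separate treatment using Lemma~\ref{structure}(v)--(vi); in particular for $3$-bad-arc configurations the single residual bad arc after removing two via the handy tuple must be absorbed into the switch at $w$ (otherwise $|\W(L')|=1$, which is incompatible with Figure~\ref{f:zoo}), forcing the needed third removal.
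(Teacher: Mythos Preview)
Your proposal has two genuine gaps.

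First, and most seriously, you assume that a (very) handy tuple always exists whenever $\W(L)\ne\emptyset$. Lemma~\ref{structure}(ii) is conditional: it only says that \emph{if} $L$ contains a handy tuple then it contains a very handy one. Parts (iii) and (iv) guarantee a handy tuple only when $|\W(L)|\in\{4,5\}$. For $|\W(L)|\le 3$ there need be no handy tuple at all: a $Z$-valid encoding can perfectly well have, say, two bad arcs both labelled $-1$ (or both labelled $2$), since the bad arcs only need to form a \emph{subdigraph} of one of the configurations in Figure~\ref{f:zoo}. In that situation your switch $[\alpha\beta w\gamma]$ cannot even be set up, and your reduction stalls. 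The paper deals with this by introducing two further switch types, a ``$2$-switch'' and a ``$(-1)$-switch'', each of which removes a single bad arc of the specified label without requiring any handy tuple; the existence of a suitable fourth vertex $\delta$ is argued separately in each case using the row/column sums together with Lemma~\ref{structure}(i),(v),(vi).

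Second, your claim that ``no allowed configuration has a single bad arc'' is false, and so is the argument you build on it. Lemma~\ref{structure}(i) says the bad arcs form a subdigraph of one of the Figure~\ref{f:zoo} digraphs; a single arc is certainly such a subdigraph. Hence $|\W(L')|=1$ is entirely compatible with $Z$-validity, and you cannot force the third bad arc to be ``absorbed'' into the switch at $w$. The paper's count therefore runs differently: with five bad arcs it uses two $(-1,2)$-switches (via the two handy tuples of Lemma~\ref{structure}(iii)) to reach at most one bad arc, then one more switch; with four bad arcs it uses one $(-1,2)$-switch to reach at most two, then at most two more one-at-a-time switches; with at most three bad arcs, at most three one-at-a-time switches suffice.

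Your column-sum heuristic for finding $w$ is in the right spirit for the $(-1,2)$-switch, and the paper does something similar there; but the analysis for the $2$-switch and $(-1)$-switch is where the delicate case-work (including the special treatment for $d=1,2$ via Lemma~\ref{structure}(v),(vi)) actually lives.
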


\begin{proof}
Let $L$ be a $Z$-valid encoding and let $H$ be the digraph given by
the entries of $L+Z$ which equal 1. 

First suppose that $L$ contains a handy tuple.  Then $L$ contains
a very handy tuple, by Lemma~\ref{structure} (ii). 
If $L$ contains a very handy tuple $(i_1,\alpha_1,\beta_1,\gamma_1)$
and a handy tuple $(i_2,\alpha_2,\beta_2,\gamma_2)$ such that 
$\alpha_1\neq \alpha_2$
and (\ref{independent}) holds, then we choose
$(i,\alpha,\beta,\gamma)$ to be the very handy tuple 
$(i_1,\alpha_1,\beta_1,\gamma_1)$.
Otherwise, let $(i,\alpha,\beta,\gamma)$ be any
very handy tuple in $L$.

If $i=0$ (respectively, $i=1$) then the sum of the labels
on the bad arcs with $\beta$ as head (respectively, tail) is strictly greater
than the sum of the labels on the bad arcs with $\gamma$ as head (respectively,
tail).
By construction, each row of $L$ adds up to $d$ and each column of $L$ adds up to $d$.
Hence $\gamma$ is the head (respectively, tail)
of strictly more good arcs (with label 1) than $\beta$.
It follows that there exists a vertex $\delta$ such that
$\zeta^i L(\delta,\gamma) = 1$ and $\zeta^i L(\delta, \beta)= 0$.
Now we can perform the switch $\zeta^i [\alpha\beta\delta\gamma]$ 
to give an encoding $L'$ with 
$\zeta^i L'(\alpha,\beta)=\zeta^i L'(\delta,\beta)=1$
and $\zeta^i L'(\alpha,\gamma)=\zeta^i L'(\delta,\gamma)=0$.  
Note that $L'$ is a $Z$-valid encoding by Lemma~\ref{Zvalid}, and that
$|\W(L')|=|\W(L)|-2$. 
We call this operation a
$(-1,2)$-\emph{switch}.

Next suppose that no vertex is the head (respectively, tail) of two bad
arcs with distinct labels, but that an arc exists in $L$ with label 2.
By Lemma~\ref{structure} (iii), (iv), there are at most three bad arcs in $L$.
Choose vertices $\alpha,\beta$ such that for some $i\in\{ 0,1\}$
we have $\zeta^i L(\alpha,\beta)=2$
and if $i=0$ (respectively, $i=1$) then $\alpha$ is the tail
(respectively, head) of exactly one bad arc.
(That such an $\alpha$ exists follows from Lemma~\ref{structure} (i).)
Let $U$ be the set of vertices $x\neq \alpha$ with $\zeta^i L(\alpha,x)=0$.
Since $\alpha$ is the tail (respectively, head) of exactly $d-2$ arcs 
labelled 1 and one arc labelled 2, it follows that $|U| = n-d\geq d$.

We claim that there exists a vertex $\gamma\in U$ which is not the
head (respectively, tail) of a bad arc with label 2.
If $d\geq 3$ then there are at most 
3 vertices which are at the head (respectively, tail)
of a bad arc labelled 2, and one of these is $\beta$.  
Since $\beta\not\in U$ and $|U|\geq 3$, we can choose a vertex 
$\gamma\in U$ which is not the head of a bad arc with label 2, as claimed.
If $d=2$  then by Lemma~\ref{structure} (vi), 
each vertex in $H$ is
head (respectively, tail) of at most one bad arc labelled 2.  Hence there are
at most 2 bad arcs labelled 2 in $L$, by Lemma~\ref{structure} (i).
Therefore at most one vertex other than $\beta$ is the
head (respectively, tail) of a bad arc labelled 2 in $L$.
The claim then follows since $\beta\not\in U$ and $|U|\geq 2$. 
If $d=1$ then by Lemma~\ref{structure} (v) there is
exactly one bad arc in $L$, namely $\zeta^i (\alpha,\beta)$.
Hence we can let $\gamma$ be any element of $U$ since $\beta\not\in U$,
and the claim follows as $|U|\geq 1$ in this case.

Now $\beta$ is the head (respectively, tail) of at most $d-2$ good arcs
and $\gamma$ is the head (respectively, tail) of at least $d$ good arcs.  
Hence we can choose a vertex $\delta$ such that $\zeta^i L(\delta,\beta)=0$
and $\zeta^i L(\delta,\gamma)=1$.  Perform the switch
$\zeta^i[\alpha \beta\delta\gamma]$ to produce an 
encoding $L'$  with $\zeta^i L'(\alpha,\beta)=\zeta^i L'(\alpha,\gamma)
=\zeta^i L'(\delta,\beta)=1$ and $\zeta^i L'(\delta,\gamma)=0$.  
Then $L'$ is $Z$-valid by Lemma~\ref{Zvalid}, and
$|\W(L')| = |\W(L)| - 1$. 
Call this operation a 2-\emph{switch}.

Finally, suppose that the only remaining bad arcs are labelled $-1$.
Let $\alpha$ and $\gamma$ be vertices such that $\zeta^i L(\alpha,\gamma)=-1$
for some $i\in \{0,1\}$,  
choosing $\alpha$ to be a vertex at the tail, when $i=0$ 
(respectively head, when $i=1$) of
two bad arcs with label $-1$, if such a vertex exists.
Note that when $L$ contains three bad arcs with label $-1$ then
such a choice of $\alpha$ exists, by Lemma~\ref{structure} (i).
We claim that there exists a vertex $\beta$ such that 
$\zeta^i L(\alpha,\beta)=1$ but $\beta$ is not the head, when $i=0$
(respectively tail, when $i=1$) of any bad arc.  
To see this, note that there are at least $d+1\geq 2$ choices for $\beta$,
and there is at most one vertex which is at the head (respectively, tail)
of a bad arc which is not incident with $\alpha$, by choice
of $\alpha$.  Hence we can avoid this vertex when choosing $\beta$,
proving the claim.
Then $\beta$ is the head (respectively, tail) of exactly $d$ good arcs, 
while $\gamma$ is the head (respectively, tail) of at least $d+1$ good arcs.  
Hence there
is at least one way to choose a vertex $\delta$ such that
$\zeta^i L(\delta,\beta)=0$ and $\zeta^i L(\delta,\gamma)=1$.  Perform the
switch  $\zeta^i [\alpha\beta\delta\gamma]$
to produce an encoding $L'$, with $\zeta^i L'(\alpha,\beta) = 
\zeta^i L'(\delta,\gamma) = \zeta^i L'(\alpha,\gamma)=0$,
$\zeta^i L'(\delta,\beta)=1$.
Again Lemma~\ref{Zvalid} shows that $L'$ is $Z$-valid, and
$|\W(L')| = |\W(L)| - 1$. 
Call this operation a $(-1)$-\emph{switch}.

If the original encoding $L$ has five bad arcs then by 
Lemma~\ref{structure} (iii),
we can find a very handy tuple $(i_1,\alpha_1,\beta_1,\gamma_1)$
in $L$ and perform the $(-1,2)$-switch $\zeta^{i_1} [\alpha_1\beta_1\gamma_1\delta_1]$,
where $\delta_1$ is a vertex found using the procedure above.
It follows from (\ref{independent}) and Figure~\ref{structure} (i)
that $(i_2,\alpha_2,\beta_2,\gamma_2)$
is a very handy tuple in the resulting $Z$-valid encoding $L'$.  Hence we may
perform the $(-1,2)$-switch $\zeta^{i_2} [\alpha_2\beta_2\gamma_2\delta_2]$
to transform $L'$ into the $Z$-valid encoding $L''$ with at most one bad arc.
At most one further switch is required to transform $L''$ into an encoding
with no bad arcs.  Thus at most 3 switches are needed to process $L$ 
when $L$ has five bad arcs.

Similarly, if $L$ has four bad arcs then by Lemma~\ref{structure} (iv), 
we can transform $L$ into a $Z$-valid encoding $L'$ with at
most two bad arcs, using a $(-1,2)$-switch.  At most two further switches
are needed to produce an encoding with no bad arcs.  Thus at most 3 switches
are needed to process $L$ when $L$ has four bad arcs.  Clearly, 
if $L$ has at most
3 bad arcs then at most 3 switches are required.  This completes the proof.
\end{proof}

For $Z\in\Omega_{n,d}$ let $\mathcal{L}(Z)$ be the set of all 
$Z$-valid encodings. 
We obtain the following upper bound on $|\mathcal{L}(Z)|$ using
a relatively simple proof.   It is possible that an improved bound
can be found using a more careful analysis, probably saving a
factor of $n$.

\begin{lemma}
For any $Z\in\Omega_{n,d}$ we have
\[ |\mathcal{L}(Z)| \leq  25\, d^6 n^6\, |\Omega_{n,d}|.\]
\label{poly}
\end{lemma}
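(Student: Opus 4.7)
The plan is to use Lemma~\ref{fix} to define an injection $\Phi\colon\mathcal{L}(Z)\to\Omega_{n,d}\times\mathcal{T}$ and then bound $|\mathcal{T}|$. For each $L\in\mathcal{L}(Z)$, Lemma~\ref{fix} supplies a canonical sequence of at most three switches $L=L_0\to L_1\to\cdots\to L_k=Y$ with $Y\in\Omega_{n,d}$ and $k\le 3$; I record the triple (types and vertex data, with a ``null'' marker if $k<3$) as an element $\vec\sigma\in\mathcal{T}$ and set $\Phi(L)=(Y,\vec\sigma)$. Since switches on encodings are reversible, applying $\vec\sigma$ to $Y$ in reverse recovers $L$, so $\Phi$ is injective and $|\mathcal{L}(Z)|\le|\Omega_{n,d}|\cdot|\mathcal{T}|$. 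It therefore suffices to prove $|\mathcal{T}|\le 25\,d^6 n^6$.

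Each switch $\zeta^i[\alpha\beta\delta\gamma]$ belongs to one of the three flavours constructed in the proof of Lemma~\ref{fix} (the $(-1,2)$-, $2$- and $(-1)$-switches). Each flavour prescribes the values at the four positions $(\alpha,\beta),(\alpha,\gamma),(\delta,\beta),(\delta,\gamma)$ of the intermediate encoding $L_j$; in every flavour the prescribed pattern contains two ``$1$'' entries. I would therefore parameterize the vertex data of a single switch by picking two of the four vertices freely ($n^2$ choices) and the other two as ``neighbours'' of value~$1$ in $L_j$ along an appropriate row or column. Since $L_j$ has row and column sums equal to $d$, entries in $\{-1,0,1,2\}$, and at most $|\W(L_j)|\le|\W(L)|\le 5$ bad entries, every row and every column of $L_j$ contains at most $d+5$ entries of value $1$. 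This gives at most $2(d+5)^2 n^2$ quadruples per switch (the $2$ accounting for $i\in\{0,1\}$). Combining across three switches with a bounded factor for type- and length-choices yields $|\mathcal{T}|=O((d+5)^6 n^6)=O(d^6 n^6)$.

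The main obstacle is obtaining the explicit constant $25$. Two subtleties arise. First, the ``two free, two constrained'' parameterization must be chosen uniformly across the three switch flavours so that the constrained pair always lies in a set of size at most $d+5$; this needs a short case check of the $\{0,1\}$-patterns at the four positions for each flavour. Second, the intermediate encodings $L_j$ are not elements of $\Omega_{n,d}$ and may themselves carry up to five bad arcs, inflating the natural ``$d$ neighbours'' count to $d+O(1)$; although $|\W(L_j)|\le 5$ throughout the sequence, this inflation must be tracked carefully so that the resulting multiplicative constant remains bounded by $25$ in the final estimate.
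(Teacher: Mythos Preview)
Your overall plan---map $L$ canonically to a digraph via Lemma~\ref{fix}, then bound the preimage by counting reverse-switch data---is exactly the paper's strategy. But your parameterisation of the reverse switches has a real gap that prevents you from reaching the constant $25$, and in fact prevents you from reaching \emph{any} fixed constant times $d^6 n^6$.

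The problem is your use of $d+5$ as the bound on the number of $1$-entries in a row or column of $L_j$. This gives roughly $8(d+5)^6 n^6$ (times a type-factor), and $(d+5)^6/d^6$ is unbounded as $d\to 1$: for $d=1$ you are off by a factor of $6^6$. The ``track the inflation carefully'' programme cannot succeed, because the inflation is genuinely present in your parameterisation.

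The key device you are missing is to exploit $Z$-validity to constrain some vertices by $Z$ rather than by $L_j$. Every intermediate $L_j$ is $Z$-valid (Lemma~\ref{Zvalid}), so whenever a reverse switch produces a $-1$ entry at position $(\alpha,\gamma)$ you know $Z(\alpha,\gamma)=1$; hence $\gamma$ is an out-neighbour of $\alpha$ in the fixed $d$-regular digraph $Z$, giving exactly $d$ choices with no ``$+5$''. The paper uses this for the reverse $(-1)$-switch and the reverse $(-1,2)$-switch. A second refinement is that the three switch flavours admit different ``free/constrained'' splits: the reverse $2$-switch and reverse $(-1,2)$-switch are parameterised with only \emph{one} free vertex and three neighbour-constraints (giving $O(d^3 n)$ rather than $O(d^2 n^2)$). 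The resulting flavour-specific bounds
\[
N_{-1}\le 2d^2 n(n-2),\qquad N_2\le 2d(d-1)^2 n,\qquad N_{(-1,2)}\le 2d^2(d+1)n
\]
are then multiplied along each of the finitely many admissible type sequences (there are $19$), and the standing assumption $d\le n/2$ is used to trade surplus $d$-factors for $n$-factors so that every product is at most $25\,d^6 n^6$. Your uniform ``two free, two constrained by $L_j$'' scheme recovers the correct polynomial order but cannot give the stated constant.
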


\begin{proof}
Fix $Z\in\Omega_{n,d}$ and let $L\in\mathcal{L}(Z)$ be a $Z$-valid encoding.
By Lemma~\ref{fix} there exists a sequence
\[ L = L_0, L_1, \ldots, L_r = A\]
where $A\in \Omega_{n,d}$ is a digraph with no bad arcs, $r\leq 3$
and each of
$L_1,\ldots, L_r$ is $Z$-valid. 
We can turn this into a function 
$\varphi:\mathcal{L}(Z)\to\Omega_{n,d}$ by performing these switches
in a canonical way:  as in Lemma~\ref{fix} perform all $(-1,2)$-switches first,
then all 2-switches, then all $(-1)$-switches, following the extra conditions
described in Lemma~\ref{fix} and breaking ties using lexicographic ordering
on the 5-tuple $(i,\alpha,\beta,\gamma,\delta)$.
It suffices to prove that $|\varphi^{-1}(A)|\leq 25\, d^6\, n^6$
for all $A\in\Omega_{n,d}$.

Now fix $A\in\Omega_{n,d}$.  Define a 
\emph{reverse $X$-switch}
to be the reverse of a $X$-switch, for $X \in \{(-1,2),\, -1,\, 2\}$.
For an upper bound we count all encodings 
which can be obtained from $A$ using at most three reverse switches,
regardless of whether $A$ is the canonical image of that encoding under
$\varphi$.  We will perform the reverse switchings in order:  first 
the reverse $(-1)$-switches, if any, then any reverse 2-switches and 
finally any reverse $(-1,2)$-switches.  
Note that a reverse switching alters four entries of the current encoding, 
none of which are bad entries.  So a bad arc created by
a reverse switch will never be changed by a later reverse switch.

Fix an encoding $B\in\mathcal{L}(Z)$ (which may not have any bad arcs).
Let $N_X(B)$ be the number of distinct 5-tuples 
$(i,\alpha,\beta,\gamma,\delta)$ which define a 
reverse $X$-switch that may be performed in $B$,  
for $X\in\{ (-1,2),\, -1,\, 2\}$.
The result of each of the reverse switches counted by $N_X(B)$ is  
a $Z$-valid encoding.
Our next task is to calculate upper bounds on $N_X(B)$ which hold for
all encodings $B\in\mathcal{L}(Z)$.  

We only perform $(-1)$-switches on encodings $B\in\mathcal{L}(Z)$ which 
have no bad arc with label 2.  For such encodings we claim that
\begin{equation}
\label{N1}
N_{-1}(B)  \leq 2 d^2 n(n-2).
\end{equation}
With notation as in Lemma~\ref{fix}, the factor of 2 counts the two
choices of orientation $i\in \{ 0,1\}$.  
We prove the bound assuming that $i=0$, and the proof for $i=1$
follows by symmetry.  There are $n$ choices for vertex $\alpha$, and 
$d$ choices for
$\gamma$ since $\zeta^i (\alpha,\gamma) \in A(Z)$ as $B$ is $Z$-valid. 
Then choose $\beta\neq \alpha$
so that $B(\alpha,\beta)=0$ and $\beta$ is not the head of any bad arc.
There are at most $n-2$ choices for $\beta$ since 
$\beta\not\in\{ \alpha,\gamma\}$.   
Then there are $d$ choices for $\delta$
such that $B(\delta,\beta)=1$,
since $\beta$ is the head of exactly $d$ good arcs.
This gives the claimed bound on $N_{-1}(B)$ when $B$ has no bad
arcs labelled 2.  

Now suppose that $B\in\mathcal{L}(Z)$ 
may contain bad arcs with distinct labels,  but no vertex is the
head (respectively, tail) of two bad arcs with distinct labels in $B$.
We also ensure that the reverse 2-switchings that we perform 
never create any such pair of bad arcs, in order to maintain the
canonical order in which forward switches are performed.
We claim that
\begin{equation}
\label{N2}
N_2(B)  \leq 2d (d-1)^2 n.  
\end{equation}
The factor of 2 counts the two choices of orientation $i\in \{0,1\}$.
We prove the bound assuming that $i=0$, and the proof for $i=1$
follows by symmetry.
There are at most $n$ choices for $\alpha$ which is not the tail of a 
bad arc labelled $-1$.
Then distinct out-neighbours
$\beta$, $\gamma$ of $\alpha$ in $B$ can be chosen in at most $d(d-1)$ ways
such that $\beta$ is not the head of a bad arc labelled $-1$
and $\gamma$ is not the head of a bad arc labelled 2.
(Note, $\alpha$ is the tail of at most $d$ good arcs, since $\alpha$
is not the tail of any bad arc labelled $-1$.)
Then there are at most $d-1$ choices for
a neighbour $\delta$ of $\beta$ in $B$,
since $\beta$ is the head of at most $d$
good arcs.  This gives the claimed bound on $N_2(B)$. 

Finally, we claim that for all $B\in\mathcal{L}(Z)$ we have
\begin{equation}
\label{N12}
N_{(-1,2)}(B) \leq 2d^2(d+1) n. 
\end{equation}
Again, the factor of 2 counts the two choices of orientation $i\in\{0,1\}$
and we assume $i=0$ below, without loss of generality.
There are $n$ ways to choose a vertex $\alpha$ which may be the
tail of at most one bad arc in $B$.
There are $d$ choices for $\gamma$, as $B$ is $Z$-valid so 
$\zeta^i(\alpha,\gamma)\in A(Z)$.
Then there are at most $d+1$ choices for $\beta$ such that
$\beta$ is an out-neighbour of $\alpha$ and is not the head of any arc
labelled $-1$.  (There are at most $d$
choices for $\beta$ if there is no bad arc incident with $\alpha$ in $B$.)
Finally, there are at most $d$ choices for $\delta\neq\alpha$ 
such that $B(\delta,\beta)=1$, 
since $\beta$ is the head of at most $d$ good arcs.  
(The $d$ here arises since $\beta$ may itself be the head of at most one
bad arc in $B$,
and the bad arc may be labelled $-1$.)  
This gives the claimed bound on $N_{(-1,2)}(B)$.

Each sequence of reverse switches which may arise is given a type,
defined by the corresponding sequence of labels in $\{ -1, 2, (-1,2)\}$.
It follows from the proof of Lemma~\ref{fix} 
that the only types of reverse switchings which occur are given by
the following 9 sequences and all distinct subsequences of these 
(including the empty sequence):
\[
\begin{array}{lll}
 [\ -1,\ (-1,2),\ (-1,2)\ ], & [\ 2,\ (-1,2),\ (-1,2)\ ],  & [\  -1,\ -1,\ (-1,2)\ ], \\{}
 [\ -1,\ 2,\ (-1,2)\ ],  & [ \ 2,\ 2,\ (-1,2)\ ], & [\ -1,\ -1,\ -1\ ],\\{}
 [\ -1,\ -1,\ 2\ ], & [\ -1,\ 2,\ 2\ ], & [\ 2,\ 2,\ 2\ ]. 
\end{array}
\]
This gives 19 possible types  in all.
We calculate the contribution of a type by simply multiplying the upper
bounds obtained in (\ref{N1})--(\ref{N12}) corresponding to each reverse
switch in the sequence.  
(It is at this step that a more careful analysis may lead to an
improved bound, but we are satisfied by the bound given by this simple
calculation.)
For example, the contribution from the type $[\ -1,\ (-1,2),\ (-1,2)]$ is
\[  2d^2 n(n-2) (2d^2(d+1)n)^2  = 8 d^6 (d+1)^2 n^3(n-2).\]

Finally we simply sum the contribution from each of the 19 types
and find that the resulting expression is bounded above by  
$25 d^6 n^6$, using the inequalities $1\leq d\leq n/2$.
This shows that
\[ |\varphi^{-1}(A)| \leq  25\, d^6 n^6, \]
completing the proof.   
\end{proof}

For each pair $(G,G')$ of distinct digraphs in $\Omega_{n,d}$, let
$\mathcal{P}_{G,G'}$ be the set of $|\Psi(G,G')|$ canonical paths
which we have defined from $G$ to $G'$, one for each pairing
$\psi\in\Psi(G,G')$.  Let $\mathcal{P} = \cup_{G\neq G'} \mathcal{P}_{G,G'}$.
Define
\[ f(\gamma) = |\Omega_{n,d}|^{-2}\, |\Psi(G,G')|^{-1}\]
for each path $\gamma\in\mathcal{P}_{G,G'}$.  Then
\[ \sum_{\gamma\in\mathcal{P}_{G,G'}} f(\gamma) = |\Omega_{n,d}|^{-2}
          = \pi(G)\,\pi(G')\]
where $\pi$ is the stationary distribution of the Markov chain,
which is uniform on $\Omega_{n,d}$.  Thus $f:\mathcal{P}\to [0,\infty)$
is a flow.  We want to apply Lemma~\ref{second-eigval}.  First we bound
$f(e)$ for all transitions $e$ of the Markov chain.

\begin{lemma}
For any transition $e=(Z,Z')$ of the Markov chain,
\[ f(e) \leq 100\, d^{22}\, n^6\, |\Omega_{n,d}|^{-1}.\]
\label{load}
\end{lemma}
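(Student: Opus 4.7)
The plan is to estimate $f(e)$ for $e=(Z,Z')$ via a standard encoding change-of-variables. By definition,
\[
f(e) \;=\; \sum_{(G,G',\psi)\,:\, e \in \gamma_\psi(G,G')} \frac{1}{|\Omega_{n,d}|^2\,|\Psi(G,G')|},
\]
with the convention that a transition appearing multiple times on a canonical path contributes to $f(e)$ with the corresponding multiplicity. To each contributing triple $(G,G',\psi)$ I would associate the encoding $L := G + G' - Z$. By Lemma~\ref{zoo} the bad arcs of $L$ form a subdigraph of one of the configurations in Figure~\ref{f:zoo}, and the remaining structural conditions of Lemma~\ref{structure} hold by construction, so $L \in \mathcal{L}(Z)$.

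The central observation is that $|\Psi(G,G')|$, as given by~(\ref{number-pairings}), depends only on the in- and out-colour-degrees at each vertex of $H := G \triangle G'$, and that $H$ viewed as an uncoloured digraph is determined by $(Z,L)$; write $|\Psi(H_L)|$ for this common value. Reindexing the sum by $L$, invoking Lemma~\ref{notquiteunique} to bound by $4$ the number of $(G,G')$ compatible with each quadruple $(Z,Z',L,\psi)$, and bounding the multiplicity of $e$ on a canonical path by a polynomial factor $D$ via a case analysis against Sections~\ref{s:1circuit}--\ref{s:triangle} and Figure~\ref{f:zoo}, I obtain
\[
f(e) \;\le\; \frac{4D}{|\Omega_{n,d}|^2}\, \sum_{L \in \mathcal{L}(Z)} \sum_{\psi \in \Psi(H_L)} \frac{1}{|\Psi(H_L)|} \;=\; \frac{4D\,|\mathcal{L}(Z)|}{|\Omega_{n,d}|^2},
\]
since for each $L$ the inner sum has $|\Psi(H_L)|$ terms each of size $1/|\Psi(H_L)|$ and collapses to $1$. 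Lemma~\ref{poly} then gives $|\mathcal{L}(Z)| \le 25\,d^6 n^6\,|\Omega_{n,d}|$, and taking $D$ to be a suitable polynomial in $d$ (which the configurations of Figure~\ref{f:zoo} together with the explicit switch descriptions of Sections~\ref{s:1circuit}--\ref{s:triangle} readily support) yields the stated bound $f(e) \le 100\,d^{22}\, n^6\,|\Omega_{n,d}|^{-1}$.

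The hard part is the joint bookkeeping: formally $\psi$ is a coloured pairing in a set $\Psi(G,G')$ that depends on the blue/red labelling of $H$, and the clean cancellation of $|\Psi(G,G')|^{-1}$ against the cardinality of the pairing set requires identifying $\Psi(G,G')$ with the set of pairings of the uncoloured $H_L$ (via the bijection that records, within each paired pair of arcs at a vertex, which arc is blue). Coupled with the need to verify that no transition $e$ is visited on a canonical path with multiplicity larger than the polynomial $D$ allows, this is the main technical obstacle; once it is overcome and Lemma~\ref{notquiteunique} is applied to fix the colouring up to a factor of $4$, the argument reduces mechanically to the estimate $|\mathcal{L}(Z)| \le 25\,d^6 n^6\,|\Omega_{n,d}|$ from Lemma~\ref{poly}.
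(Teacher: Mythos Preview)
Your overall encoding strategy is right, and the appeals to Lemma~\ref{notquiteunique} and Lemma~\ref{poly} are exactly what is needed. But the step where you ``collapse the inner sum to $1$'' does not go through, and the factor $D$ you introduce is not the right object.

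The problem is with the set you call $\Psi(H_L)$. You define its \emph{cardinality} to be $|\Psi(G,G')|=\prod_v\theta_v!\,\phi_v!$, which is indeed determined by $(Z,L)$, but you then sum over $\psi\in\Psi(H_L)$ as if this were a set you could range over independently of the colouring. Recall that an element of $\Psi(G,G')$ is a blue--red pairing: at each vertex it matches each blue in-arc to a red in-arc. The object that Lemma~\ref{notquiteunique} actually takes as input is the \emph{uncoloured} pairing induced by $\psi$ (a perfect matching on the $2\theta_v$ in-arcs and on the $2\phi_v$ out-arcs at each vertex), since the circuit decomposition is constructed from $H$ and $\psi$ without reference to colours. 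The set $\Psi(H)$ of uncoloured pairings has size $\prod_v(2\theta_v-1)!!\,(2\phi_v-1)!!$, which in general is strictly larger than $|\Psi(G,G')|$, so your proposed bijection does not exist. The correct reindexed bound, after applying Lemma~\ref{notquiteunique}, is
\[
|\Omega_{n,d}|^2\,f(e)\;\le\;4\sum_{L\in\mathcal{L}(Z)}\;\sum_{\psi\in\Psi^{\ast}(H,L)}\frac{1}{|\Psi(G,G')|},
\]
where $\Psi^{\ast}(H,L)\subseteq\Psi(H)$ is the set of uncoloured pairings that actually arise from some contributing triple. Bounding $|\Psi^{\ast}(H,L)|/|\Psi(G,G')|$ is the whole content of the lemma, and it is \emph{not} automatic.

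The paper handles this by giving $H$ a green/yellow colouring determined by $Z$ (green${}={}$in $Z$, yellow${}={}$in $L$), observing that any contributing $\psi$ can have at most $16$ monochromatic (``bad'') pairs in this colouring, with at most two bad pairs of each colour and orientation at any vertex, and then counting uncoloured pairings subject to these constraints. This yields $|\Psi^{\ast}(H,L)|\le d^{16}\,|\Psi(G,G')|$, which is where the factor $d^{16}$ in the final bound comes from. Your factor $D$, described as ``the multiplicity of $e$ on a canonical path'', is not this quantity: the canonical paths are simple in $\mathcal{G}$, so that multiplicity is $1$. The $d^{16}$ has nothing to do with how often a transition is revisited; it measures the blow-up from blue--red pairings to uncoloured pairings caused by the at most five interesting arcs of Lemma~\ref{zoo}. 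You have correctly flagged this as ``the main technical obstacle'', but you have not supplied an argument for it.
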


\begin{proof}
Fix a transition $e=(Z,Z')$ of the Markov chain.
Let $(G,G')$ be a pair of distinct digraphs in $\Omega_{n,d}$
and suppose that $e$ lies on $\gamma_\psi(G,G')$, the canonical
path from $G$ to $G'$ corresponding to the pairing $\psi\in\Psi(G,G')$.
From $Z$ and $(G,G')$ we can construct $L$ and the digraph
$H=Z\triangle L = G\triangle G'$.  We colour arcs of $H$ green
if they belong to $Z$ and yellow if the corresponding entry in
$L$ is 1. 
(Recall that the symmetric
difference $H$ consists of those arcs with entry 1 in $L+Z = G+G'$.)

From the pairing $\psi$ we obtain the circuit decomposition
$\mathcal{C}$ of $H$, with colours alternating green, yellow
almost everywhere.
A vertex $x$ is \emph{bad} with respect
to $\psi$ if two arcs of the same colour are paired at $x$ under $\psi$.
If a vertex is not bad it is called \emph{good}.
Every bad vertex
lies on the circuit currently being processed. Specifically, bad 
vertices may only be found incident to interesting arcs. 
Lemma~\ref{zoo} shows that there are at most
five interesting arcs and at most six potentially bad vertices.

A yellow-yellow
or green-green pair at a bad vertex $x$ is called a \emph{bad pair}
with respect to $\psi$.
Careful consideration of the possibilities reveals that
there can be at most 16 bad pairs with respect to $\psi$.  
In the worst case, there are five interesting arcs which all belong 
to $H$.   An interesting arc $e$ which belongs to $H$ 
creates two bad pairs in the circuit containing $e$, one at each
endvertex of $e$ (both of the same colour).
A bad pair is also created in the current circuit $C$ 
incident with each endvertex of each interesting arc, 
giving at most six further bad pairs.
(The worked example in Section~\ref{a:example} gives an example of
a digraph, $Z_4$, containing the maximum number of bad pairs:
see Figure~\ref{example5}.)

Note also that a bad vertex may be the head (respectively, tail)
of at most two bad pairs of each colour.  This follows from 
Lemma~\ref{zoo} since no vertex is head (respectively, tail) 
of more than two interesting arcs with the same label.  Hence a bad vertex may 
be the head (respectively, tail) of at most four bad pairs in total.
This is true even if there are some coincidences between the bad 
vertices, which may occur when the interesting arcs have one of the
configurations other than the first one in Figure~\ref{f:zoo}.
To see this, note that for all the configurations in Figure~\ref{f:zoo},
the only vertex which is the head (or tail) of more than two interesting
arcs 
is $v$, the start-vertex of the current circuit, and $v$ is always
distinct from all other bad vertices.

Given the uncoloured digraph $H$, we can form a pairing $\psi$ by pairing
up all in-arcs around $v$ and pairing up all out-arcs around $v$,
for each vertex $v$.  Let the set of all these pairings be $\Psi(H)$.
Say that a pairing $\psi\in\Psi(H)$ is \emph{consistent with} $L$
if there are at most 16 bad pairs in the yellow-green colouring of
$H$ with respect to $L$, and at each vertex $u$ 
and for each choice of orientation there are
at most two bad pairs of each colour with that orientation at $u$.   
Let $\Psi'(H,L)$ be the set of all pairings
$\psi$ of $H$ which are consistent with $L$.  Given any $(G,G')$
with $G\triangle G'=H$, any pairing $\psi\in\Psi(G,G')$ is consistent
with the yellow-green colouring of $H$, as proved above.  Therefore each triple
$(G,G',\psi)$ with $\psi\in\Psi(G,G')$ and $e\in\gamma_\psi(G,G')$
gives rise to at least one pair $(L,\psi)$ with $L\in\mathcal{L}(Z)$
and $\psi\in\Psi'(H,L)$.

Conversely, we can start with $L\in\mathcal{L}(Z)$ and find an
upper bound for $|\Psi'(H,L)|$.  Once $\psi$ and $(Z,Z')$ are given,
there are at most four possibilities for $(G,G')$ with
$e\in\gamma_\psi(G,G')$, by Lemma~\ref{notquiteunique}.
Recall from (\ref{number-pairings}) that
\[ |\Psi(G,G')| = \prod_{v\in V} \theta_v!\, \phi_v!\]
where $2\theta_v$ is the in-degree of $v$ in $H$ and $2\phi_v$ is the
out-degree of $v$ in $H$.  
Similarly, each good vertex $v$ contributes a
factor $\theta_v!\, \phi_v!$ to $|\Psi'(H,L)|$, but a bad vertex may
contribute more.   The contributions from in-arcs and out-arcs are
independent, so we consider only in-arcs below.  

Recall that no vertex can be the head of more than two bad pairs of
a given colour.  
First suppose that a vertex $v$ is the head of
$\theta_v+2$ green arcs and $\theta_v-2$ yellow arcs.
Then $v$ must be bad, with two bad green pairs and no bad yellow pairs.
The number of ways to pair up the in-arcs around $v$
is
\[ 3 \, \binom{\theta_v+2}{4}\, (\theta_v-2)! 
       = \frac{(\theta_v + 2)(\theta_v+1)}{8}\, \theta_v!
       \leq \theta_v^2\cdot \theta_v! \leq d^2 \theta_v!.
\]
Next suppose that $v$ is the head of $\theta_v+1$ green arcs 
and $\theta_v-1$ yellow arcs.  Then $v$ must be a bad vertex.
Now $v$ may be the head of two bad green pairs and one bad yellow pair,
or $v$ may be the head of one bad green pair and no bad yellow pairs.
The number of ways to pair up the in-arcs around $v$ with two bad
green pairs and one bad yellow pair is
\[  3\, \binom{\theta_v+1}{4}\,\binom{\theta_v-1}{2}\, (\theta_v-3)! 
 = \frac{(\theta_v+1)(\theta_v-1)(\theta_v-2)}{16}\, \theta_v!
   \leq \theta_v^3\, \theta_v!
  \leq d^3\, \theta_v!, 
\]
while the number of pairings of in-arcs around $v$ with one bad green pair
and no bad yellow pairs is
\[  \binom{\theta_v+1}{2}\, (\theta_v-1)!  =
  \frac{\theta_v + 1}{2}\, \theta_v!   \leq \theta_v\, \theta_v! 
  \leq d\, \theta_v!.
\]
Finally, suppose that $v$ is the head of $\theta_v$ arcs of each colour.
Then $v$ may be good, or it may be the head of one bad pair of each colour,
or the head of two bad pairs of each colour.
The number of pairings of in-arcs around $v$ with two bad pairs of in-arcs
of each colour is
\[ 9\, \binom{\theta_v}{4}^2 \, (\theta_v - 4)!
 = 
    \frac{\theta_v(\theta_v-1)(\theta_v-2)(\theta_v-3)}{64}\, \theta_v!
 \leq \theta_v^4\cdot \theta_v!\\
  \leq d^4\, \theta_v!,
\]
while the number of pairings of in-arcs around $v$ with one bad pair of
each colour is
\[ \binom{\theta_v}{2}^2\, (\theta_v-2)! 
 = \frac{\theta(\theta_v-1)}{4}\, \theta_v! \leq \theta_v^2\,\theta_v!
     \leq d^2\, \theta_v!.
\]
By symmetry, the same bounds hold for out-arcs and also hold after
exchanging green and yellow.
Since there are at most 16 bad pairs, it follows that
\begin{equation}
 |\Psi'(H,L)|\leq d^{16}\, |\Psi(G,G')|.
\label{psi-prime}
\end{equation}
Now write $\mathbf{1}(e\in\gamma_\psi(G,G'))$ to denote the indicator
variable which is 1 if $e\in\gamma_\psi(G,G')$ and is 0 otherwise,
for  $(G,G')\in\Omega_{n,d}$ and $\psi\in\Psi(G,G')$.
Then
\begin{align*}
|\Omega_{n,d}|^2 f(e) 
   &=  \sum_{(G,G')}\,\, \sum_{\psi\in\Psi(G,G')}\,
       \mathbf{1}(e\in\gamma_\psi(G,G'))\, |\Psi(G,G')|^{-1}\\
   &\leq 4\, \sum_{L\in\mathcal{L}(Z)}\,\, \sum_{\psi\in\Psi'(H,L)}\,
          \mathbf{1}(e\in\gamma_\psi(G,G'))\, |\Psi(G,G')|^{-1}\\
   &\leq 4\, \sum_{L\in\mathcal{L}(Z)}\,\, \sum_{\psi\in\Psi'(H,L)}\,
	                 |\Psi(G,G')|^{-1}\\
               &\leq 4\, \sum_{L\in\mathcal{L}(Z)} \, d^{16}\\
	       &\leq 100\, d^{22}\, n^6\, |\Omega_{n,d}|.
\end{align*}
The first inequality follows by Lemma~\ref{notquiteunique},
the third inequality follows from (\ref{psi-prime}),
and applying Lemma~\ref{poly} gives the last inequality. 
This completes the proof.
\end{proof}

We can now complete our argument by proving 
Proposition~\ref{our-second-largest-eigval}.

\begin{proof}[Proof of Proposition~\ref{our-second-largest-eigval}]\
For any transition $e=(Z,Z')$ of the switch chain
we have
\[ 1/Q(e) =  |\Omega_{n,d}|/P(Z,Z') =  \binom{dn}{2}\, |\Omega_{n,d}|.\]
Therefore, by Lemma~\ref{load}, 
\begin{equation} 
\label{rho-bound}
\rho(f) \leq 50 d^{24}\, n^8.
\end{equation}
Next, observe that $\ell(f) \leq dn$, since each step along a canonical path
replaces at least one arc of $G$ by an arc of $G'$.
The result follows from Lemma~\ref{second-eigval}. 
\end{proof}

\section{An illustrative example}\label{a:example}

Let $(G,G')\in\Omega_{n,d}$ be any pair of digraphs 
with the symmetric difference $H$ given in
Figure~\ref{example1}, where  vertices of degree 0 in $H$ are not shown.
To avoid congestion in the figure, some vertices are depicted as black
rectangles.
Solid arcs belong to $G$ and dashed arcs belong to $G'$, so they play
the role of blue and red arcs.
\begin{center}
\begin{figure}[ht]
 \psfrag{v}{$v$}\psfrag{x00}{$x_{0,0}$} \psfrag{x01}{$x_{0,1}$}\psfrag{x11}{$x_{1,1}$}
 \psfrag{x10}{$x_{1,0}$} \psfrag{z00}{$z_{0,0}$} \psfrag{z01}{$z_{0,1}$}
 \psfrag{z11}{$z_{1,1}$} \psfrag{z10}{$z_{1,0}$}
 \psfrag{u1}{$u_1$} \psfrag{u2}{$u_2$} \psfrag{w1}{$w_1$} \psfrag{w2}{$w_2$}
 \psfrag{r1}{$r_1$} \psfrag{r2}{$r_2$} \psfrag{t1}{$t_1$} \psfrag{t2}{$t_2$}
 \psfrag{p1}{$p_1$} \psfrag{p2}{$p_2$} \psfrag{q1}{$q_1$} \psfrag{q2}{$q_2$}
 \psfrag{s1}{$s_1$} \psfrag{s2}{$s_2$} 
\centerline{\includegraphics[scale=0.48]{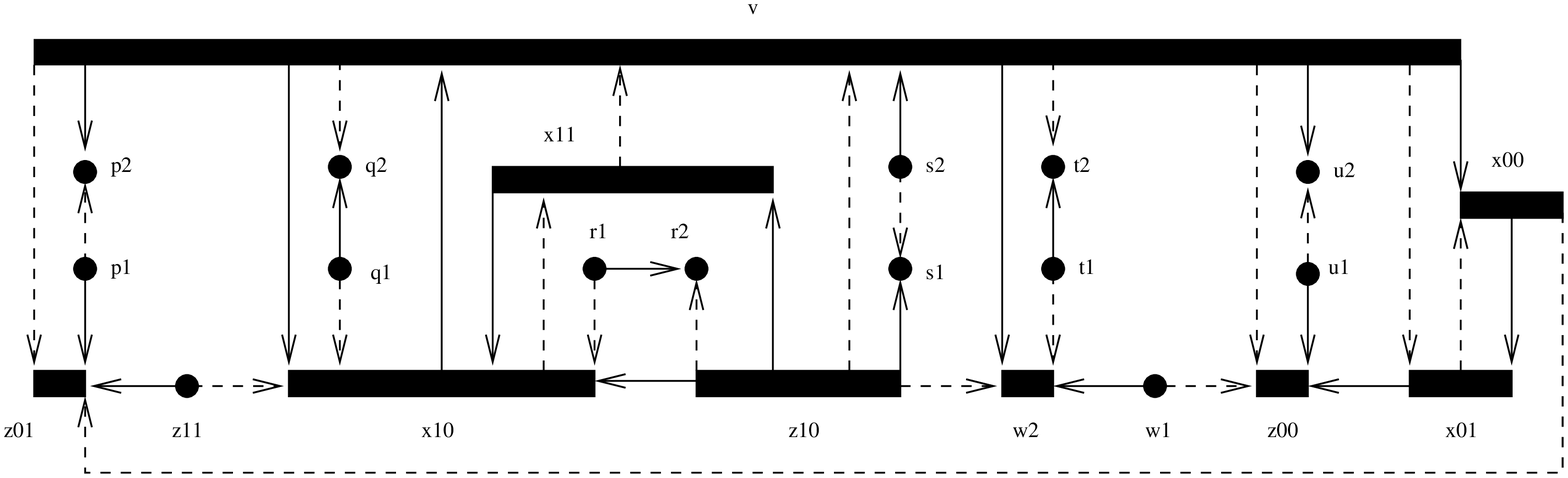}}
\caption{The symmetric difference $H$ of $G$ and $G'$.}
\label{example1}
\end{figure}
\end{center}
\vspace*{-\baselineskip}
Let $\psi$ which be the pairing which produces the forward circuits
\begin{equation} 
\label{circuits}
\begin{split}
& v x_{0,0} x_{0,1} z_{0,0} w_1 w_2 z_{1,0} x_{1,1} x_{1,0} v x_{1,1} x_{1,0} z_{1,1} z_{0,1} x_{0,0} x_{0,1},
\quad v p_2 p_1 z_{0,1},\quad 
v x_{1,0} q_1 q_2, \\
& \hspace*{2cm} z_{1,0} x_{1,0} r_1 r_2,\quad 
z_{1,0}  v s_2 s_1,\quad
v w_2 t_1 t_2,\quad 
v u_2 u_1 z_{0,0}, 
\end{split}
\end{equation}
in the given order.
Set $Z_0=G$ and start processing $H$.  The first circuit to process is
the eccentric 2-circuit
\[ S = 
 v x_{0,0} x_{0,1} z_{0,0} w_1 w_2 z_{1,0} x_{1,1} x_{1,0} v x_{1,1} x_{1,0} z_{1,1} z_{0,1} x_{0,0} x_{0,1}.
\]
We have $(i,h)=(0,0)$,  and the eccentric arc $(z_{1,0},v)$
does not belong to $A(S)$.  Hence $S$ falls into case (Ea) and 
we must first perform the eccentric switch $[z_{1,0} x_{1,1} x_{1,0} v]$.
This produces the next digraph $Z_1$ in the canonical path $\gamma_\psi(G,G')$.
The eccentric arc $(z_{1,0},v)$ has been used in the eccentric switch,
so it is now an interesting arc.  
Initially it belonged to $G'-G$,
and now it belongs to $Z_1\cap G'$, so it does not belong to the
current symmetric difference $Z_1\triangle G'$.  However, we include
all interesting arcs in our figures, denoted by thicker arcs
(either solid or broken, as appropriate).
Hence Figure~\ref{example2} shows the symmetric difference of $Z_1$ and $G'$,
together with the eccentric arc. 
\begin{center}
\begin{figure}[ht]
 \psfrag{v}{$v$}\psfrag{x00}{$x_{0,0}$} \psfrag{x01}{$x_{0,1}$}\psfrag{x11}{$x_{1,1}$}
 \psfrag{x10}{$x_{1,0}$} \psfrag{z00}{$z_{0,0}$} \psfrag{z01}{$z_{0,1}$}
 \psfrag{z11}{$z_{1,1}$} \psfrag{z10}{$z_{1,0}$}
 \psfrag{u1}{$u_1$} \psfrag{u2}{$u_2$} \psfrag{w1}{$w_1$} \psfrag{w2}{$w_2$}
 \psfrag{r1}{$r_1$} \psfrag{r2}{$r_2$} \psfrag{t1}{$t_1$} \psfrag{t2}{$t_2$}
 \psfrag{p1}{$p_1$} \psfrag{p2}{$p_2$} \psfrag{q1}{$q_1$} \psfrag{q2}{$q_2$}
 \psfrag{s1}{$s_1$} \psfrag{s2}{$s_2$} 
\centerline{\includegraphics[scale=0.48]{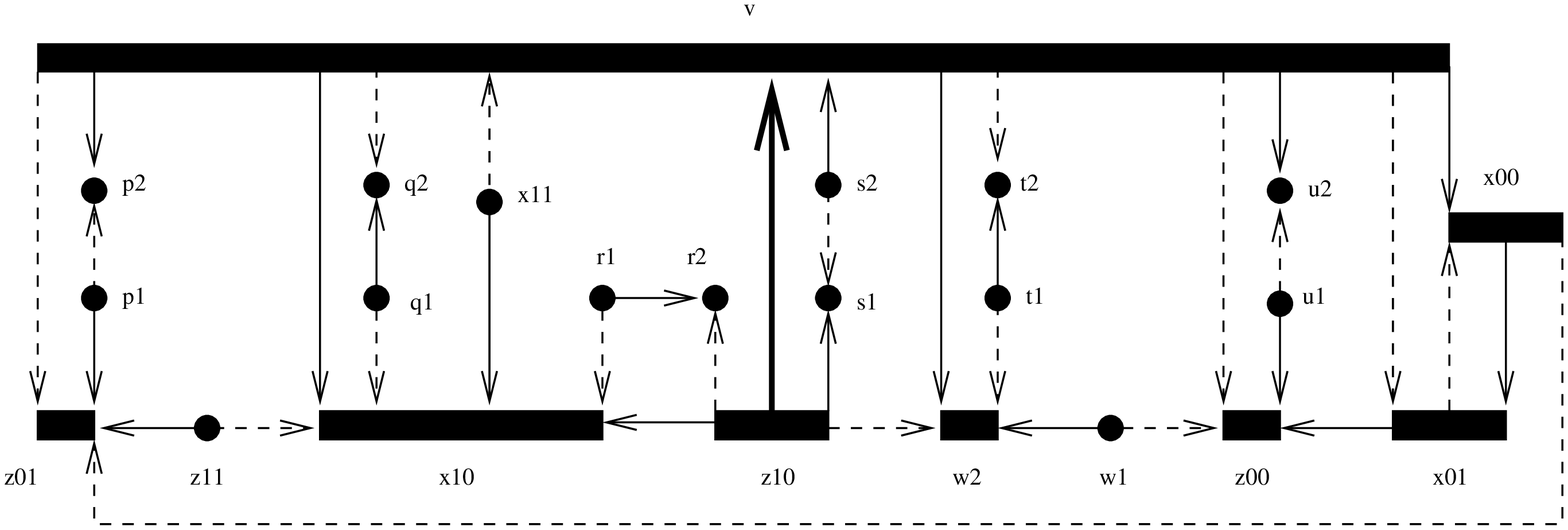}}
\caption{The symmetric difference of $Z_1$ and $G'$, together with the
single interesting arc (the eccentric arc), after the eccentric switch.}
\label{example2}
\end{figure}
\end{center}
The arcs $(z_{1,0},x_{1,1}), (x_{1,0},x_{1,1}), (x_{1,0},v)$ have 
disappeared because they have now been switched to agree with $G'$.
They play no further part in the formation of the canonical path.

Next, we must process the normal 2-circuit
\[ S' = 
 v x_{0,0} x_{0,1} z_{0,0} w_1 w_2 z_{1,0} v x_{1,1} x_{1,0} z_{1,1} z_{0,1} x_{0,0} x_{0,1}
\]
which the eccentric switch has produced (see Figure~\ref{eccentric-detail}).
From Lemma~\ref{eccentric-plus-shortcut} we know that the shortcut arc is 
$(z_{1,0},x_{1,0})$, and again $(i,h)=(0,0)$.
Now $(z_{1,0},x_{1,0})\in A(Z_1)$ so $S'$ falls into case (Nc), and we
will perform the shortcut switch last.  Our next task is to process the
1-circuit
\[ S_1 = 
 v x_{0,0} x_{0,1} z_{0,0} w_1 w_2 z_{1,0} x_{1,0} z_{1,1} z_{0,1} 
                x_{0,0} x_{0,1}.
\]
The set $\mathcal{B}$ of end-vertices of odd chords which are absent in $Z_1$
is $\mathcal{B} = \{ x_{0,1}\, z_{0,1},\, z_{0,0}\}$.
Now $z_{0,1}x_{0,0}x_{0,1}$ is a contiguous substring of $S$, so these 
vertices are all distinct, and hence $\mathcal{B}$ has three elements. 
Thus there will be three phases in the processing of $S_1$.
The first phase is over after just one switch, namely 
$[v x_{0,0} x_{0,1} z_{0,0}]$.
This produces the next digraph
$Z_2$ on the canonical path: see Figure~\ref{example3}.
\begin{center}
\begin{figure}[ht]
 \psfrag{v}{$v$}\psfrag{x00}{$x_{0,0}$} \psfrag{x01}{$x_{0,1}$}\psfrag{x11}{$x_{1,1}$}
 \psfrag{x10}{$x_{1,0}$} \psfrag{z00}{$z_{0,0}$} \psfrag{z01}{$z_{0,1}$}
 \psfrag{z11}{$z_{1,1}$} \psfrag{z10}{$z_{1,0}$}
 \psfrag{u1}{$u_1$} \psfrag{u2}{$u_2$} \psfrag{w1}{$w_1$} \psfrag{w2}{$w_2$}
 \psfrag{r1}{$r_1$} \psfrag{r2}{$r_2$} \psfrag{t1}{$t_1$} \psfrag{t2}{$t_2$}
 \psfrag{p1}{$p_1$} \psfrag{p2}{$p_2$} \psfrag{q1}{$q_1$} \psfrag{q2}{$q_2$}
 \psfrag{s1}{$s_1$} \psfrag{s2}{$s_2$} 
\centerline{\includegraphics[scale=0.5]{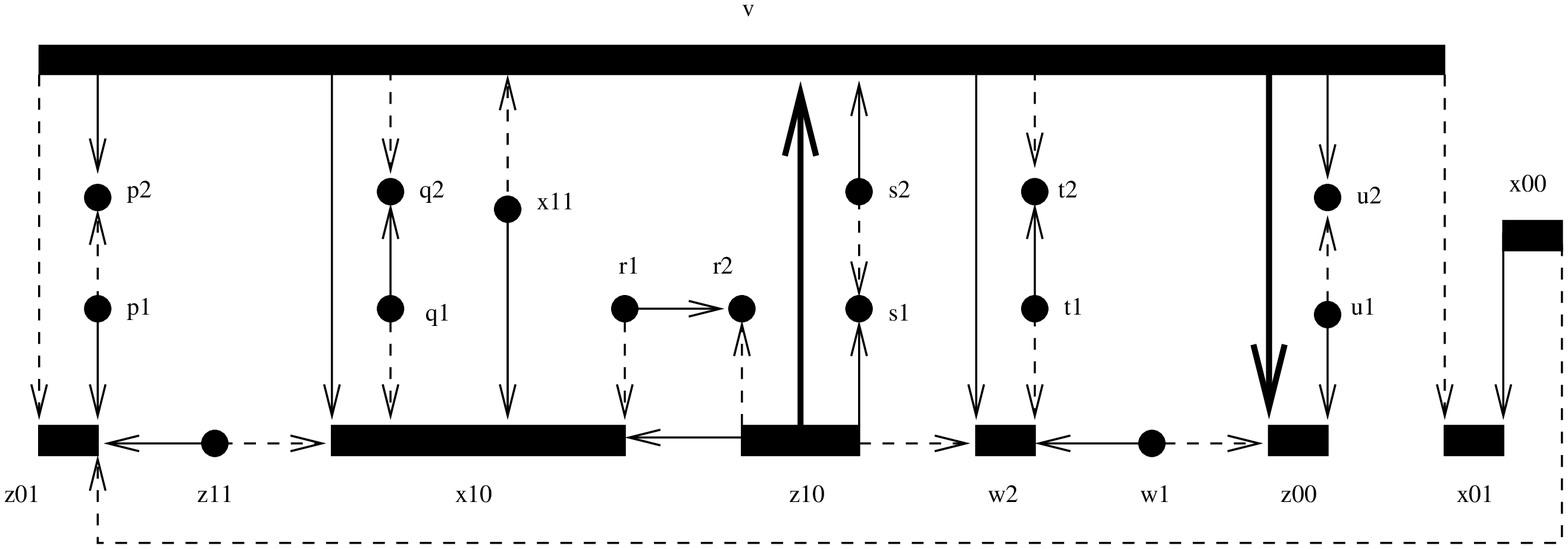}}
\caption{The symmetric difference of $Z_2$ and $G'$,
together with the two interesting arcs (the eccentric arc and one odd chord),
after Phase 1.}
\label{example3}
\end{figure}
\end{center}
\vspace*{-2\baselineskip}
The odd chord $(v,z_{0,0})$ has become an interesting arc, so it is included
in Figure~\ref{example3} together with the eccentric arc.  Both belong to
$Z_2\cap G'$, and hence they are depicted by a thick unbroken arc.
The arcs $(v,x_{0,0})$, $(x_{0,1},x_{0,0})$, $(x_{0,1},z_{0,0})$ have now
been switched to agree with $G'$, so they play no further role.  Hence
we have omitted these arcs from Figure~\ref{example3}. 

We now start Phase 2 with the switch $[v x_{1,0} z_{1,1} z_{0,1}]$,
producing the next digraph $Z_3$ on the canonical path.  
See Figure~\ref{example4}.
Note that there are four interesting arcs in $Z_3$, namely three odd chords
and the eccentric arc. The vertex $z_{1,1}$ is omitted
from Figure~\ref{example4} since it has degree zero in the symmetric
difference of $Z_3$ and $G'$.
(We will make no further comments on the inclusion of interesting arcs or the
omission of isolated vertices for the remaining figures.)
\begin{center}
\begin{figure}[ht]
 \psfrag{v}{$v$}\psfrag{x00}{$x_{0,0}$} \psfrag{x01}{$x_{0,1}$}\psfrag{x11}{$x_{1,1}$}
 \psfrag{x10}{$x_{1,0}$} \psfrag{z00}{$z_{0,0}$} \psfrag{z01}{$z_{0,1}$}
 \psfrag{z11}{$z_{1,1}$} \psfrag{z10}{$z_{1,0}$}
 \psfrag{u1}{$u_1$} \psfrag{u2}{$u_2$} \psfrag{w1}{$w_1$} \psfrag{w2}{$w_2$}
 \psfrag{r1}{$r_1$} \psfrag{r2}{$r_2$} \psfrag{t1}{$t_1$} \psfrag{t2}{$t_2$}
 \psfrag{p1}{$p_1$} \psfrag{p2}{$p_2$} \psfrag{q1}{$q_1$} \psfrag{q2}{$q_2$}
 \psfrag{s1}{$s_1$} \psfrag{s2}{$s_2$} 
\centerline{\includegraphics[scale=0.5]{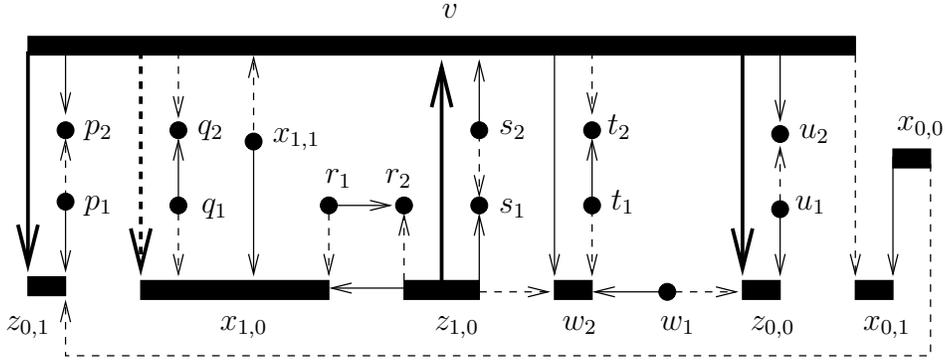}}
\caption{The symmetric difference of $Z_3$ and $G'$,
together with the four interesting arcs (the eccentric arc and three 
odd chords), after the first step of Phase 2.}
\label{example4}
\end{figure}
\end{center}
\vspace*{-2\baselineskip}
The next step in Phase 2 is the switch $[v w_2 z_{1,0} x_{1,0}]$,
which involves the shortcut arc.  This produces the digraph $Z_4$ on
the canonical path.  See Figure~\ref{example5}.
Note that $Z_4$ has five interesting arcs, namely 
\[ (z_{1,0},v),\,\, (z_{1,0},x_{1,0}),\,\, (v,z_{0,1}),\,\, (v,w_2),\,\, 
(v,z_{0,0}).
\]
This is the maximum possible, by Lemma~\ref{zoo}.    
Later we will show that $Z_4$ also has the maximum number of bad
pairs. 
\begin{center}
\begin{figure}[ht]
 \psfrag{v}{$v$}\psfrag{x00}{$x_{0,0}$} \psfrag{x01}{$x_{0,1}$}\psfrag{x11}{$x_{1,1}$}
 \psfrag{x10}{$x_{1,0}$} \psfrag{z00}{$z_{0,0}$} \psfrag{z01}{$z_{0,1}$}
 \psfrag{z11}{$z_{1,1}$} \psfrag{z10}{$z_{1,0}$}
 \psfrag{u1}{$u_1$} \psfrag{u2}{$u_2$} \psfrag{w1}{$w_1$} \psfrag{w2}{$w_2$}
 \psfrag{r1}{$r_1$} \psfrag{r2}{$r_2$} \psfrag{t1}{$t_1$} \psfrag{t2}{$t_2$}
 \psfrag{p1}{$p_1$} \psfrag{p2}{$p_2$} \psfrag{q1}{$q_1$} \psfrag{q2}{$q_2$}
 \psfrag{s1}{$s_1$} \psfrag{s2}{$s_2$} 
\centerline{\includegraphics[scale=0.5]{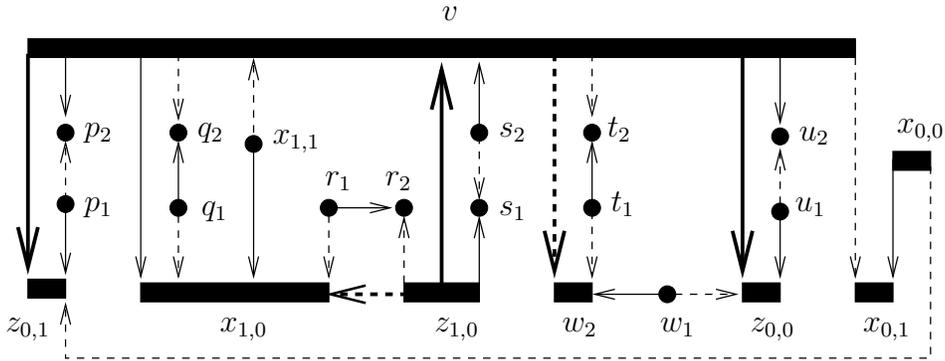}}
\caption{The symmetric difference of $Z_4$ and $G'$ 
together with the five interesting arcs (the eccentric arc, the shortcut arc 
and three odd chords), after the second step of Phase 2.}
\label{example5}
\end{figure}
\end{center}
\vspace*{-2\baselineskip}
The final step in Phase 2 is the switch $[vz_{0,0}w_1 w_2]$,
producing the digraph $Z_5$.
See Figure~\ref{example6}.  Now only one odd chord is interesting, as two
have been restored to their original state.
\begin{center}
\begin{figure}[ht]
 \psfrag{v}{$v$}\psfrag{x00}{$x_{0,0}$} \psfrag{x01}{$x_{0,1}$}\psfrag{x11}{$x_{1,1}$}
 \psfrag{x10}{$x_{1,0}$} \psfrag{z00}{$z_{0,0}$} \psfrag{z01}{$z_{0,1}$}
 \psfrag{z11}{$z_{1,1}$} \psfrag{z10}{$z_{1,0}$}
 \psfrag{u1}{$u_1$} \psfrag{u2}{$u_2$} \psfrag{w1}{$w_1$} \psfrag{w2}{$w_2$}
 \psfrag{r1}{$r_1$} \psfrag{r2}{$r_2$} \psfrag{t1}{$t_1$} \psfrag{t2}{$t_2$}
 \psfrag{p1}{$p_1$} \psfrag{p2}{$p_2$} \psfrag{q1}{$q_1$} \psfrag{q2}{$q_2$}
 \psfrag{s1}{$s_1$} \psfrag{s2}{$s_2$} 
\centerline{\includegraphics[scale=0.5]{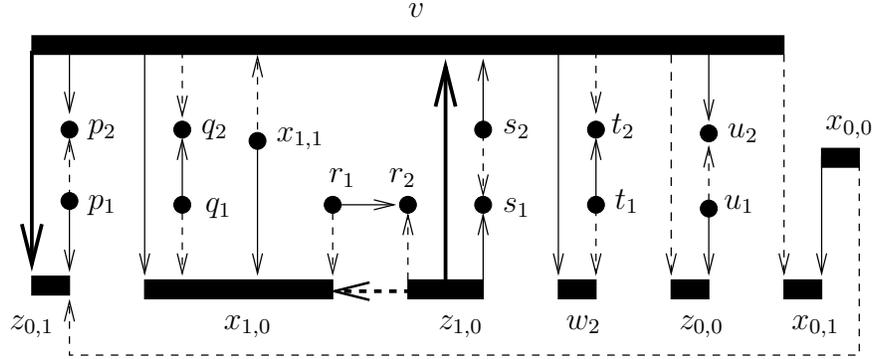}}
\caption{The symmetric difference of $Z_5$ and $G'$,
together with the three interesting arcs (the eccentric arc, the shortcut arc 
and one odd chord), after Phase 2.}
\label{example6}
\end{figure}
\end{center}
\vspace*{-2\baselineskip}
Then we perform Phase 3, which consists of one step: the switch
$[vz_{0,1}x_{0,0} x_{0,1}]$.  This produces the digraph $Z_6$ which has
no interesting odd chords, but still has two interesting arcs, namely the
 eccentric arc and shortcut arc.  See Figure~\ref{example7}.
\begin{center}
\begin{figure}[ht!]
 \psfrag{v}{$v$}\psfrag{x00}{$x_{0,0}$} \psfrag{x01}{$x_{0,1}$}\psfrag{x11}{$x_{1,1}$}
 \psfrag{x10}{$x_{1,0}$} \psfrag{z00}{$z_{0,0}$} \psfrag{z01}{$z_{0,1}$}
 \psfrag{z11}{$z_{1,1}$} \psfrag{z10}{$z_{1,0}$}
 \psfrag{u1}{$u_1$} \psfrag{u2}{$u_2$} \psfrag{w1}{$w_1$} \psfrag{w2}{$w_2$}
 \psfrag{r1}{$r_1$} \psfrag{r2}{$r_2$} \psfrag{t1}{$t_1$} \psfrag{t2}{$t_2$}
 \psfrag{p1}{$p_1$} \psfrag{p2}{$p_2$} \psfrag{q1}{$q_1$} \psfrag{q2}{$q_2$}
 \psfrag{s1}{$s_1$} \psfrag{s2}{$s_2$} 
\centerline{\includegraphics[scale=0.5]{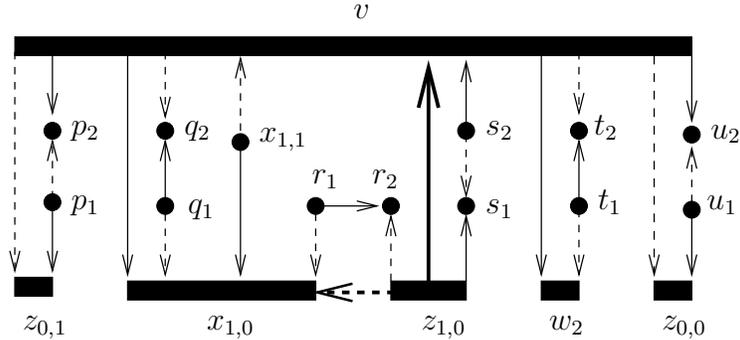}}
\caption{The symmetric difference of $Z_6$ and $G'$ together with
the two eccentric arcs (the eccentric arc and the shortcut arc),
after Phase 3: the processing of the 1-circuit is complete.}
\label{example7}
\end{figure}
\end{center}
\vspace*{-2\baselineskip}
This completes the processing of the 1-circuit $S_1$.  To complete the 
processing of the normal 2-circuit $S'$ we must perform the shortcut switch
$[x_{1,1} x_{1,0} z_{1,0} v]$.  This produces the digraph $Z_7$ as in
Figure~\ref{example8}, with no interesting arcs.
\begin{center}
\begin{figure}[ht!]
 \psfrag{v}{$v$}\psfrag{x00}{$x_{0,0}$} \psfrag{x01}{$x_{0,1}$}\psfrag{x11}{$x_{1,1}$}
 \psfrag{x10}{$x_{1,0}$} \psfrag{z00}{$z_{0,0}$} \psfrag{z01}{$z_{0,1}$}
 \psfrag{z11}{$z_{1,1}$} \psfrag{z10}{$z_{1,0}$}
 \psfrag{u1}{$u_1$} \psfrag{u2}{$u_2$} \psfrag{w1}{$w_1$} \psfrag{w2}{$w_2$}
 \psfrag{r1}{$r_1$} \psfrag{r2}{$r_2$} \psfrag{t1}{$t_1$} \psfrag{t2}{$t_2$}
 \psfrag{p1}{$p_1$} \psfrag{p2}{$p_2$} \psfrag{q1}{$q_1$} \psfrag{q2}{$q_2$}
 \psfrag{s1}{$s_1$} \psfrag{s2}{$s_2$} 
\centerline{\includegraphics[scale=0.5]{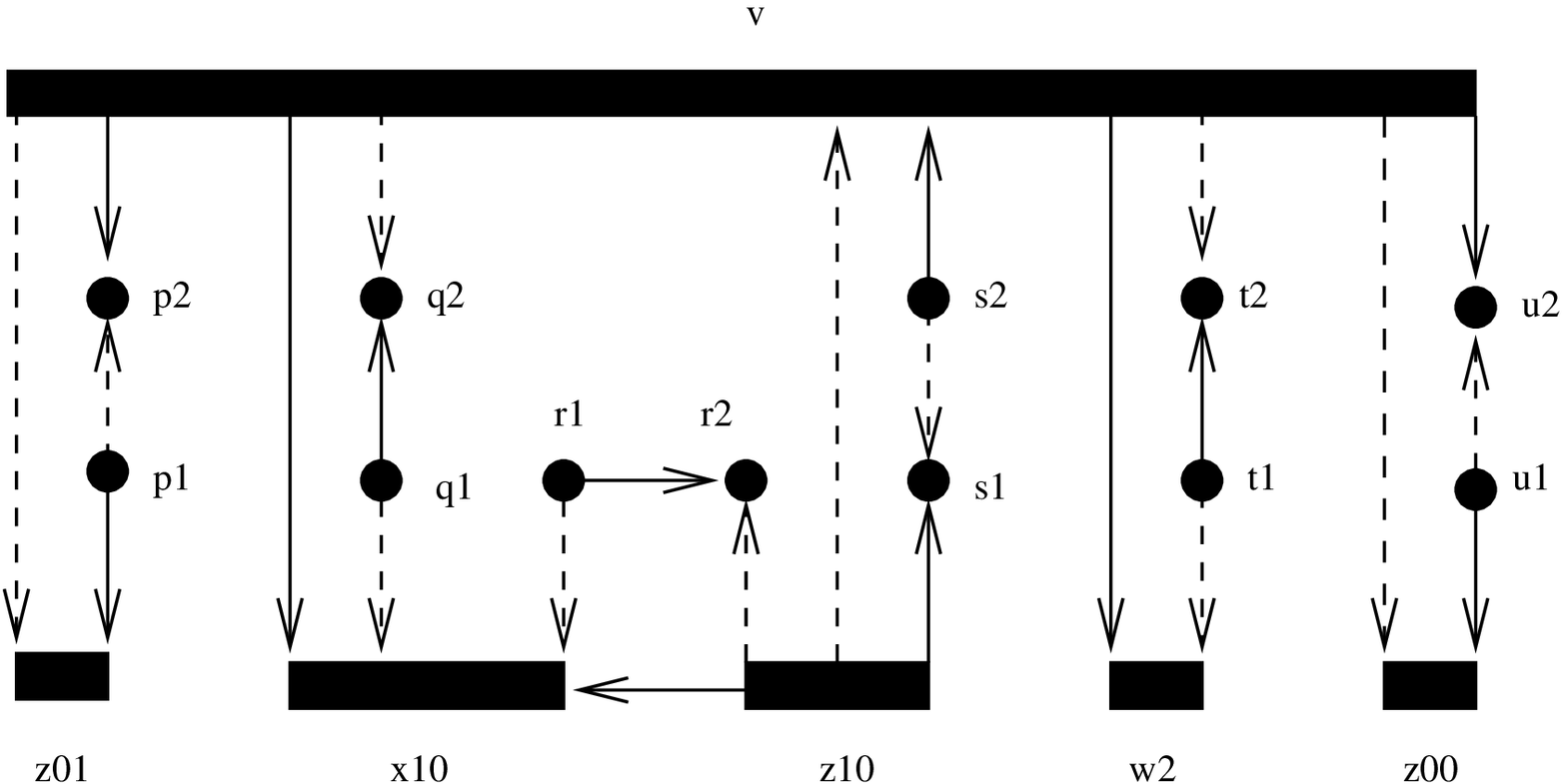}}
\caption{The symmetric difference of $Z_7$ and $G'$.}
\label{example8}
\end{figure}
\end{center}
This completes the processing of the normal 2-circuit $S'$, and hence it also
completes the processing of the eccentric 2-circuit $S$.

It remains to process the remaining circuits in the given order.
Each remaining circuit is an alternating 4-cycle, which
is processed by a single switch, removing it from the symmetric difference. 
This gives 6 more switches, specifically
\[ [v p_2 p_1 z_{0,1}], \,\, [v x_{1,0} q_1 q_2],\,\, 
  [z_{1,0} x_{1,0} r_1 r_2],\,\, 
   [z_{1,0} s_1 s_2 v],\,\,
  [v w_2 t_1 t_2],\,\, [v u_2 u_1 z_{0,0}].
\]
The switches are performed in this order,
producing digraphs $Z_8,\ldots, Z_{13}$ where $Z_{13}=G'$.  This
completes the construction of
the canonical path $\gamma_\psi(G,G')$ from $G$ to $G'$ corresponding to
$\psi$.  

\bigskip

Now let us return to the digraph $Z_4$.  We now show that there are
16 bad pairs in $Z_4$ with respect to $\psi$.
We redraw $H$
in Figure~\ref{example9}, where now solid lines show arcs in $H\cap Z_4$ 
and dashed lines show arcs in $H - Z_4$.   Hence solid and dashed
arcs play the role of green and yellow arcs, in the terminology of
Lemma~\ref{load}.
Interesting arcs are still shown with thicker lines.
\begin{center}
\begin{figure}[ht!]
 \psfrag{v}{$v$}\psfrag{x00}{$x_{0,0}$} \psfrag{x01}{$x_{0,1}$}\psfrag{x11}{$x_{1,1}$}
 \psfrag{x10}{$x_{1,0}$} \psfrag{z00}{$z_{0,0}$} \psfrag{z01}{$z_{0,1}$}
 \psfrag{z11}{$z_{1,1}$} \psfrag{z10}{$z_{1,0}$}
 \psfrag{u1}{$u_1$} \psfrag{u2}{$u_2$} \psfrag{w1}{$w_1$} \psfrag{w2}{$w_2$}
 \psfrag{r1}{$r_1$} \psfrag{r2}{$r_2$} \psfrag{t1}{$t_1$} \psfrag{t2}{$t_2$}
 \psfrag{p1}{$p_1$} \psfrag{p2}{$p_2$} \psfrag{q1}{$q_1$} \psfrag{q2}{$q_2$}
 \psfrag{s1}{$s_1$} \psfrag{s2}{$s_2$} 
\centerline{\includegraphics[scale=0.5]{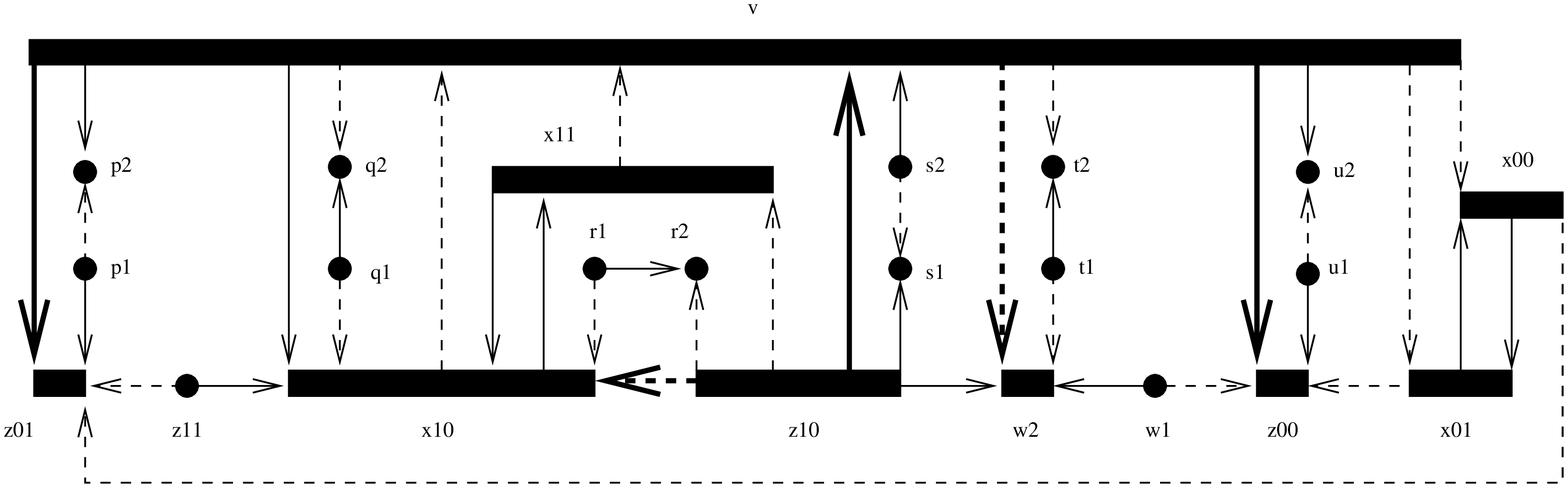}}
\caption{The symmetric difference $H$, where now solid lines are arcs in 
$H\cap Z_4$ and dashed lines are arcs in $H - Z_4$.}
\label{example9}
\end{figure}
\end{center}
\vspace*{-2\baselineskip}

By tracing around this figure using the circuits given in (\ref{circuits})
determined by the pairing $\psi$, we find that there are 16 bad pairs in
$Z_4$ with respect to $\psi$.  This is the maximum possible number
of bad pairs, as proved in Lemma~\ref{load}.  Table~\ref{badpairs}
shows the bad vertices and the bad pairs of arcs incident with each one.
\begin{table}[ht]
\renewcommand{\arraystretch}{1.5}
\begin{center}
\begin{tabular}{|r|l|l|}
\hline
bad vertex & bad green pairs & bad yellow pairs \\
\hline
$v$ & $(z_{1,0},v),\  (s_2,v)$ & $(x_{1,1},v),\  (x_{1,0},v)$ \\
     & $(v,z_{0,0}),\  (v,u_2)$ & $(v,x_{0,0}),\  (v,x_{0,1})$ \\
     &  $(v,z_{0,1}),\  (v,p_2)$ & $(v,w_2),\  (v,t_2)$\\
\hline
$z_{1,0}$ & $(z_{1,0},v), \ (z_{1,0},s_1)$ &  
                 $(z_{1,0},x_{1,0}),\ (z_{1,0},r_2)$ \\
\hline
$x_{1,0}$ &  $(x_{1,1},x_{1,0}),\ (z_{1,1},x_{1,0})$ & 
             $(z_{1,0},x_{1,0}),\ (r_1,x_{1,0})$   \\
\hline
$z_{0,0}$ &  $(v,z_{0,0}),\ (u_1,z_{0,0})$ & 
             $(x_{0,1},z_{0,0}),\ (w_1,z_{0,0})$   \\
\hline
$z_{0,1}$ &  $(v,z_{0,1}),\ (p_1,z_{0,1})$ 
          &  $(z_{1,1},z_{0,1}),\ (x_{0,0},z_{0,1})$   \\
\hline
$w_2$     &  $(w_1,w_2),\ (z_{1,0},w_2)$ &  
             $(v,w_2),\ (t_1,w_2)$   \\
\hline
\end{tabular}
\caption{The bad vertices and bad pairs of arcs in $Z_4$ with respect to
$\psi$.}
\label{badpairs}
\end{center}
\end{table}

We now make two final comments.
\begin{enumerate}
\item[(i)] In this relatively small example, not many coincidences between the
bad vertices are possible.  For instance, we know that $w_2\neq z_{0,0}$ since 
$z_{0,0}w_1w_2$ is a contiguous substring of $S$, while $w_2\neq x_{1,0}$ since
$(v,w_2)$ is a blue arc in $H$ and $(v,x_{1,0})$ is a red arc in $H$.
In our example, the only coincidences that may occur are that $z_{1,0}$ may equal 
$z_{0,0}$
or it may equal $z_{0,1}$.  If either holds then 
the vertex $z_{0,0}$ is incident with four bad pairs in $Z_4$, one of each colour and
orientation.  
\item[(ii)] This example was constructed to produce a digraph with the maximum
number of bad pairs (namely $Z_4$, with 16 bad pairs).  This was
achieved by letting the interesting arcs all belong to $H$, so that they
did not become bad arcs when they became interesting, but instead they
created extra bad pairs with respect to $\psi$.
If instead $H$ just
consisted of the arcs of the eccentric 2-circuit $S$, then
any interesting arc would also be a bad arc.
(For example, if the eccentric arc had not been
an arc of $H$ but was absent in both $G$ and $G'$ then in $Z_1$ it would 
become a bad arc with label $-1$.)
Then the analogue of $Z_4$ would be an example of a digraph 
with the maximum number of bad arcs. 
\end{enumerate}

\newpage

\end{document}